\theoremstyle{definition}
\def\fnum{equation}
\newtheorem{Thm}[\fnum]{Theorem}
\newtheorem{Cor}[\fnum]{Corollary}
\newtheorem{Lem}[\fnum]{Lemma}
\newtheorem{Def}[\fnum]{Definition}
\newtheorem{Pro}[\fnum]{Proposition}
\numberwithin{equation}{section}
\newcommand{\Ric}{{\text{Ric}}}
\newcommand{\Tr}{{\text{Tr}}}
\newcommand{\Hess}{{\text {Hess}}}
\def\RR{{\bold R}}
\def\SS{{\bold S}}
\newcommand{\dv}{{\text {div}}}
\newcommand{\e}{{\text {e}}}
\newcommand{\cC}{{\mathcal{C}}}
\newcommand{\cg}{{\mathfrak{g}}}
\newcommand{\cF}{{\mathcal{F}}}
\newcommand{\cL}{{\mathcal{L}}}
\newcommand{\cP}{{\mathcal{P}}}
\newcommand{\cS}{{\mathcal{S}}}
\newcommand{\cR}{{\mathcal{R}}}
\newcommand{\cV}{{\mathcal{V}}}
\newcommand{\eqr}[1]{(\ref{#1})}
\def\bH{{\bold H}}
\def\bN{{\bold N}}
\title[Regularity of  elliptic and parabolic systems]{Regularity of elliptic and parabolic systems}
\author[]{Tobias Holck Colding}%
\address{MIT, Dept. of Math.\\
77 Massachusetts Avenue, Cambridge, MA 02139-4307.}
\author[]{William P. Minicozzi II}%
\thanks{The  authors
were partially supported by NSF Grants DMS 1812142 and DMS 1707270.}
\email{colding@math.mit.edu  and minicozz@math.mit.edu}
\begin{document}

\maketitle

\begin{abstract}
We show   uniqueness of cylindrical blowups for mean curvature flow in all dimension and all codimension.  Cylindrical singularities are known to be the most important; they are the most prevalent in any codimension.  
Mean curvature flow in higher codimension is a nonlinear parabolic system where many of the methods used for hypersurfaces do not apply and uniqueness of cylindrical blowups remained a major open problem.  
Our results imply regularity of the singular set for the system.  
\end{abstract}

\section{Introduction}

We show that  blowups (tangent flows) are unique at each cylindrical singularity of  a mean curvature flow (MCF)  in arbitrary codimension.  Uniqueness means that  blowups are  independent of the choice of rescalings as one magnifies around the singularity.  This  is one of the most fundamental issues about singularities, it has been studied in many geometric problems,  and it has major implications for regularity.  

There had been two general methods for proving uniqueness, going back to Allard-Almgren \cite{AA} and   Simon \cite{S}, and most uniqueness results use one of these.  In both \cite{AA} and \cite{S}, the singularity  is compact 
 and this is essential in the argument.  However, for MCF, as well as many other important problems, the most important singularities are non-compact.  If a flow starting at a closed  submanifold has a non-compact singularity, the evolving MCF is never a graph over the entire non-compact limit.  This makes it difficult to linearize the problem, or even to make sense of a neighborhood of the limit,  and creates other serious issues.
   A singular point is cylindrical if at least one tangent flow is a multiplicity one cylinder $\SS^k_{\sqrt{2k}} \times \RR^{n-k}$.    These are the most important singularities and
  neither of the two general methods in \cite{AA} and \cite{S} applies in this setting.  Uniqueness for cylinders was a major open problem for decades.   For hypersurfaces, this problem was solved in \cite{CM3}, but the approach relied heavily on it being a hypersurface and the general case for systems remained a major open problem.  
In this  paper, we prove the general uniqueness theorem in arbitrary  codimension.

\begin{Thm}	\label{t:main}
Let $M_t^n$ be a MCF in $\RR^N$.  At each cylindrical singular point the tangent flow is unique.  That is, any other tangent flow is also a cylinder with the same $\RR^k$ factor that points in the same direction.
\end{Thm}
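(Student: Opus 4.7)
The plan is to adapt the Lojasiewicz--Simon strategy developed for hypersurfaces in \cite{CM3} to the system setting. After parabolic rescaling around a cylindrical singular point, the MCF becomes Huisken's rescaled flow $\Sigma_\tau$, which is the (negative) gradient flow of the $F$-functional (Gaussian weighted area) on submanifolds of $\RR^N$. The cylinder $\cC=\SS^k_{\sqrt{2k}}\times\RR^{n-k}$ is a critical point of $F$, and uniqueness of the tangent flow at the singular point is equivalent to convergence of $\Sigma_\tau$ to a single translate/rotate of $\cC$ as $\tau\to\infty$; the standard reduction turns this into showing that $\tau\mapsto\Sigma_\tau$ has finite length in a suitable $L^2$ sense.

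First I would develop the linear theory on the normal bundle $N\cC$. The Jacobi/stability operator $L$ for $F$ acts on sections of $N\cC$ and, using the product structure, decomposes via spherical harmonics on $\SS^k$ tensored with Hermite functions on $\RR^{n-k}$. In higher codimension $N\cC$ splits into the one-dimensional radial direction (spanning $\cC$ together with its $(k{+}1)$-dimensional sphere-containing subspace) and $N-n-1$ flat directions. I would diagonalize $L$ on each summand, isolate the unstable/neutral eigensections, and identify the finite-dimensional kernel with translations, rotations of the $\RR^{n-k}$-axis, and dilations.

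Next I would establish a Lojasiewicz--Simon inequality of the form
\begin{equation}
|F(M)-F(\cC)|^{1-\theta}\le C\,\|\nabla F(M)\|,
\end{equation}
for $M$ a normal graph of a small vector-valued section over a large compact piece $\SS^k\times B_R\subset\cC$, up to errors from the non-compact tail. Since $\cC$ is non-compact, this cannot be deduced from a direct Simon-style compactness argument; instead one has to couple improved interior estimates on a growing neck with shrinker identities to absorb the contribution from $|x|\gtrsim R$. Integrating $\frac{d}{d\tau}F(\Sigma_\tau)=-\|\nabla F(\Sigma_\tau)\|^2$ against the Lojasiewicz inequality then gives finite length, hence a unique limit, and a final argument identifies the limit as a cylinder whose $\RR^{n-k}$-factor points in a fixed direction by bounding the total rotation of the axis.

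The main obstacle is the combination of non-compactness with the vector-valued nature of the system. In the hypersurface case one could select a preferred unit normal and reduce the problem to a scalar equation, which made monotonicity of one-sided quantities and pointwise comparison available; in higher codimension no such reduction exists. The Lojasiewicz inequality must be proved for sections of a rank $N-n$ bundle, and the gauge in the normal bundle of the evolving $\Sigma_\tau$ must be fixed (for example by a parallel or heat-flow gauge) so that it can be compared with a fixed frame on $\cC$ uniformly on large necks. Controlling this gauge together with the tail contributions over a non-compact limit is where I expect the bulk of the work to lie; once done, the remaining argument follows the by now standard Lojasiewicz ODE scheme.
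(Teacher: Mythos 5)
Your outline reproduces the correct high-level architecture from \cite{CM3} --- pass to rescaled MCF as the negative gradient flow of the Gaussian area $F$, diagonalize the Jacobi operator $L$ on the normal bundle of the cylinder, prove a Lojasiewicz inequality $|F(\Sigma) - F(\cC)|^{1+\tau}\lesssim\|\nabla F\|_{L^2}^{2}$ near the cylinder, and integrate to get finite length --- and that much coincides with the paper. But the step you summarize as ``couple improved interior estimates on a growing neck with shrinker identities to absorb the contribution from $|x|\gtrsim R$'' is exactly where the hypersurface argument breaks. In \cite{CM3}, the improvement step on the compact part was obtained from a quantitative version of the classification of mean-convex hypersurface shrinkers, which is a Bernstein-type theorem. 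No such classification exists in higher codimension --- just as there is no Bernstein theorem for higher-codimension stable minimal submanifolds --- so your proposal has no replacement for this ingredient. Fixing a gauge in the normal bundle, which you single out as the principal obstacle, is a secondary technical concern and is not where the argument fails; the real gap is the absence of any mechanism to convert smallness of $\nabla F$ into closeness to a cylinder.

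The paper fills that gap with three ideas that do not appear in your outline. First, it works with the tensor $\tau = A/|\bH|$, which is parallel on cylinders, and proves a drift Simons identity for $\tau$ (Theorem \ref{c:kappa}) whose integration yields a bound on $\|\nabla\tau\|_{L^2}$ --- but only up to a new curvature quantity $P$ (defined in \eqr{e:definePhere}) that vanishes identically for hypersurfaces and has no sign in general. Second, a quantitative almost-rigidity for cylinders (Proposition \ref{p:Lojae1A}) shows that smallness of $\nabla\tau$ and $\phi$ on a ball forces graphicality over a nearby cylinder at a comparable scale, which stands in for the missing classification. Third, and most delicately, Proposition \ref{l:PLone} bounds $\|P\|_{L^1}$ strictly better than quadratically in $\|U\|_{L^2}$: since the first variation of $P$ at the cylinder vanishes (Lemma \ref{l:Pprime}) but the Hessian does not, one must show that the Hessian of $P$ vanishes precisely on the Jacobi-field directions (Corollary \ref{c:2prime}) and then split $U$ into its Jacobi part $J$ and the complement $h$, which is controlled by the gradient. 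Without $\tau$, $P$, and this second-order Taylor expansion anchored on the Jacobi decomposition, the Lojasiewicz inequality you posit cannot be proved, so the proposal as written would not close.
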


Cylinders are the most significant singularities.  They are the most prevalent in any codimension.  By White's parabolic Almgren-Federer dimension reduction, \cite{W3},  the singular set is stratified into subsets whose Hausdorff dimension is the dimension of translation invariance of the blowup.  Thus, for multiplicity one flows, the most prevalent    singularities are points where the blowup is $\gamma\times \RR^{n-1}$ for a shrinking curve $\gamma$.
By uniqueness of solutions to ODEs,    $\gamma$ is a closed planar  Abresch-Langer curve that is a round circle if embedded or stable, \cite{CM1}, or with low entropy $\lambda (\gamma) < 2$.

  The  new approach   in
  \cite{CM3} to prove uniqueness   in codimension one was divided into two main steps -  extension and improvement - applied iteratively.
  The iteration scheme in \cite{CM3} showed that 
   the flow was converging to the limit on ever larger sets as time evolves,
  giving a way to deal with non-compact singularities.  In principle, this scheme can apply to a  variety of problems with non-compact limits, as long as one can establish extension and improvement  estimates.
  The extension was quite general, relying largely on monotonicity and Allard/Brakke methods that hold in all codimension.  The improvement step was much more complicated,  quite delicate, and relied on a quantitative form of the classification of hypersurface shrinkers with positive mean curvature.  This classification is a type of Bernstein theorem for shrinkers.   No such classification holds in higher codimension, just as there is  no Bernstein theorem for minimal submanifolds in higher codimension.  Entirely new ideas are needed to attack uniqueness for cylindrical blowups in higher codimension.  We do this here.  Some of the ideas developed  here play an important role in Ricci flow, \cite{CM9}.

Huisken's seminal paper \cite{H1} gave the first uniqueness of blowups for MCF, showing uniqueness of spherical singularities for hypersurfaces.     Uniqueness for spheres in higher codimension came much later in the work of
 Andrews-Baker, \cite{AB}.   
 Schulze   proved  uniqueness for closed singularities in all codimension, \cite{Sc}, using the method of \cite{S}, which applies to systems if the singularity is compact.   The first general result in the non-compact setting was \cite{CM3} (cf. \cite{GK}
when the flow is a  graph over the entire cylinder);  this relied heavily on properties of hypersurfaces.     In a   recent paper, Chodosh and Schulze, \cite{ChS}, 
  proved uniqueness for asymptotically conical shrinkers.

   \subsection{Key difficulties in higher codimension}

It is well-known that   higher codimension for MCF creates many new challenges as it does for minimal varieties, \cite{A}, \cite{D}.   There are many phenomena for hypersurfaces that simply do not extend to higher codimension. 
New challenges include the vector-valued second fundamental form,   
a   new curvature term $P$ that   causes major difficulties, and the lack of a maximum principle.
Moreover, the second fundamental form evolves by a reaction-diffusion system in which the reaction terms are  very complicated, while  they are quite easily understood for hypersurfaces.

The estimates that we use to prove  uniqueness  involve both a Simons type identity and  the linearized operator which involves  terms that arise in the second variation for minimal submanifolds.  These terms do not have a sign in higher codimension, which is one of the reasons why there is no higher codimension stable Bernstein theorem in the minimal case.  In our case, these terms give a new curvature term $P$ defined in \eqr {e:definePhere}
 that vanishes for hypersurfaces but does not have a sign in general.
Controlling  $P$ is a crucial and  delicate point that did not arise for hypersurfaces,  cannot be handled at the linear level, and can only be controlled in certain directions.
 Dealing with  $P$ is one of the key points in the paper.   
 
In addition to understanding $P$, there are several other   interesting new phenomena in higher codimension.   For instance,  the shrinker equation leads most naturally to controlling the second fundamental form in the direction of the mean curvature vector.  For hypersurfaces, there is only one normal direction so this is sufficient.  In higher codimension, we have much less control which presents new challenges that
are dealt with in Section \ref{s:class}.
 
  \subsection{Applications}
  
  Uniqueness of blowups has strong implications for the   structure of flows and the regularity of the  set $\cS$ where the flow becomes singular.   In codimension one, uniqueness was a crucial ingredient in sharp regularity of $\cS$, \cite{CM7}.    
Combining Theorem \ref{t:main} with the ideas introduced in \cite{CM7}, we extend this to higher codimension\footnote{The main theorem of \cite{CM7} states the submanifolds are Lipschitz, but, as noted in \cite{CM6}, the proof shows that they are $C^1$ submanifolds.}:   

\begin{Cor}   \label{c:cmain}
 Let $M_t^n \subset \RR^N$ be a MCF with only cylindrical singularities, then the space-time singular set $\cS$ satisfies:
\begin{itemize}
\item $\cS$ is contained in finitely many (compact) embedded $C^1$ submanifolds each of dimension at most $(n - 1)$ together with a set of dimension at most $(n - 2)$.
\end{itemize}
\end{Cor}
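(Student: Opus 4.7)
The plan is to combine Theorem \ref{t:main} with White's parabolic Almgren--Federer dimension reduction and with the stratification/extension machinery of \cite{CM7} (strengthened to $C^1$ regularity by \cite{CM6}, as mentioned in the footnote). Once uniqueness is available in higher codimension, the hypersurface argument of \cite{CM7} should transfer with mostly cosmetic changes, so the main content is to extract the right quantitative input from the proof of Theorem \ref{t:main} and then run the formal scheme.

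First I would stratify the cylindrical singular set as $\cS = \bigcup_{k=1}^{n} \cS_k$, where $\cS_k$ is the locus of singular points whose (now unique, by Theorem \ref{t:main}) tangent flow is $\SS^k_{\sqrt{2k}} \times \RR^{n-k}$. White's parabolic dimension reduction \cite{W3} gives $\dim \cS_k \le n-k$, so $\bigcup_{k \ge 2} \cS_k$ already sits inside a set of dimension at most $n-2$ and supplies the exceptional piece of the conclusion. It therefore remains to cover the top stratum $\cS_1$ by finitely many compact embedded $C^1$ submanifolds of dimension at most $n-1$.

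Points of $\cS_1$ are modeled on $\gamma \times \RR^{n-1}$ for some closed Abresch--Langer curve $\gamma$, and Theorem \ref{t:main} produces a canonical assignment $x \mapsto V(x)$, where $V(x) \subset \RR^N$ is the $(n-1)$-dimensional spine of the unique tangent cylinder at $x$. The next step is to upgrade qualitative to quantitative uniqueness by reading a polynomial-in-scale decay rate for the distance from the parabolically rescaled flow to its tangent cylinder out of the extension/improvement iteration used to prove Theorem \ref{t:main}. Such a rate implies (i) H\"older continuity of the spine map $V$ on $\cS_1$, and (ii) a quantitative parabolic Reifenberg-type flatness of $\cS_1$, with the affine plane through $x$ in direction $V(x)$ approximating $\cS_1$ at every scale around $x$. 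Feeding (i) and (ii) into the Whitney/Reifenberg construction of \cite{CM7,CM6} on each of finitely many compact pieces of the (compact) space-time singular set produces embedded $C^1$ submanifolds of dimension at most $n-1$ whose union contains $\cS_1$; together with $\bigcup_{k\ge 2}\cS_k$ this yields the corollary.

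The main obstacle is extracting the explicit decay rate in higher codimension. This is precisely where the new curvature term $P$ of \eqr{e:definePhere} enters: it is sign-indefinite, cannot be treated at the linear level, and is only controllable in certain directions, so absorbing $P$ through the iteration without destroying the good signs needed for the improvement step is the technical heart of the paper and the real work behind the corollary. Once that quantitative rate is secured, the stratification and Whitney/Reifenberg steps transfer from \cite{CM7,CM6} without substantive change.
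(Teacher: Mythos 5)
Your proposal is correct and takes essentially the same route as the paper: the paper's proof of Corollary \ref{c:cmain} consists precisely of the observation that the machinery of \cite{CM7} (and its $C^1$ upgrade noted in \cite{CM6}) runs on two ingredients -- monotonicity and a quantitative form of uniqueness -- both of which now hold in all codimension, the latter via the discrete \L{}ojasiewicz inequality of Theorem \ref{t:discre} and its iteration in Lemma \ref{c:djsums1}. You have merely unpacked what the cited \cite{CM7} argument does internally (stratification by the tangent cylinder's Euclidean factor, dimension reduction for the lower strata, and the Reifenberg/Whitney construction applied to the top stratum via the decay rate), which is consistent with the paper's one-paragraph reduction.
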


  The main ingredients in \cite{CM7} were monotonicity, which holds in all codimension, and a quantitative form of uniqueness.   This paper proves the desired uniqueness    in all codimension and, thus, the argument in \cite{CM7} gives Corollary \ref{c:cmain}.
    For simplicity,   Corollary \ref{c:cmain} is stated just for the top-dimensional part    of $\cS$, but much more is true; cf. \cite{CM7}. 

 In codimension one uniqueness was also a crucial ingredient in 
  optimal regularity for the degenerate elliptic arrival time equation,
 \cite{CM2},  \cite{CM5}, \cite{CM6}.   
   In higher codimension, the level set equation becomes a system;  \cite{AmS}, \cite{B},  \cite{CnGG},  \cite{ES}, \cite{OF},   \cite{OS}, \cite{Wa}.   
Uniqueness in higher codimension   should have similar applications to  this degenerate system that is much less understood; see, e.g.,   page $465$ in  \cite{OF}.
 Uniqueness also played an  important role in  classifying certain ancient flows; see \cite{BC}, \cite{ChHH}, \cite{ChHHW} and references therein. 
 
 \subsection{Overview of the key steps}		\label{ss:overview}
 
 Rescaled MCF  is the negative gradient flow for
the Gaussian area $F$
\begin{align}	\label{e:gaussianF}
	F(\Sigma) =  \left( 4 \pi \right)^{ - \frac{n}{2}} \, \int_{\Sigma} \e^{ - \frac{|x|^2}{4} } \, .
\end{align}
Up to reparameterizations, rescaled MCF  is equivalent to continuously rescaling a MCF.
 Uniqueness of blowups for MCF is then equivalent to uniqueness of limits for rescaled MCF.   Suppose, thus, that $\Sigma_t$ is a rescaled MCF and $t_i \to \infty$ is a sequence where $\Sigma_{t_i}$ converges to a cylinder.
 The point is   to show that $\Sigma_t$ converges to the cylinder at some definite rate.  This is delicate and any rate must be  slow due to the presence of non-integrable Jacobi fields.  A rate would follow  from   an inequality for $F$ of the form
 \begin{align}	\label{e:LjF}
 	\left|F(\Sigma_t) - \lim_{t\to \infty} F(\Sigma_t) \right| \lesssim  \| \nabla F \|_{L^2}^{1+ \tau} \, , 
\end{align}
where $\tau > 0$ and the inequality need only hold near a cylinder.
 The proof of this  estimate  is given in Proposition \ref{t:discre}.    We will explain the   ideas that go into the proof next, suppressing  technical points   to give the broad picture.{\footnote{For this discussion, we ignore  constants, $\log$ terms, and  differences between $L^2$ norms and $W^{1,2}$ norms, etc.  These  technical points are lower order and are addressed by giving up a little in the exponents.}}
 
One of the key difficulties will be that $\Sigma_t$ will not be a graph over the entire cylinder.  We will use two measures of  closeness to a cylinder: the radius $r$ of the ball it is graphical over and the Gaussian $L^2$ 
norm $\| U(\cdot , t) \|_{L^2}$ of the graph.  Here $U(x,t)$ will be a normal vector field on the cylinder and we will assume that $U$ has compact support and the intersection of the flow with $B_r$ is contained in the graph of $U$.

It will be useful to consider both  $\Sigma_t$ and the graph $\Sigma_U$ of $U$; these agree in  $B_r$ but differ outside.  It is easier to work with $\Sigma_U$ and, in fact, Proposition \ref{p:gradLoj} will (roughly) show that
\begin{align}	\label{e:6p5}
	|F(\Sigma_U) -   \lim_{t\to \infty} F(\Sigma_t)| \lesssim  \| \nabla_{\Sigma_U} F \|_{L^2} \, \| U \|_{L^2} + \| U \|_{L^2}^3 \, .
\end{align}
Since $\Sigma_U$ and $\Sigma_t$ agree in  $B_r$, combining \eqr{e:6p5} with the triangle inequality gives
\begin{align}	\label{e:6p5a}
	|F(\Sigma_t) -  \lim_{t\to \infty} F(\Sigma_t)| \lesssim \left(  \| \nabla  F \|_{L^2} + \e^{ - \frac{r^2}{8} } \right) \, \| U \|_{L^2} + \| U \|_{L^2}^3 + \e^{ - \frac{r^2}{4}}  \, .
\end{align}
The crucial estimates on $r$ and $\| U \|_{L^2}$ are then given in 
 Theorem \ref{t:shrinkerscale}, which roughly shows that there is some $\beta > 0$ so that 
 \begin{align}
 	\e^{ - \frac{r^2}{4} } &\lesssim  \| \nabla F \|_{L^2}^{1+ \beta} \, , \label{e:t72a} \\
	\| U \|_{L^2}^2 &\lesssim  \| \nabla F \|_{L^2} \, .  \label{e:t72b}
 \end{align}
 The inequality \eqr{e:LjF} follows by 
  using \eqr{e:t72a} and \eqr{e:t72b} in \eqr{e:6p5a}.
   
 To explain the ideas behind \eqr{e:t72a} and \eqr{e:t72b}, we need
     a tensor $\tau$ that is parallel on cylinders.  
    Proposition \ref{p:Lojae1A} roughly gives that
\begin{align}
	\| U \|_{L^2}^2  \lesssim \| \nabla F \|_{L^2} + \| \nabla \tau \|_{L^2}^2 \, ,
\end{align}
so we must bound   $\nabla \tau$.  This is where the  quantity $P$ comes in as
  Theorem \ref{c:kappa}  gives  
 \begin{align}	\label{e:from2q2}
 	\| \nabla \tau \|_{L^2}^2  \lesssim \| P \|_{L^1} + \| \nabla F \|_{L^2} + \e^{ - \frac{r^2}{4} } \, .
 \end{align}
 The estimate comes from integrating a differential inequality for $\tau$ and the exponential term comes from cutting off at scale $r$.
 The quantity $P$ vanishes for hypersurfaces, but not in higher codimension.  However,
  Proposition \ref{l:PLone} gives an $L^1$ bound on $P $  
\begin{align}	 \label{e:fromPLone}
	\| P \|_{L^1}  \lesssim  \| U \|_{L^2}^3 + \| U \|_{L^2} \, \| \nabla F \|_{L^2}  + \| \nabla F \|_{L^2}^2 
	 \lesssim  \| U \|_{L^2}^{2(1+ \nu)} +  \| \nabla F  \|_{L^2}^{2(1-\nu)} \, ,
\end{align}
where the second inequality holds for any $\nu > 0$ by Young's inequality.
Combining these estimates gives that
 \begin{align}	 
 	\| U \|_{L^2}^2  \lesssim   \| U \|_{L^2}^{2(1+ \nu)} + \| \nabla F \|_{L^2} + \e^{ - \frac{r^2}{4} } \, .
 \end{align}
This allows us to absorb the first term on the right to get
 \begin{align}	 	\label{e:herewego}
 	\| U \|_{L^2}^2  \lesssim    \| \nabla F \|_{L^2} + \e^{ - \frac{r^2}{4} } \, .
 \end{align}
To complete the proof of \eqr{e:t72a} and \eqr{e:t72b}, we combine \eqr{e:herewego} with the extension, which holds in all codimension, from \cite{CM3}.  Roughly, this extension says that if $\Sigma_t$ is a small graph over the cylinder on a scale $s$ with $\| \nabla F \|_{L^2} \lesssim \e^{ - \frac{s^2}{4}}$, then it remains a graph (with worse bounds) out to scale $(1+ \alpha)s$.   On the other hand, \eqr{e:herewego} gives improved estimates for the graph on any scale up to this.  When we iterate these two steps, we keep moving to larger scales and eventually get \eqr{e:t72a}.  Using \eqr{e:t72a} in \eqr{e:herewego} then gives \eqr{e:t72b}.
 
Finally,    it was crucial in proving \eqr{e:herewego} that the exponent on $\| U \|_{L^2}$ in \eqr{e:fromPLone} was greater than two.  The estimate  \eqr{e:fromPLone} will be proven by Taylor expanding $P$ around the cylinder.  We will see that $P$ and its gradient vanish at the cylinder, which formally gives 
$
	\| P \|_{L^1}  \lesssim \| U \|_{L^2}^2 \, .
$
However, this would not be good enough.  A higher power would follow if the  Hessian term in the Taylor expansion of $P$ also vanishes, but, unfortunately, this is not true.  However,   the Hessian of $P$ is better in certain directions and this will be just enough to get   \eqr{e:fromPLone}.

    \section{Submanifolds in higher codimension}
    
Let  $\Sigma^n \subset \RR^N$  be an $n$-dimensional submanifold with second fundamental form $A(X,Y) = \nabla_X^{\perp} Y$.  In contrast to   hypersurfaces, $A$ is now a vector-valued two-tensor, taking values in the normal bundle.   Given a normal vector field $V$, let $A^V$ be the real-valued symmetric two-tensor $A^V = \langle A , V \rangle$.  
Let $E_i$ be an orthonormal frame for the tangent space of $\Sigma$ 
 and $\bH = - A (E_i , E_i) = - A_{ii}$  the  mean curvature vector.       Let $\phi$ be the normal vector field
\begin{align}
	\phi = \frac{1}{2} \, x^{\perp} - \bH \, .
\end{align}
Rescaled MCF is given by $x_t = \phi$; static solutions, with $\phi = 0$, are shrinkers.
 When $\bH \ne 0$,  the principal normal is $\bN = \frac{\bH}{|\bH|}$ and  the tensor 
$
	\tau   \equiv \frac{A}{|\bH|}$.  Note that the trace of $\tau$ is $-\bN$.

Let $\e^{-f} = \e^{ - \frac{|x|^2}{4} }$ be the Gaussian weight.  The Gaussian $L^2$ inner product of functions $u$ and $v$ is  
$
	 \left( 4 \pi \right)^{ - \frac{n}{2}} \,  \int_{\Sigma} u v \, \e^{ - f}$ 
	and  $\| u \|_{L^2}$   denotes the Gaussian $L^2$ norm.
Following \cite{CM1}, the entropy $\lambda$ is the supremum of $F$ over all translations and dilations
 $\lambda (\Sigma) = \sup_{c,x_0} \, F (c\,\Sigma + x_0)$.
 
A tangent flow is the limit of a sequence of rescalings at a singularity. For instance, a tangent flow to $M_t$ at the origin in space-time is the limit of a sequence of rescaled flows $\frac{1}{\delta_i}\,M_{\delta_i^{2}\,t}$ where $\delta_i\to 0$.  A priori, different sequences $\delta_i$ could give different tangent flows. By a monotonicity formula of Huisken, \cite{H2}, and an argument of Ilmanen and White, \cite{I}, \cite{W1}, tangent flows are shrinkers, i.e., self-similar solutions of MCF that evolve by rescaling.

Define the  operators $\cL$ and $L$  (cf. \cite{CM1}) by  $\cL  =  \Delta   - \frac{1}{2} \, \nabla_{x^T}$ and 
\begin{align}
	L  &= \cL  + \frac{1}{2} + \sum_{k,\ell}  \,  \langle  \cdot   , A_{k \ell}  \rangle \, A_{ k \ell} \, .
\end{align}
The operator $\cL$ is defined on functions or, more generally, on tensors, while $L$ is defined  on normal vector fields and, more generally,
tensors with values in the normal bundle.
Both operators  are self-adjoint with respect to the Gaussian inner product.

One of the important ingredients in this paper is the next general
  Simons identity for $A$ and $\bH$ in higher codimension.  The terms involving $\phi$ drop out for a shrinker.

\begin{Pro}	\label{p:simonsG}
We have
\begin{align}	\label{e:simonsG1}
	\left(L \, A \right)_{ij} &=     A_{ij}     
	+     2\,  \sum_{k, \ell}   \,  \langle A_{j \ell}   , A_{ik}  \rangle \, A_{\ell k}    -    \sum_{m,\ell}   \,
	 \left\{ \langle A_{m \ell}  , A_{i \ell}  \rangle A_{j m}   + \langle A_{j \ell}   , A_{m \ell}  \rangle A_{im}  \right\}   \notag \\
	 &+  \Hess_{\phi} (E_i, E_j)  +  \sum_m A_{jm}^{\phi}  A_{im} 
          \, , \\
          L \, {\bf{H}} &= {\bf{H}} - \Delta \phi -  \sum_{i,m} A_{im}^{\phi}  A_{im} \,  . \label{e:simonsG2}
\end{align}
\end{Pro}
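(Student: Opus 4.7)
The plan is to derive \eqref{e:simonsG1} by a Simons-type calculation adapted to higher codimension, then recover \eqref{e:simonsG2} by contracting in $i=j$.

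First, I fix a point $p\in\Sigma$ and an orthonormal tangent frame $\{E_i\}$ with $\nabla^\Sigma E_i(p)=0$. Since $\RR^N$ is flat, the Codazzi equation gives $\nabla^\perp_{E_k}A_{ij}=\nabla^\perp_{E_i}A_{kj}$, so at $p$
\begin{align*}
\Delta A_{ij}=\sum_k\nabla^\perp_{E_k}\nabla^\perp_{E_i}A_{kj}=-\nabla^\perp_{E_i}\nabla^\perp_{E_j}\bH+\sum_k[\nabla^\perp_{E_k},\nabla^\perp_{E_i}]A_{kj},
\end{align*}
using $\sum_k\nabla^\perp_{E_k}A_{kj}=-\nabla^\perp_{E_j}\bH$ (again by Codazzi). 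Expanding the commutator via the Gauss equation on the tangent indices and the Ricci equation on the normal-bundle factor of $A_{kj}$ produces the cubic-in-$A$ expression
\begin{align*}
2\sum_{k,\ell}\langle A_{j\ell},A_{ik}\rangle A_{\ell k}-\sum_{m,\ell}\{\langle A_{m\ell},A_{i\ell}\rangle A_{jm}+\langle A_{j\ell},A_{m\ell}\rangle A_{im}\}
\end{align*}
together with a Ricci-type contraction $-\sum_m A^\bH_{jm}A_{im}$ (coming from the $\bH$ piece of $\Ric^\Sigma$) and the $|A|^2$-type term $-\sum_{k,\ell}\langle A_{ij},A_{k\ell}\rangle A_{k\ell}$.

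Next I use $\bH=\tfrac12 x^\perp-\phi$. From $\nabla^\perp_{E_i}x^\perp=-A(E_i,x^T)$, one more covariant derivative combined with Codazzi gives at $p$
\begin{align*}
\tfrac12\nabla^\perp_{E_i}\nabla^\perp_{E_j}x^\perp=-\tfrac12 A_{ij}-\tfrac12\sum_m A^{x^\perp}_{jm}A_{im}-\tfrac12\nabla^\perp_{x^T}A_{ij}.
\end{align*}
The drift $-\tfrac12\nabla^\perp_{x^T}A_{ij}$ combines with $\Delta A_{ij}$ to form $\cL A_{ij}$. Splitting $\tfrac12 x^\perp=\bH+\phi$ turns $\tfrac12\sum_m A^{x^\perp}_{jm}A_{im}$ into $\sum_m A^\bH_{jm}A_{im}+\sum_m A^\phi_{jm}A_{im}$; the first piece exactly cancels the Ricci contraction $-\sum_m A^\bH_{jm}A_{im}$ from the commutator, leaving $\sum_m A^\phi_{jm}A_{im}$. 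Absorbing the $\tfrac12 A_{ij}$ and the $-\sum_{k,\ell}\langle A_{ij},A_{k\ell}\rangle A_{k\ell}$ pieces into $L$ via its definition yields \eqref{e:simonsG1}.

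For \eqref{e:simonsG2}, I contract \eqref{e:simonsG1} in $i=j$. Since $\sum_i A_{ii}=-\bH$ and $L$ commutes with this trace, the left side is $-L\bH$. After relabeling indices, the three cubic-in-$A$ terms on the right all take the common form $\pm\sum_{a,b,c}\langle A_{ab},A_{ac}\rangle A_{bc}$: the $2\sum\langle A_{i\ell},A_{ik}\rangle A_{\ell k}$ contributes $+2$, and each of the two summands in $-\sum_{m,\ell}\{\cdots\}$ contributes $-1$, so the total vanishes. What remains is $-\bH+\Delta\phi+\sum_{i,m}A^\phi_{im}A_{im}$, which rearranges to \eqref{e:simonsG2}. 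The main obstacle is the commutator expansion in the first step: in codimension one the normal-bundle curvature $R^\perp$ vanishes and only the Gauss terms survive, whereas in higher codimension the Ricci equation contributes genuinely new quadratic pieces, and matching coefficients so that the Gauss and Ricci contributions assemble into the precise symmetric form on the right of \eqref{e:simonsG1} requires careful index-level bookkeeping.
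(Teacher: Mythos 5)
Your argument is correct and follows the same two-stage structure as the paper: start from a Simons-type identity for $\Delta_\Sigma A$ in terms of $\Hess_{\bH}$ plus cubic-in-$A$ terms, then substitute $\bH=\tfrac12 x^\perp-\phi$ and the formula $\nabla^\perp_{E_i}x^\perp=-A(E_i,x^T)$ to convert $\Hess_{\bH}$ into $\Hess_\phi$ plus the drift $\nabla^\perp_{x^T}A$ and the quadratic $A^{x^\perp}A$ term, absorb into $\cL$ and then $L$, and finally trace to obtain \eqr{e:simonsG2}. The one genuine difference is that the paper simply cites the general higher-codimension Simons identity (equation (23) of \cite{AB}) as its starting point, while you propose to re-derive it from scratch via $\Delta A_{ij}=\sum_k\nabla^\perp_{E_k}\nabla^\perp_{E_i}A_{kj}=-\nabla^\perp_{E_i}\nabla^\perp_{E_j}\bH+\sum_k[\nabla^\perp_{E_k},\nabla^\perp_{E_i}]A_{kj}$ and the Gauss and Ricci equations; your derivation lands on the correct cubic-in-$A$, $\Ric^\Sigma$-type, and $\langle A_{ij},A_{k\ell}\rangle A_{k\ell}$ contributions, but you leave the commutator expansion itself as an acknowledged index exercise rather than carrying it out. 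The paper's route is more economical; yours is more self-contained. One point you should make explicit that the paper flags and you leave implicit: $\Hess_{\bH}$ is not symmetric in its two slots (its antisymmetric part is $R^\perp(\cdot,\cdot)\bH$), so the index order in $\Hess_{\bH}(E_i,E_j)$ versus $\Hess_{\bH}(E_j,E_i)$ and in $A^{x^\perp}_{jm}A_{im}$ versus $A^{x^\perp}_{im}A_{jm}$ matters term-by-term, and one needs the Ricci equation (as the paper notes) to see that the combination $-\sum_{k,m}\langle A_{ij},A_{km}\rangle A_{km}-\Hess_{\bH}(E_j,E_i)$ is symmetric before freely swapping slots in the final assembly; your commutator approach handles this automatically but you never say so, and a careful write-up must track it.
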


Proposition \ref{p:simonsG} will be proven below in Section \ref{s:genSimo}.  It is interesting to compare \eqr{e:simonsG1} with the much simpler  
 proposition $1.2$ in \cite{CM3} for hypersurfaces where $2\,  \sum_{k, \ell}   \,  \langle A_{j \ell}   , A_{ik}  \rangle \, A_{\ell k}    -    \sum_{m,\ell}   \,
	 \left\{ \langle A_{m \ell}  , A_{i \ell}  \rangle A_{j m}   + \langle A_{j \ell}   , A_{m \ell}  \rangle A_{im}  \right\} = 0$.  In particular, a hypersurface shrinker has $L \, H = H$ and $L \, A = A$, so the ratio $\frac{A}{H}$ has special significance.  
	 The failure of this in higher codimension is a crucial point in this paper.

 \section{An integral bound for $\nabla \frac{A}{|\bH|}$}	\label{s:genSimo}
 
  Throughout this section, $\Gamma^n \subset \RR^N$  is an $n$-dimensional submanifold.  The tensor $\tau =  \frac{A}{|\bH|}$ is parallel on cylinders and we will show that, roughly,   $|\nabla \tau |$ controls the distance to a cylinder (this will be established in the next section).  In this section, we will bound
  $\nabla \tau$ using that $\tau$ satisfies a drift equation.  This equation is much more  complicated in higher codimension where a new quantity $P$ comes in
\begin{align}	\label{e:definePhere}
	P & \equiv |A|^2 \,|A^{\bN}|^2-    
	2 |A^2|^2  
	   + \sum_{i,j,k,\ell} \left\{  2\,   \langle A_{j \ell}   , A_{ik}  \rangle  \, \langle A_{\ell k} , A_{ij} \rangle \,      -  \langle A_{ij} ,  A_{k\ell} \rangle^2  \right\}   \notag \\
	   &+ \frac{|A|^2}{4\,|\bH|^2} \, \left( \left| A^{\bN} (x^T , \cdot) \right|^2 - \left| A  (x^T , \cdot) \right|^2 \right) \, ,
\end{align}
where $A^2$ is the real-valued symmetric two-tensor $(A^2)_{ij} = \langle A_{ik} , A_{kj} \rangle$ and $x^T$ is the tangential projection of the position vector $x$.  The quantity $P$ causes major problems in higher codimension.

The main result of this section is an integral bound for $\nabla \tau$ in terms of the quantity $P$.
 
 \begin{Thm}	\label{c:kappa}
If  $\psi$ is a compactly supported function   and $\bH \ne 0$ on the support of $\psi$, then 
\begin{align}	\label{e:ckappa}
	\int  \psi^2 \,\left(  |\nabla \tau|^2 \, |\bH|^2 + 2P \right)  \,  \e^{-f} & \leq 4\, \int  |A|^2 \, |\nabla \psi |^2 \, \e^{-f}  \notag \\
	&+ C\, \int \left(|A| + \frac{|A|^2}{|\bH|} \right) \,\left( |A|^2 \, |\phi| +  \left| \Hess_{\phi} \right| \right)\, \psi^2 \, \e^{-f} \\
	&+ C \, \int \frac{|A|^2\, |\nabla^{\perp} \phi|}{|\bH|^2} \left( |A(x^T , \cdot)| + |\nabla^{\perp} \phi| \right) \, \psi^2 \, \e^{-f} \, . \notag
\end{align}
\end{Thm}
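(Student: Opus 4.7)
My plan is to derive a precise integral identity for $|\bH|^2|\nabla\tau|^2+P$ modulo $\phi$-errors and then absorb a boundary cross term via AM--GM in such a way that the factor $2$ on $P$ and the factor $4$ in front of $\int |A|^2|\nabla\psi|^2$ both emerge from multiplying the resulting inequality by two.

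I start with two pointwise identities: the product-rule formula $|\bH|^2|\nabla\tau|^2 = |\nabla^\perp A|^2 + |\tau|^2|\nabla|\bH||^2 - \frac{1}{|\bH|}\nabla|\bH|\cdot\nabla|A|^2$, coming from $|\bH|\nabla^\perp\tau = \nabla^\perp A - \tau\,\nabla|\bH|$, and the Bochner identity $|\bH|^2|\nabla\tau|^2 = \frac{1}{2}|\bH|^2\cL|\tau|^2 - |\bH|^2\langle\tau,\cL\tau\rangle$. Using the product rule for $\cL(A/|\bH|)$ together with $|\bH|\cL|\bH|=\langle\bH,\cL\bH\rangle+|\nabla^\perp\bH|^2-|\nabla|\bH||^2$, I would rewrite $|\bH|^2\langle\tau,\cL\tau\rangle$ as a linear combination of $\langle A,\cL A\rangle$, $|\tau|^2\langle\bH,\cL\bH\rangle$, $|\tau|^2|\nabla^\perp\bH|^2$, $|\tau|^2|\nabla|\bH||^2$, and a divergence-type correction. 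Substituting Proposition \ref{p:simonsG} via $\cL = L - \frac{1}{2} - \sum\langle\cdot,A_{k\ell}\rangle A_{k\ell}$, the quartic-in-$A$ contributions inside $\langle A,\cL A\rangle$, together with the trace term $|A|^2|A^\bN|^2$ extracted from $\langle\bH,\cL\bH\rangle$ via $|A^\bH|^2 = |\bH|^2|A^\bN|^2$, reassemble exactly into the first line of $P$; the $\Hess_\phi$, $A^\phi A$, and $\Delta\phi$ contributions from Simons fit into the middle right-hand side term $(|A|+|A|^2/|\bH|)(|A|^2|\phi|+|\Hess_\phi|)$.

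The second line of $P$ enters through the Kato defect. Differentiating $\bH = \frac{1}{2}x^\perp - \phi$ in the normal bundle gives $\nabla^\perp_k\bH = -\frac{1}{2}A(x^T,E_k) - \nabla^\perp_k\phi$, so $|\tau|^2\bigl[|\nabla^\perp\bH|^2 - |\nabla|\bH||^2\bigr] = -P_2 + \mathrm{cross}$, where $P_2$ is the second line of $P$ and $\mathrm{cross}$ is controlled by $C\frac{|A|^2|\nabla^\perp\phi|}{|\bH|^2}\bigl(|A(x^T,\cdot)|+|\nabla^\perp\phi|\bigr)$, matching the third right-hand side term. The leftover pure-gradient pieces collapse via the algebraic identity $\frac{1}{|\bH|}\nabla|\bH|\cdot\nabla|A|^2 - \frac{1}{2}\nabla|\bH|^2\cdot\nabla|\tau|^2 = 2|\tau|^2|\nabla|\bH||^2$. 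Integrating against $\psi^2 e^{-f}$ and using self-adjointness of $\cL$ to integrate the $\frac{1}{2}|\bH|^2\cL|\tau|^2$ term by parts yields the clean identity
$$\int\psi^2\bigl(|\bH|^2|\nabla\tau|^2 + P\bigr)\,e^{-f} = -\int\psi\,|\bH|^2\,\nabla\psi\cdot\nabla|\tau|^2\,e^{-f} + \mathrm{Err}_\Phi + \mathrm{Err}_{\mathrm{Kato}},$$
where $\mathrm{Err}_\Phi$ and $\mathrm{Err}_{\mathrm{Kato}}$ are bounded by the second and third right-hand side integrands in \eqref{e:ckappa} respectively.

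To finish, I bound the cross term using $|\nabla|\tau|^2|\le 2|\tau||\nabla\tau|$ and $|\bH||\tau|=|A|$ together with AM--GM at $\epsilon=1$, obtaining $\bigl|\int\psi\,|\bH|^2\,\nabla\psi\cdot\nabla|\tau|^2\,e^{-f}\bigr| \le 2\int|A|^2|\nabla\psi|^2\,e^{-f} + \frac{1}{2}\int\psi^2|\bH|^2|\nabla\tau|^2\,e^{-f}$. Absorbing the $|\bH|^2|\nabla\tau|^2$ piece into the left-hand side and multiplying the resulting inequality by $2$ yields \eqref{e:ckappa} exactly, with the stated constants. \emph{The main obstacle} is the delicate algebraic identification: one must verify that the four quartic tensorial terms from Simons, once contracted with $A$ and summed over $i,j$, reduce precisely to $-2|A^2|^2 + 2\sum\langle A_{j\ell},A_{ik}\rangle\langle A_{\ell k},A_{ij}\rangle - \sum\langle A_{ij},A_{k\ell}\rangle^2$, and that the Kato defect, once expanded via the shrinker-type formula for $\nabla^\perp\bH$, reproduces $-P_2$ with exactly the coefficient $1/4$. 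A secondary delicate point is using Cauchy--Schwarz on the $x^T$ and $\nabla^\perp\phi$ cross terms carefully enough to fit into the three specific right-hand side terms without introducing stray higher-order errors.
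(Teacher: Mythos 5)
Your proposal is correct and takes essentially the same approach as the paper: the one visible difference is that the paper packages the computation with the weighted drift Laplacian $\cL_{|\bH|^2} = \cL + \nabla_{\nabla\log|\bH|^2}$ (self-adjoint with respect to $|\bH|^2\,\e^{-f}$) and the quotient rule $|\bH|^2\cL_{|\bH|^2}\tau = |\bH|\,\cL A - A\,\cL|\bH|$ from \cite{CIM}, which absorbs the $\nabla|\bH|^2\cdot\nabla|\tau|^2$ cross terms automatically, whereas you expand with plain $\cL$ and verify by hand (via $\frac{1}{|\bH|}\nabla|\bH|\cdot\nabla|A|^2 - \frac{1}{2}\nabla|\bH|^2\cdot\nabla|\tau|^2 = 2|\tau|^2|\nabla|\bH||^2$) that those terms cancel. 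The algebraic obstacles you flag at the end are precisely the content of Lemma \ref{l:kappa}, Proposition \ref{p:simonsG}, and Corollary \ref{c:nablabN}, so they are real checkpoints but are carried out in the paper.
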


This estimate will be used to bound $\nabla \tau$ in terms of $\phi$, the Gaussian energy of the cutoff $\psi$, and the $L^1$ norm of $P$.  Bounding $\| P \|_{L^1}$ is a significant challenge that cannot be handled at the linear level;  this bound takes up sections
\ref{s:s5} and \ref{s:s6}.

\vskip1mm
The next lemma shows that $P$ vanishes in codimension one.

\begin{Lem}	\label{l:PzeroHyper}
If $\Gamma^n$ is contained in an affine $(n+1)$-plane $W \subset \RR^N$ and $\bH \ne 0$, then $P\equiv 0$.
\end{Lem}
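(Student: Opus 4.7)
The plan is to reduce the statement to a direct codimension-one computation. The key observation is that since $\Gamma \subset W$ with $W$ an affine $(n+1)$-plane in $\RR^N$, the Euclidean connection restricted to $\Gamma$ preserves $W$: for tangent vector fields $X,Y$ on $\Gamma$, the Euclidean derivative $\nabla_X Y$ stays inside $W$ because $W$ is totally geodesic. Consequently, the normal projection $A(X,Y) = \nabla_X^\perp Y$ lands in the one-dimensional subbundle of the normal bundle consisting of vectors normal to $\Gamma$ inside $W$. Fixing a unit normal $\nu$ to $\Gamma$ inside $W$, this gives $A_{ij} = h_{ij}\,\nu$ for a real-valued symmetric two-tensor $h$, and hence $\bH = H\,\nu$ and (since $\bH \ne 0$) $\bN = \pm\nu$.

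Once this is set up, each summand in the definition \eqref{e:definePhere} of $P$ reduces to a scalar expression in $h$. First,
$|A|^2 = \sum h_{ij}^2$ and $|A^{\bN}|^2 = \sum \langle h_{ij}\nu,\bN\rangle^2 = \sum h_{ij}^2$, so $|A|^2|A^{\bN}|^2 = |h|^4$. Next, $A^2_{ij} = \langle A_{ik},A_{kj}\rangle = h_{ik}h_{kj} = (h^2)_{ij}$ as a scalar matrix, whence $|A^2|^2 = \tr(h^4)$. The cubic sum becomes
\begin{equation*}
\sum_{i,j,k,\ell} 2\,\langle A_{j\ell},A_{ik}\rangle\langle A_{\ell k},A_{ij}\rangle
= 2 \sum_{i,j,k,\ell} h_{ij}h_{j\ell}h_{\ell k}h_{ki} = 2\,\tr(h^4),
\end{equation*}
while $\sum \langle A_{ij},A_{k\ell}\rangle^2 = \sum h_{ij}^2 h_{k\ell}^2 = |h|^4$. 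Adding these up gives $|h|^4 - 2\tr(h^4) + 2\tr(h^4) - |h|^4 = 0$, so the top three terms of $P$ cancel identically.

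For the remaining term, I note that $A^{\bN}(x^T,Y) = h(x^T,Y)$ is scalar while $A(x^T,Y) = h(x^T,Y)\,\nu$ is a normal-vector-valued one-form whose pointwise length is $|h(x^T,Y)|$. Therefore $|A^{\bN}(x^T,\cdot)|^2 = \sum_j h(x^T,E_j)^2 = |A(x^T,\cdot)|^2$, and the prefactor $\frac{|A|^2}{4|\bH|^2}$ multiplies a vanishing quantity. Combining, $P \equiv 0$.

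There is no real obstacle here — the content is entirely algebraic once one observes that $A$ factors through a single normal direction. The only mild care needed is to verify the equality of the Frobenius-type contractions for a rank-one-valued tensor (so that the two cubic contractions actually recombine into $2\tr(h^4)$), which is exactly the simplification recorded in Proposition 1.2 of \cite{CM3} for hypersurfaces and the reason why the delicate quantity $P$ is a purely higher-codimension phenomenon.
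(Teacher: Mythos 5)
Your proposal is correct and takes essentially the same approach as the paper: both reduce to the codimension-one observation that $A = A^{\bN}\,\bN$ (equivalently $A_{ij}=h_{ij}\nu$), after which the three quartic contractions cancel and the last term of $P$ vanishes since $|A^{\bN}(x^T,\cdot)| = |A(x^T,\cdot)|$. The only cosmetic difference is that the paper diagonalizes $A^{\bN}$ at a point and computes with eigenvalues, whereas you work directly with traces of powers of $h$, but this is the same computation in different notation.
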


 \begin{proof}
Since $\Gamma$ is codimension one, $A = A^{\bN} \, \bN$. It follows that $\left| A^{\bN} (x^T , \cdot) \right|^2 = \left| A  (x^T , \cdot) \right|^2 $ and
   $|A|^2 \, |\bH|^2 = |A^{\bH}|^2$.  Fix a point and choose the orthonormal tangent frame $E_i$  so that $A^{\bN}$ is diagonal with eigenvalues $\lambda_i$ (summing to $-H$). We have
 \begin{align}
	P &= |A|^4  -
	2 |A^2|^2 
	   + \sum_{i,j,k,\ell} \left\{  2\,   \langle A_{j \ell}   , A_{ik}  \rangle  \, \langle A_{\ell k} , A_{ij} \rangle \,      -  \langle A_{ij} ,  A_{k\ell} \rangle^2  \right\} \notag \\
	   &=  \left( \sum_{i} \lambda_i^2 \right)^2  -
	2 \sum_i \lambda_i^4    
	   +  2 \sum_{i} \lambda_i^4 - \sum_{i,k} \lambda_i^2  \lambda_k^2  = 0  \, .
\end{align}
 \end{proof}

\subsection{Proof of the general Simons identity}

In this subsection, we will prove the general Simons identity   Proposition \ref{p:simonsG}.
The next   lemma  computes  two covariant derivatives of the mean curvature $\bH$.  The Hessian of a normal vector field $V$ does not have to be symmetric.  By convention,  we take
\begin{align}
	\Hess_V (E_i , E_j) = \nabla_{E_j}^{\perp} \nabla_{E_i}^{\perp} V - \nabla_{ \nabla_{E_j}^T E_i}^{\perp} V \, .
\end{align}

\begin{Lem}	\label{l:gradbH}
We have $\nabla_{E_i}^{\perp} \bH = 
	- \frac{1}{2} A(x^T , E_i ) - \nabla_{E_i}^{\perp} \phi $ and
\begin{align}	\label{e:231}
	- \Hess_{\bf{H}}  (E_i, E_j) &=  \Hess_{\phi}  (E_i, E_j) + \frac{1}{2} \,  \left( \nabla_{x^T} A\right)_{ij}
	+ \frac{1}{2}  A_{ij}
	 + \frac{1}{2}  A_{ik} \,  A^{x^{\perp}}_{jk}
 \, .
\end{align}
\end{Lem}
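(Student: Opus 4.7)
The plan is to differentiate the identity $\bH = \frac{1}{2} x^{\perp} - \phi$, reducing everything to computations on the position vector via the splitting $x = x^T + x^{\perp}$ together with the Weingarten and Codazzi formulas in flat ambient space.

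For the first identity, I start from $\nabla_{E_i} x = E_i$, which is tangent, so taking normal parts of $E_i = \nabla_{E_i} x^T + \nabla_{E_i} x^{\perp}$ gives $0 = A(E_i , x^T) + \nabla_{E_i}^{\perp} x^{\perp}$, hence $\nabla_{E_i}^{\perp} x^{\perp} = - A(x^T , E_i)$. Substituting $x^{\perp} = 2\bH + 2\phi$ yields $\nabla_{E_i}^{\perp} \bH = -\frac{1}{2} A(x^T, E_i) - \nabla_{E_i}^{\perp} \phi$.

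For the Hessian identity, I would apply $\nabla_{E_j}^{\perp}$ to the first identity and then subtract the correction $\nabla_{\nabla_{E_j}^T E_i}^{\perp} \bH$ coming from the convention for $\Hess_{\bH}$. The key ingredients are: (i) the definition of $\nabla A$ together with Codazzi (which is an equality in $\RR^N$), giving
\begin{align*}
    \nabla_{E_j}^{\perp} A(x^T, E_i) = (\nabla_{x^T} A)_{ij} + A(\nabla_{E_j}^T x^T, E_i) + A(x^T, \nabla_{E_j}^T E_i);
\end{align*}
(ii) the tangential projection of $E_j = \nabla_{E_j} x^T + \nabla_{E_j} x^{\perp}$, which, combined with the Weingarten relation $(\nabla_{E_j} x^{\perp})^T = - \sum_k A^{x^{\perp}}_{jk} E_k$, gives $\nabla_{E_j}^T x^T = E_j + \sum_k A^{x^{\perp}}_{jk} E_k$. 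Inserting (ii) into (i) produces $A_{ij} + \sum_k A^{x^{\perp}}_{jk} A_{ik}$ plus the Codazzi term. The $A(x^T, \nabla_{E_j}^T E_i)$ piece is cancelled by the corresponding piece obtained when the first identity is applied to $\nabla_{\nabla_{E_j}^T E_i}^{\perp} \bH$, while the two $\phi$-derivatives reassemble, with the correct sign, into $-\Hess_{\phi}(E_i,E_j)$. Collecting the surviving terms and multiplying by $-1$ gives the stated formula.

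The main bookkeeping obstacle is sign and projection consistency: $A$ is vector-valued, normal Hessians carry a nontrivial connection correction, and the $A(x^T, \nabla_{E_j}^T E_i)$ cancellation is easy to mishandle. The one genuinely non-mechanical ingredient is the use of Codazzi to convert $(\nabla_{E_j} A)(x^T, E_i)$ into $(\nabla_{x^T} A)_{ij}$, which is what produces the clean $(\nabla_{x^T} A)_{ij}$ term in the final expression.
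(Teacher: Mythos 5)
Your proposal is correct and follows essentially the same route as the paper: the first identity comes from splitting $x = x^T + x^{\perp}$ and taking normal parts of $\nabla_{E_i} x = E_i$, and the Hessian identity comes from differentiating again, using Codazzi to convert $(\nabla_{E_j} A)(x^T, E_i)$ into $(\nabla_{x^T}A)_{ij}$ and the Weingarten relation to compute $\nabla_{E_j}^T x^T = E_j + A^{x^{\perp}}_{jk}E_k$. The only cosmetic difference is that the paper works at a point where $\nabla^T_{E_i} E_j = 0$ so the connection-correction term in $\Hess_{\bH}$ drops out immediately, whereas you track that term explicitly and observe the cancellation — both are valid and lead to the same bookkeeping.
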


\begin{proof}
Differentiating $\phi = \frac{1}{2} x^{\perp} - \bH$ gives
\begin{align}
	\nabla_{E_i}^{\perp} \bH = \frac{1}{2} \nabla^{\perp}_{E_i} (x-x^T) - \nabla_{E_i}^{\perp} \phi = 
	- \frac{1}{2} A(x^T , E_i ) - \nabla_{E_i}^{\perp} \phi  \, .
\end{align}

The equation \eqr{e:231} is tensorial, so we can work at a point and assume that   $\nabla_{E_i}^T E_j = 0$ at this point.
Using the definition of $\phi$, we can write 
the Hessian of $\bH$ as
\begin{align}
	- \Hess_{\bf{H}} (E_i, E_j) =  \Hess_{\phi}  (E_i, E_j) - \frac{1}{2} \, \nabla_{E_j}^{\perp} \nabla_{E_i}^{\perp} x^{\perp} \, .
\end{align}
We simplify the last term using $\nabla_{E_i} x = E_i$ and the Codazzi equations (and $\nabla_{E_j}^T E_i = 0$)
\begin{align}
	- \nabla_{E_j}^{\perp} \left(  \nabla_{E_i}^{\perp} x^{\perp} \right) &=  \nabla_{E_j}^{\perp} \left(  \nabla_{E_i}^{\perp} x^T \right) = 
	 \nabla_{E_j}^{\perp} \left(  A(E_i , x^T) \right)  \notag \\
	 &=   \left( \nabla_{E_j} A\right)  \left( E_i , x^T \right)  +    A(E_i , \nabla^T_{E_j} x^T)  =  \left( \nabla_{x^T} A\right)  \left( E_i , E_j \right)  +    A(E_i , \nabla^T_{E_j} x^T) 
	 \, .
	\end{align}
Next, we note that
\begin{align}
	\langle E_k  ,\nabla_{E_j}^T x^T \rangle &= \langle E_k , \nabla_{E_j} x^T \rangle = \langle E_k  , \nabla_{E_j} (x-x^{\perp}) \rangle = \delta_{jk} - 
	\langle E_k  , \nabla_{E_j} x^{\perp} \rangle \notag \\
	&= \delta_{jk} +
	\langle  \nabla_{E_j} E_k  ,  x^{\perp} \rangle  = \delta_{jk} +   A^{x^{\perp}} (E_j , E_k)   \, .
\end{align}
It follows that 
\begin{align}
	 \nabla^T_{E_j} x^T  = E_j + A^{x^{\perp}} (E_j , E_k)  \, E_k \, .
\end{align}
Putting this together gives \eqr{e:231}.
\end{proof}

\begin{Cor}	\label{c:nablabN}
If $\bH \ne 0$ and $\bN = \frac{\bH}{|\bH|}$, then 
\begin{align}
	|\bH|^2 \, |\nabla \bN |^2 &= |\nabla \bH|^2 - |\nabla |\bH||^2 = \frac{1}{4} \, \left| A(x^T , \cdot)  \right|^2 - \frac{1}{4} \, \left|  A^{\bN} (x^T , \cdot ) \right|^2 \notag \\
	&+ \Tr \, A^{ \nabla_{\cdot}^{\perp}\phi }(x^T , \cdot )     + \left|  \nabla^{\perp} \phi \right|^2 - \Tr \, A^{\bN} (x^T , \cdot ) \langle \nabla_{\cdot}^{\perp} \phi , \bN \rangle 
	- \left| \langle \nabla^{\perp} \phi , \bN \rangle \right|^2 \, .
\end{align}
\end{Cor}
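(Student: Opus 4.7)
The plan is to derive the corollary directly from Lemma \ref{l:gradbH} via two small computations: one expressing $|\bH|^2 \, |\nabla \bN|^2$ in terms of $|\nabla^\perp \bH|^2 - |\nabla |\bH||^2$, and one expanding each of these two quantities using the formula $\nabla_{E_i}^\perp \bH = -\tfrac{1}{2} A(x^T, E_i) - \nabla_{E_i}^\perp \phi$.

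For the first equality, I would start from the decomposition $\bH = |\bH| \, \bN$. Differentiating gives
\begin{align}
    \nabla_{E_i}^\perp \bH = (\nabla_{E_i} |\bH|) \, \bN + |\bH| \, \nabla_{E_i}^\perp \bN,
\end{align}
and since $|\bN| \equiv 1$ on the support where $\bH \ne 0$, the vector $\nabla_{E_i}^\perp \bN$ is orthogonal to $\bN$. Hence this is an orthogonal decomposition of $\nabla_{E_i}^\perp \bH$, and summing over $i$ yields $|\nabla^\perp \bH|^2 = |\nabla |\bH||^2 + |\bH|^2 \, |\nabla \bN|^2$, which is the first equality of the corollary.

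For the second equality, I would plug Lemma \ref{l:gradbH} into each term. Squaring $\nabla_{E_i}^\perp \bH = -\tfrac{1}{2}A(x^T,E_i) - \nabla_{E_i}^\perp \phi$ and summing in $i$ produces the three terms $\tfrac{1}{4}|A(x^T,\cdot)|^2$, $|\nabla^\perp \phi|^2$, and the cross term $\Tr A^{\nabla_\cdot^\perp \phi}(x^T,\cdot)$. Next, since $|\bH| = \langle \bH, \bN\rangle$ and $\bN$ has unit length (so $\langle \nabla_{E_i}^\perp \bN, \bN\rangle = 0$), we have $\nabla_{E_i}|\bH| = \langle \nabla_{E_i}^\perp \bH, \bN\rangle = -\tfrac{1}{2} A^{\bN}(x^T,E_i) - \langle \nabla_{E_i}^\perp \phi, \bN\rangle$. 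Squaring and summing yields the three subtracted terms $\tfrac{1}{4}|A^{\bN}(x^T,\cdot)|^2$, $|\langle \nabla^\perp \phi,\bN\rangle|^2$, and $\Tr A^{\bN}(x^T,\cdot)\, \langle \nabla_\cdot^\perp \phi, \bN\rangle$. Subtracting the two expansions gives exactly the stated formula.

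There is no real obstacle here; the whole argument is a short tensorial computation whose only subtlety is bookkeeping of the cross terms and the trace notation. The one conceptual point worth emphasizing is the use of $|\bN| = 1$ to split $\nabla^\perp \bH$ into its $\bN$-component (which carries $\nabla|\bH|$) and its $\bN$-orthogonal component (which carries $|\bH|\nabla\bN$); without this orthogonality, the Pythagorean identity relating $|\nabla^\perp \bH|^2$, $|\nabla|\bH||^2$, and $|\bH|^2|\nabla\bN|^2$ would fail.
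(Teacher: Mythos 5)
Your proposal is correct and follows essentially the same route as the paper's own proof: both establish the first equality by differentiating $\bH=|\bH|\,\bN$ and invoking $\langle\bN,\nabla^\perp\bN\rangle=0$ (the paper phrases the Pythagorean split the same way), and both establish the second equality by squaring the formula for $\nabla^\perp_{E_i}\bH$ from Lemma~\ref{l:gradbH} and for $\nabla_{E_i}|\bH|=\langle\nabla^\perp_{E_i}\bH,\bN\rangle$ (the paper writes this as $|\bH|\nabla_{E_i}|\bH|=\langle\nabla_{E_i}\bH,\bH\rangle$ before dividing) and then subtracting. The only cosmetic difference is that you consistently write $|\nabla^\perp\bH|$ where the paper abbreviates to $|\nabla\bH|$; this is the same quantity.
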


\begin{proof}
The first equality follows from $\nabla \bH = |\bH| \, \nabla \bN + (\nabla |\bH|)\, \bN$ and $\langle \bN , \nabla \bN \rangle = 0$.
Let $E_i$ be an orthonormal frame.
By Lemma \ref{l:gradbH},
  $\nabla_{E_i}^{\perp} \bH = 
	- \frac{1}{2} A(x^T , E_i ) - \nabla_{E_i}^{\perp} \phi $.  It follows that
\begin{align}	\label{e:comoo1}
	\left| \nabla \bH \right|^2 = \left| \frac{1}{2} A(x^T , E_i ) + \nabla_{E_i}^{\perp} \phi \right|^2 = \frac{1}{4} \, \left| A(x^T , E_i )  \right|^2 
	+  A^{ \nabla_{E_i}^{\perp}\phi }(x^T , E_i )     + \left|  \nabla_{E_i}^{\perp} \phi \right|^2 \, .
\end{align}
Similarly, we have for each $i$ that
\begin{align}
	  |\bH|  \, \nabla_{E_i} |\bH|  = \frac{1}{2} \,  \nabla_{E_i} |\bH|^2 = \langle \nabla_{E_i} \bH , \bH \rangle = -  \frac{1}{2}  A^{\bH} (x^T , E_i )   - \langle \nabla_{E_i}^{\perp} \phi , \bH \rangle \, .
\end{align}
It follows that
\begin{align}
	 |\bH|^2 \, |\nabla |\bH||^2 = \frac{1}{4} \, \left|  A^{\bH} (x^T , E_i ) \right|^2 + A^{\bH} (x^T , E_i ) \langle \nabla_{E_i}^{\perp} \phi , \bH \rangle + \left| \langle \nabla_{E_i}^{\perp} \phi , \bH \rangle \right|^2  \, .
\end{align}
Combining this with  \eqr{e:comoo1} gives the second equality.
\end{proof}

\begin{proof}[Proof of Proposition \ref{p:simonsG}]
We will work at a point using an orthonormal frame $E_i$ for the tangent space  where $\nabla^T E_i $ vanishes at this point.
The starting point is the general Simons' identity for $A$ (see, for instance, ($23$) on page $368$ of \cite{AB} where they have the opposite sign convention on $\bH$)
\begin{align}	\label{e:genS}
	\left( \Delta_{\Sigma} A \right) (E_i , E_j) &= -  \sum_{k,m} \langle A_{ij} , A_{km} \rangle \, A_{km}  - \Hess_{\bf{H}} (E_j, E_i) 
	- \sum_m \langle {\bf{H}} , A_{im} \rangle A_{jm}
	  \\
	& +   \sum_{k,m} \left\{  2\, \langle A_{j m} , A_{ik} \rangle \, A_{k m} - \langle A_{km} , A_{ik} \rangle A_{jm} - \langle A_{jk} , A_{km} \rangle A_{im} \right\}   
          \, .  \notag 
       \end{align}
       By the Ricci equations for the curvature of the normal bundle (equation $(15)$ on page $367$ of \cite{AB}), the combination $-  \sum_{k,m} \langle A_{ij} , A_{km} \rangle \, A_{km}  - \Hess_{\bf{H}} (E_j, E_i) $ is symmetric in $i$ and $j$.
      Thus, using \eqr{e:231} in \eqr{e:genS} gives  
\begin{align}	 
	\left( \Delta_{\Sigma} A \right) (E_i , E_j) &=      2\, \langle A_{j m} , A_{ik} \rangle \, A_{k m} - \langle A_{km} , A_{ik} \rangle A_{jm} - \langle A_{jk} , A_{km} \rangle A_{im}     -   \langle A_{ij} , A_{km} \rangle \, A_{km}  \notag \\
	&-   A_{jm}^{\bH}  A_{im} + \Hess_{\phi}  (E_i, E_j) + \frac{1}{2} \,  \left( \nabla_{x^T} A\right)  \left( E_i , E_j \right)  
	+ \frac{1}{2}  A_{ij}   + \frac{1}{2}  A_{im} \,  A_{jm}^{x^{\perp}} 
	          \, .   
       \end{align}
       Bringing the $\nabla_{x^T}A$ term to the left side and combining
       the first and last terms on the second line gives
       \begin{align}	 
	\left( \cL  A \right) (E_i , E_j) &=      2\, \langle A_{j m} , A_{ik} \rangle \, A_{k m} - \langle A_{km} , A_{ik} \rangle A_{jm} - \langle A_{jk} , A_{km} \rangle A_{im}     -   \langle A_{ij} , A_{km} \rangle \, A_{km}  \notag \\
	&+   A_{jm}^{\phi}  A_{im} + \Hess_{\phi}  (E_i, E_j)  
	+ \frac{1}{2}  A_{ij}   
	          \, .   
       \end{align}
The first claim \eqr{e:simonsG1} follows from this since $L = \cL  + \frac{1}{2} + \sum_{k,\ell}  \,  \langle  \cdot   , A_{k \ell}  \rangle \, A_{ k \ell}$.  The second claim follows by taking the trace of \eqr{e:simonsG1}.
\end{proof}

\subsection{The integral estimate for   $\nabla \tau$}

In the next lemma, we will use a second drift Laplacian $\cL_{|\bH|^2}$ (see lemma $4.3$ in \cite{CIM}) given by
\begin{align}
	\cL_{|\bH|^2} = \cL + \nabla_{\nabla \log |\bH |^2} \, .
\end{align}
This operator is self-adjoint with respect to the weight $|\bH|^2 \, \e^{ - f}$.
The   lemma gives an identity for $\cL_{|\bH|^2} \, \tau$ involving the quantity $P$; this identity is the motivation for the definition of $P$.  In the special case where $\Gamma$ is a shrinker and, thus, $\phi \equiv 0$, 
\eqr{e:e328} below becomes
\begin{align}	
	|\bH| \,\langle A ,  \cL_{|\bH|^2} \, \tau \rangle 
	 &=   P  
	  \, .
\end{align}

 \begin{Lem}	\label{l:kappa}
If $\bH \ne 0$, then $\tau = \frac{A}{|\bH|}$ satisfies
\begin{align}	\label{e:e328}
	|\bH| \,\langle A ,  \cL_{|\bH|^2} \, \tau \rangle 
	 &=   P +     A_{jm}^{\phi}  \langle A_{im}, A_{ij} \rangle  +  \langle \Hess_{\phi} , A \rangle + \frac{|A|^2}{|\bH|} \left( \langle \Delta \phi , \bN \rangle
	 + A^{\phi}_{ij} A^{\bN}_{ij} \right) \notag \\
	 &+ \frac{|A|^2}{|\bH|^2} \, \left\{ \Tr \, A^{\bN} (x^T , \cdot ) \langle \nabla_{\cdot}^{\perp} \phi , \bN \rangle 
	+ \left| \langle \nabla^{\perp} \phi , \bN \rangle \right|^2 -   \Tr \, A^{ \nabla_{\cdot}^{\perp}\phi }(x^T , \cdot )    - \left|  \nabla^{\perp} \phi \right|^2   
	 \right\} 
	  \, .
\end{align}
\end{Lem}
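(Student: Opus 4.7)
The plan is to compute $\cL_{|\bH|^2}\tau$ by treating $\tau = A/|\bH|$ as a quotient, then take the inner product with $A$ and insert both general Simons identities. The exact term $P$ is defined precisely so that the quartic contractions emerging from Simons collapse into it.

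\textbf{Step 1: reduce $\cL_{|\bH|^2}\tau$ to $\cL A$ and $\cL|\bH|$.} Using the product-rule computation
\begin{align*}
\cL(A/|\bH|) &= \frac{\cL A}{|\bH|} - \frac{A\,\cL|\bH|}{|\bH|^2} - \frac{2\,\nabla|\bH|\cdot\nabla A}{|\bH|^2} + \frac{2\,|\nabla|\bH||^2 A}{|\bH|^3},
\end{align*}
together with $\nabla\log|\bH|^2 = 2\nabla|\bH|/|\bH|$, the drift term $\nabla_{\nabla\log|\bH|^2}(A/|\bH|)$ cancels the last two terms. So
\begin{align*}
|\bH|\,\cL_{|\bH|^2}\tau = \cL A - \frac{A\,\cL|\bH|}{|\bH|},
\end{align*}
and therefore $|\bH|\langle A,\cL_{|\bH|^2}\tau\rangle = \langle A,\cL A\rangle - \frac{|A|^2\,\cL|\bH|}{|\bH|}$.

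\textbf{Step 2: expand $\langle A,\cL A\rangle$.} Write $\cL A = LA - \tfrac12 A - \sum_{k\ell}\langle A,A_{k\ell}\rangle A_{k\ell}$ and insert Proposition \ref{p:simonsG}. Pairing each term with $A_{ij}$, the two contractions $\sum\langle A_{m\ell},A_{i\ell}\rangle A_{jm}$ and $\sum\langle A_{j\ell},A_{m\ell}\rangle A_{im}$ coincide after relabeling and, using $A_{m\ell}=A_{\ell m}$, give precisely $|A^2|^2$. The result is
\begin{align*}
\langle A,\cL A\rangle = \tfrac12|A|^2 + 2\sum\langle A_{j\ell},A_{ik}\rangle\langle A_{\ell k},A_{ij}\rangle - 2|A^2|^2 - \sum\langle A_{ij},A_{k\ell}\rangle^2 + \langle\Hess_\phi,A\rangle + A^\phi_{jm}\langle A_{im},A_{ij}\rangle.
\end{align*}
Comparing with \eqr{e:definePhere}, the middle four terms equal $P - |A|^2|A^{\bN}|^2 - \frac{|A|^2}{4|\bH|^2}(|A^{\bN}(x^T,\cdot)|^2 - |A(x^T,\cdot)|^2)$.

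\textbf{Step 3: expand $\cL|\bH|/|\bH|$.} Apply the Bochner identity $\cL|\bH|^2 = 2\langle\bH,\cL\bH\rangle + 2|\nabla\bH|^2$ together with $|\bH|\,\cL|\bH| = \tfrac12\cL|\bH|^2 - |\nabla|\bH||^2$, giving
\begin{align*}
\frac{\cL|\bH|}{|\bH|} = \frac{\langle\bH,\cL\bH\rangle}{|\bH|^2} + |\nabla\bN|^2,
\end{align*}
where we used $|\nabla\bH|^2 - |\nabla|\bH||^2 = |\bH|^2|\nabla\bN|^2$. Now \eqr{e:simonsG2} combined with $\cL\bH = L\bH - \tfrac12\bH - \sum A^{\bH}_{k\ell}A_{k\ell}$ yields, after pairing with $\bH$,
\begin{align*}
\frac{|A|^2\,\langle\bH,\cL\bH\rangle}{|\bH|^2} = \tfrac12|A|^2 - \frac{|A|^2\langle\Delta\phi,\bN\rangle}{|\bH|} - \frac{|A|^2}{|\bH|}A^\phi_{im}A^{\bN}_{im} - |A|^2|A^{\bN}|^2.
\end{align*}

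\textbf{Step 4: assemble and invoke Corollary \ref{c:nablabN}.} Subtracting $|A|^2\cL|\bH|/|\bH|$ from $\langle A,\cL A\rangle$, the $\tfrac12|A|^2$ and $|A|^2|A^{\bN}|^2$ terms cancel, leaving $P$, the desired $\phi$-terms, and $-|A|^2|\nabla\bN|^2$. Substituting Corollary \ref{c:nablabN} for $|\bH|^2|\nabla\bN|^2$, the contribution $-\frac{|A|^2}{4|\bH|^2}(|A(x^T,\cdot)|^2 - |A^{\bN}(x^T,\cdot)|^2)$ exactly cancels the remaining $(x^T,\cdot)$-piece inherited from $P$'s definition, and the $\nabla^\perp\phi$ pieces rearrange into the bracketed expression in \eqr{e:e328}. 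This is the identity claimed.

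The only real obstacle is careful bookkeeping: identifying that the two $\sum\langle A_{m\ell},A_{i\ell}\rangle\langle A_{jm},A_{ij}\rangle$-type sums both equal $|A^2|^2$ via symmetry of $A$, and verifying that the $(x^T,\cdot)^2$ terms in the definition of $P$ cancel exactly against those in Corollary \ref{c:nablabN} when multiplied by $|A|^2/|\bH|^2$. There are no new geometric inputs beyond Proposition \ref{p:simonsG} and Corollary \ref{c:nablabN}; the lemma is an algebraic consequence.
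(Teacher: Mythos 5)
Your proposal is correct and follows essentially the same route as the paper: apply the quotient rule for $\cL_{|\bH|^2}$ to reduce to $\cL A$ and $\cL|\bH|$, insert both identities from Proposition \ref{p:simonsG}, pair with $A$, and then substitute Corollary \ref{c:nablabN} for $|\nabla\bH|^2-|\nabla|\bH||^2$. The only cosmetic difference is that you rederive the quotient rule directly from the drift-Laplacian definition, whereas the paper cites lemma $4.3$ of \cite{CIM}; the algebraic bookkeeping, including the identification of the two contractions with $|A^2|^2$ and the exact cancellation of the $(x^T,\cdot)^2$ pieces against those in the definition of $P$, matches the paper's argument.
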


\begin{proof}
The first claim in 
Proposition \ref{p:simonsG}
gives that
 \begin{align}	 
	\langle  \cL  A   , A \rangle &=      2\, \langle A_{j m} , A_{ik} \rangle \, \langle A_{k m} , A_{ij} \rangle - 2 \, |A^2|^2   -   \langle A_{ij} , A_{km} \rangle^2 \notag \\
	&+   A_{jm}^{\phi}  \langle A_{im}, A_{ij} \rangle  + \langle \Hess_{\phi} , A \rangle
	+ \frac{1}{2}  |A|^2 
	          \, .   
       \end{align}
The second claim in Proposition \ref{p:simonsG} gives  $\cL \, \bH =  \frac{1}{2} {\bf{H}} - \Delta \phi -  A_{im}^{\phi}  A_{im} - A^{\bH}_{km} \, A_{km}$ and, thus,
\begin{align}
	\frac{1}{2} \, \cL \, |\bH|^2 = |\nabla \bH |^2 + \langle \cL \bH , \bH \rangle = |\nabla \bH |^2 + \frac{1}{2} \, |\bH|^2
	- \langle \Delta \phi , \bH \rangle -  A_{im}^{\phi}  A_{im}^{\bH} - \left| A^{\bH} \right|^2 \, .
\end{align}
Since $|\bH| \, \cL \, |\bH|  = \frac{1}{2} \, \cL \, |\bH|^2  - |\nabla |\bH| |^2 $, 
it follows that
\begin{align}	\label{e:cLh}
	|\bH| \, \cL \, |\bH|  &=    |\nabla \bH |^2 - |\nabla |\bH| |^2 + \frac{1}{2} \, |\bH|^2
	- \langle \Delta \phi , \bH \rangle -  A_{im}^{\phi}  A_{im}^{\bH} - \left| A^{\bH} \right|^2 \, .  
\end{align}

The quotient rule (lemma $4.3$ in \cite{CIM}) gives that
\begin{align}
	|\bH|^2 \, \cL_{|\bH|^2} \tau = |\bH|^2 \, \cL_{|\bH|^2}  \frac{A}{|\bH|} =   |\bH| \, \cL \, A - A \, \cL \, |\bH| \, .
\end{align}
Taking the inner product with $A$ and using the above formulas for $\cL$ on $A$ and $|\bH|$, we get 
\begin{align}
	|\bH|  \,\langle A  ,  \cL_{|\bH|^2} \tau \rangle  &=        2\,  \langle A_{j m} , A_{ik} \rangle \, \langle A_{k m} , A_{ij} \rangle - 2\,  |A^2|^2   -   \langle A_{ij} , A_{km} \rangle^2 \notag \\
	&+    A_{jm}^{\phi}  \langle A_{im}, A_{ij} \rangle  + \langle \Hess_{\phi}  , A \rangle
	+ \frac{1}{2} \,  |A|^2 \\
	&
	 - \frac{|A|^2}{|\bH|^2} \, \left(  |\nabla \bH |^2 - |\nabla |\bH| |^2 + \frac{1}{2} \, |\bH|^2
	- \langle \Delta \phi , \bH \rangle -  A_{im}^{\phi}  A_{im}^{\bH} - \left| A^{\bH} \right|^2 \right)  \notag  	 \, .
\end{align}
The lemma follows from this and using Corollary \ref{c:nablabN} to rewrite $ |\nabla \bH|^2 - |\nabla |\bH||^2$.
\end{proof}

\begin{proof}[Proof of Theorem \ref{c:kappa}]
   We have
\begin{align}	\label{e:kappadv}
	\e^f \, |\bH|^{-2} \,  \dv \, \left\{ \psi^2 \, |\bH|^2 \e^{-f} \, \nabla |\tau|^2 \,   \right\} &= \psi^2 \,  \cL_{|\bH|^2} \, |\tau|^2 + 2\psi \langle \nabla \psi , \nabla |\tau|^2 \rangle \notag \\
	&=  2\, \frac{\psi^2 }{|\bH|} \,  \langle A , \cL_{|\bH|^2} \,\tau \rangle + 2\, \psi^2 \, |\nabla \tau|^2 + 4 \psi \langle \nabla_{\nabla \psi } \tau , \tau  \rangle \\ 
	&\geq 2\, \frac{\psi^2 }{|\bH|} \,  \langle A , \cL_{|\bH|^2} \,\tau \rangle +   \psi^2 \, |\nabla \tau|^2 -  4\, |\nabla \psi|^2 \,   |\tau|^2   
	\notag \,  ,
\end{align}
where the last inequality used the absorbing inequality $4ab \leq a^2 + 4b^2$. 
Lemma \ref{l:kappa}
gives  
\begin{align}	\label{e:csiA}
	|\bH| \,\langle A ,  \cL_{|\bH|^2} \, \tau \rangle 
	 &=   P +     A_{jm}^{\phi}  \langle A_{im}, A_{ij} \rangle  +  \langle \Hess_{\phi} , A \rangle + \frac{|A|^2}{|\bH|} \left( \langle \Delta \phi , \bN \rangle
	 + A^{\phi}_{ij} A^{\bN}_{ij} \right) \notag \\
	 &+ \frac{|A|^2}{|\bH|^2} \, \left\{ \Tr \, A^{\bN} (x^T , \cdot ) \langle \nabla_{\cdot}^{\perp} \phi , \bN \rangle 
	+ \left| \langle \nabla^{\perp} \phi , \bN \rangle \right|^2 -   \Tr \, A^{ \nabla_{\cdot}^{\perp}\phi }(x^T , \cdot )    - \left|  \nabla^{\perp} \phi \right|^2   
	 \right\} \\
	 &\geq P -  C \, \left(|A| + \frac{|A|^2}{|\bH|} \right) \,\left( |A|^2 \, |\phi| +  \left| \Hess_{\phi} \right| \right) - C \, \frac{|A|^2\, |\nabla^{\perp} \phi|}{|\bH|^2} \left( |A(x^T , \cdot)| + |\nabla^{\perp} \phi| \right)
	  \, .  \notag
\end{align}
Finally, the divergence theorem, \eqr{e:kappadv} and \eqr{e:csiA} give \eqr{e:ckappa}.
\end{proof}

 \section{Jacobi fields on a cylinder}		\label{s:sectionJ}
 
Throughout this section $\Sigma =  \SS^k_{\sqrt{2k}} \times \RR^{n-k} \subset \RR^N$, for some $k \geq 1$, is a fixed cylinder.   
Let $\theta$ be coordinates on the sphere $\SS^k$, $y_i$ be coordinates on the axis
$ \RR^{n-k}$, and $z_{\alpha}$ be coordinates on the remaining Euclidean directions $\RR^{N-n-1}$.  
 We will use that the first non-zero eigenvalue of the Laplacian on $\SS^k_{\sqrt{2k}}$ is $\frac{1}{2}$, the $\frac{1}{2}$-eigenspace is spanned by the coordinate functions $x_i$ for $i= 1 , \dots , k+1$, and
 the next eigenvalue is strictly greater than one.

\vskip1mm
The next proposition identifies the Jacobi fields on $\Sigma$. In the proposition and the corollary that follows, $V$ is a normal vector field on $\Sigma$.

\begin{Pro}	\label{l:jacobi}
If     $L \, V = 0$ and    $\|  V \|_{W^{2,2}} < \infty$, then
\begin{align}	
	V = \left\{ \sum_j   y_j \, f_j (\theta) + \sum_{i \leq j} b_{ij} (y_i y_j - 2 \, \delta_{ij}) \right\} \, \bN + \sum_{\alpha} \left\{  f^{\alpha}(\theta) + \sum_j  a_j^{\alpha} \, y_j
	\right\} \, \partial_{z_{\alpha}}
	\, ,
\end{align}
where each $f_j$ and $f^{\alpha}$ is a $\frac{1}{2}$-eigenfunction on $\SS^k$ and $ a^{\alpha}_j , b_{ij} \in \RR$.
\end{Pro}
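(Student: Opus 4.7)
The plan is to exploit the product structure of $\Sigma$ and the fact that the normal bundle of the cylinder admits a global parallel orthonormal frame. Concretely, I would use $\{\bN, \partial_{z_1}, \dots, \partial_{z_{N-n-1}}\}$: the $\partial_{z_\alpha}$ are constant vectors in $\RR^N$, and $\bN$ is radial in the sphere factor, hence normal to $\SS^k$ inside $\RR^{k+1}$, so both $\nabla^\perp \bN = 0$ and $\nabla^\perp \partial_{z_\alpha} = 0$. Write $V = u\bN + \sum_\alpha w^\alpha \partial_{z_\alpha}$. Because the frame is parallel, $\cL$ acts componentwise: $\cL(u\bN) = (\cL u)\bN$ and $\cL(w^\alpha\partial_{z_\alpha}) = (\cL w^\alpha)\partial_{z_\alpha}$.

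Next I would compute the Simons-type term $\sum_{k,\ell}\langle V, A_{k\ell}\rangle A_{k\ell}$ on the cylinder. Since $A$ lives entirely in the $\bN$-direction with $A^{\bN}_{ij} = -(1/\sqrt{2k})\,g^{\SS^k}_{ij}$ on sphere directions and vanishes otherwise, one has $|A|^2 = 1/2$, and a short calculation shows the term equals $(u/2)\bN$. This yields the key decoupling
\begin{align*}
L(u\bN) = (\cL u + u)\,\bN, \qquad L(w^\alpha \partial_{z_\alpha}) = \bigl(\cL w^\alpha + \tfrac{1}{2} w^\alpha\bigr)\,\partial_{z_\alpha},
\end{align*}
so that $LV = 0$ is equivalent to the scalar system $\cL u = -u$ and $\cL w^\alpha = -\tfrac{1}{2} w^\alpha$ for each $\alpha$.

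Since $x^T = \sum_j y_j \partial_{y_j}$, the drift Laplacian on $\Sigma$ splits as the commuting sum $\cL = \Delta_{\SS^k_{\sqrt{2k}}} + (\Delta_y - \tfrac{1}{2}\, y \cdot \nabla_y)$, an Ornstein--Uhlenbeck operator on $\RR^{n-k}$. Both pieces are self-adjoint in the Gaussian $L^2$ and have purely discrete spectrum (the sphere is compact; Hermite polynomials diagonalize the OU operator on $L^2(\RR^{n-k}, \e^{-|y|^2/4})$). Their product eigenfunctions $Y_m(\theta)\, h_J(y)$ form a complete orthonormal basis with eigenvalues $-\mu_m - |J|/2$, where $\mu_0 = 0$, $\mu_1 = 1/2$, $\mu_2 = (k+1)/k > 1$, and $\mu_m$ is strictly increasing in $m$. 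The Gaussian $W^{2,2}$ hypothesis is what legitimizes expanding $u$ and each $w^\alpha$ in this basis and excludes the non-polynomial, exponentially growing eigenfunctions that the elliptic problem admits on all of $\RR^{n-k}$.

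Finally, I would enumerate: $\cL u = -u$ forces $\mu_m + |J|/2 = 1$, giving exactly $(m,|J|) \in \{(0,2),(1,1)\}$ since $(2,0)$ is excluded by $\mu_2 > 1$; these contribute $\sum_{i \leq j} b_{ij}(y_i y_j - 2\delta_{ij})$ and $\sum_j y_j f_j(\theta)$ with each $f_j$ a $\tfrac{1}{2}$-eigenfunction on $\SS^k$. Similarly $\cL w^\alpha = -w^\alpha/2$ forces $(m,|J|) \in \{(0,1),(1,0)\}$, producing $\sum_j a_j^\alpha y_j + f^\alpha(\theta)$. Assembling gives the asserted formula. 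The main subtlety is the spectral step on the noncompact factor: checking that the $W^{2,2}$ bound really rules out exotic eigenfunctions and reduces the problem to the finite-dimensional eigenspaces indexed above. The sharp spectral gap $\mu_1 = 1/2 < 1 < \mu_2$ on $\SS^k_{\sqrt{2k}}$ is exactly what keeps the list of resonances short.
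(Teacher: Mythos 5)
Your proposal is correct and follows essentially the same route as the paper: decompose $V$ in the parallel normal frame $\{\bN,\partial_{z_\alpha}\}$, use $L = \cL + \tfrac12 + \tfrac12\Pi_{\bN}$ on the cylinder to reduce $LV=0$ to the decoupled scalar equations $(\cL+1)V^0 = 0$ and $(\cL+\tfrac12)V^\alpha = 0$, and then classify the $W^{2,2}$ eigenfunctions. The only difference is that where the paper cites lemma $3.26$ of \cite{CM3} for the scalar classification, you spell out the underlying separation of variables (spherical harmonics times Hermite polynomials, with the Gaussian $W^{2,2}$ bound ruling out non-polynomial modes on the $\RR^{n-k}$ factor and the spectral gap $\tfrac12 < 1 < \tfrac{k+1}{k}$ keeping the resonance list finite), which is exactly the content of the cited lemma.
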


\begin{proof}
The operator $L$ on $\Sigma$ becomes
\begin{align}	\label{e:2p3}
	L = \cL + \frac{1}{2} + \frac{1}{2} \, \Pi_{\bN} \, , 
\end{align}
where $\Pi_{\bN}$ is orthogonal projection onto the principal normal $\bN$.

The normal space to $\Sigma$ is spanned by   $N-n$ (pointwise) orthonormal parallel   vector fields
\begin{align}
	\{ \bN , \partial_{z_{\alpha}} \,  | \, \alpha = 1 , \dots , N-n-1 \} \, .
\end{align}
If we decompose $V$ into $V= V^0 \, \bN + \sum_{\alpha} V^{\alpha} \, \partial_{z_{\alpha}}$, then this and \eqr{e:2p3} give
\begin{align}	\label{e:decoV}
	L \, V = \left\{ (\cL + 1) V^0   \right\} \, \bN + \sum_{\alpha}  \left\{ (\cL + \frac{1}{2}) V^{\alpha}   \right\} \,  \partial_{z_{\alpha}} \, .
\end{align}
In particular, we see that $V^0$ is an eigenfunction with eigenvalue $1$ for $\cL$, while the $V^{\alpha}$'s are eigenfunctions with eigenvalue $\frac{1}{2}$.  It follows from lemma $3.26$ in \cite{CM3} (and its proof{\footnote{Lemma $3.26$ in \cite{CM3} deals with eigenvalue $1$; obvious modifications give eigenvalue $\frac{1}{2}$ as well.}}) that:
\begin{itemize}
\item If $u \in W^{2,2}$ has $\cL u = - \frac{1}{2} \, u$, then $u = f(\theta) + \sum a_j \, y_j$ where $a_j \in \RR$ and $f$ is an eigenfunction on $\SS^k$ with eigenvalue $\frac{1}{2}$.
\item  If $u \in W^{2,2}$ has $\cL u = -  u$, then $u =  \sum   y_j \, f_j (\theta) + \sum_{i \leq j} b_{ij} (y_i y_j - 2 \, \delta_{ij})$ where $b_{ij} \in \RR$ and each $f_j$ is an eigenfunction on $\SS^k$ with eigenvalue $\frac{1}{2}$.
\end{itemize}
The proposition follows.
\end{proof}

\begin{Cor}	\label{c:jacobi}
If     $L \, V = 0$ and    $\|  V \|_{W^{2,2}} < \infty$, then there exist $b_{ij} \in \RR$ and a rotation   $\bar{V}$ so that
\begin{align}	
	V = \bar{V}^{\perp} + \left\{  \sum_{i \leq j} b_{ij} (y_i y_j - 2 \, \delta_{ij}) \right\} \, \bN 	\, .
\end{align}
\end{Cor}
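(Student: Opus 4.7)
The plan is to identify each piece of the classification in Proposition \ref{l:jacobi}, apart from the residual $b_{ij}(y_iy_j-2\delta_{ij})\bN$ term, with the normal restriction along $\Sigma$ of a Killing generator of $\RR^N$.

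First I would split the ambient space as $\RR^N = \RR^{k+1}_x \oplus \RR^{n-k}_y \oplus \RR^{N-n-1}_z$ so that $\Sigma = \{|x|=\sqrt{2k},\ z=0\}$ and $\bN = -x/\sqrt{2k}$ along the spherical factor. Under this splitting, the Lie algebra $\mathfrak{so}(N)$ decomposes as three diagonal subalgebras $\mathfrak{so}(k+1)\oplus\mathfrak{so}(n-k)\oplus\mathfrak{so}(N-n-1)$ together with the three off-diagonal tensor-product blocks. The three diagonal blocks contribute nothing to $\bar V^\perp$ along $\Sigma$: the first two preserve $\Sigma$ setwise and are tangent to it, while the third vanishes pointwise on $\Sigma$ because $z=0$ there.

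Next I would compute $\bar V^\perp|_\Sigma$ for a typical generator in each off-diagonal block. For $\bar V = x_i\partial_{y_j}-y_j\partial_{x_i}$, only $-y_j\partial_{x_i}$ has a nonzero normal component, and one obtains $\bar V^\perp = \tfrac{y_j\,x_i|_{\SS^k}}{\sqrt{2k}}\,\bN$. For $\bar V = x_i\partial_{z_\alpha}-z_\alpha\partial_{x_i}$, the restriction to $\{z=0\}$ leaves $x_i\partial_{z_\alpha}$, which is already normal. For $\bar V = y_j\partial_{z_\alpha}-z_\alpha\partial_{y_j}$, the restriction leaves $y_j\partial_{z_\alpha}$, again normal. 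Using that $x_1|_{\SS^k},\ldots,x_{k+1}|_{\SS^k}$ span the $\tfrac12$-eigenspace of $\Delta_{\SS^k_{\sqrt{2k}}}$ (a fact already invoked in Proposition \ref{l:jacobi}), varying the generator in each off-diagonal block sweeps out precisely the three Jacobi-field families $\sum_j y_j f_j(\theta)\,\bN$, $\sum_\alpha f^\alpha(\theta)\,\partial_{z_\alpha}$, and $\sum_{\alpha,j}a_j^\alpha y_j\,\partial_{z_\alpha}$ of Proposition \ref{l:jacobi}.

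Finally, the dimension sum $(k+1)(n-k)+(k+1)(N-n-1)+(n-k)(N-n-1)$ matches the total dimension of those three families, so the map sending an off-diagonal rotation to its normal restriction along $\Sigma$ is a linear isomorphism onto them; a unique $\bar V \in \mathfrak{so}(N)$ can therefore be chosen so that $V - \bar V^\perp = \bigl\{\sum_{i\leq j}b_{ij}(y_iy_j-2\delta_{ij})\bigr\}\bN$, which is the claimed form. I do not anticipate a serious obstacle: the content beyond bookkeeping is the identification of the first eigenspace of $\SS^k_{\sqrt{2k}}$ with restrictions of ambient coordinates, which is standard spherical harmonic theory already used in the proof of Proposition \ref{l:jacobi}.
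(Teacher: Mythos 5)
Your proof is correct and follows essentially the same route as the paper: split $\RR^N$ into spherical, axis, and $z$ blocks, compute the normal restrictions along $\Sigma$ of the rotation generators $x_i\partial_{x_j}-x_j\partial_{x_i}$, observe that the three diagonal blocks contribute nothing, and identify the three off-diagonal blocks with the first, third, and fourth families from Proposition \ref{l:jacobi}. The closing dimension count is a nice explicit check, though only surjectivity of the normal-restriction map onto those families is actually needed to conclude.
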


\begin{proof}
The rotation in the $x_i-x_j$ plane is given by $x_i \partial_{x_j} - x_j \partial_{x_i}$.  The normal part of this is
\begin{align}
	\bar{V}^{\perp}_{x_i , x_j} = x_i \partial_{x_j}^{\perp} - x_j \partial_{x_i}^{\perp} \, .
\end{align}
There are six cases to consider, depending on whether $x_i , x_j$ are spherical coordinates, axis coordinates, or in the $z_{\alpha}$'s.  Three of these are zero (when both are spherical, both are along the axis, 
or both are in the $z_{\alpha}$ directions).  The remaining three cases are:
\begin{itemize}
\item If $x_i$ is in $\RR^{k+1}$ and $x_j = y_j$, then $\bar{V}^{\perp}_{x_i , y_j} = - y_j f_i (\theta) \, \bN$.
\item If $x_i$ is in $\RR^{k+1}$ and $x_j = z_j$, then $\bar{V}^{\perp}_{x_i , z_j} =  f_i (\theta) \, \partial_{z_j}$.
\item If $x_i = y_i$ and $x_j = z_j$, then $\bar{V}^{\perp}_{y_i , z_j} =  y_i \, \partial_{z_j}$.
\end{itemize}
Linear combinations of these span the first, third and fourth terms in the expression for $V$ in Proposition \ref{l:jacobi}, giving the corollary.
\end{proof}

\subsection{An effective decomposition}

Let $\Lambda_{\frac{1}{2}}$ be the linear space of $\frac{1}{2}$ eigenfuctions on $\SS^k_{\sqrt{2k}}$.

\begin{Lem}	\label{l:eigenA}
There exists $C$ so that 
if $u$ is a function on $\Sigma$ with $\| u \|_{W^{2,2}} < \infty$, then:
\begin{itemize}
\item  There exists $f(\theta) \in \Lambda_{\frac{1}{2}}$ and constants $a_j$ with 
\begin{align}
	\left\| u - f - \sum_j a_j y_j  \right\|_{W^{2,2}} \leq C \, \| (\cL + \frac{1}{2}) u \|_{L^2} \, .
\end{align}
\item There exist $f_j (\theta) \in  \Lambda_{\frac{1}{2}}$ and constants $b_{ij}$ with 
\begin{align}
	\left\| u - \sum_j f_j y_j - \sum_{i\leq j} b_{ij} (y_i y_j - \delta_{ij}) \right\|_{W^{2,2}} \leq C \, \| (\cL +1) u \|_{L^2} \, .
\end{align}
\end{itemize}
\end{Lem}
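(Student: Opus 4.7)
The approach is spectral decomposition of $\cL$ on the cylinder $\Sigma = \SS^k_{\sqrt{2k}} \times \RR^{n-k}$, combined with Gaussian elliptic regularity to pass from $L^2$ to $W^{2,2}$. Since the position vector has vanishing tangential component on the sphere factor and equals $y$ on the Euclidean factor, the weight factors as $\e^{-f} = \e^{-k/2}\,\e^{-|y|^2/4}$ and the drift operator splits as $\cL = \Delta_{\SS^k_{\sqrt{2k}}} + \cL_{\RR^{n-k}}^{\mathrm{OU}}$. The spectrum of $-\cL$ is then the Minkowski sum of the sphere spectrum $\{0,\frac{1}{2},\frac{k+1}{k},\dots\}$ (with the gap recorded in the excerpt) and the Ornstein--Uhlenbeck spectrum $\{0,\frac{1}{2},1,\frac{3}{2},\dots\}$. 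In particular the eigenvalues at or below $1$ are exactly $\{0,\frac{1}{2},1\}$, with $\frac{1}{2}$-eigenspace spanned by $\Lambda_{\frac{1}{2}} \cup \{y_j\}$ and $1$-eigenspace spanned by $\{y_j f(\theta) : f \in \Lambda_{\frac{1}{2}}\} \cup \{y_i y_j - 2\delta_{ij}\}$, so the $\delta_{ij}$ in the statement appears to be a typo for $2\delta_{ij}$.

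For the first bullet I would take $f + \sum_j a_j y_j$ to be the Gaussian $L^2$-orthogonal projection of $u$ onto the $\frac{1}{2}$-eigenspace, and set $w = u - f - \sum_j a_j y_j$. Then $(\cL + \frac{1}{2})w = (\cL + \frac{1}{2})u$ and the spectral components $w_\mu$ of $w$ are supported on eigenvalues $\mu \in \{0\} \cup [1,\infty)$, each satisfying $|\mu - \frac{1}{2}| \geq \frac{1}{2}$. Orthogonality in the Gaussian inner product yields
\begin{align*}
\| w \|_{L^2}^2 \, \leq \, 4 \sum_\mu \left( \mu - \tfrac{1}{2} \right)^2 \| w_\mu \|_{L^2}^2 \, = \, 4 \, \| (\cL + \tfrac{1}{2}) u \|_{L^2}^2 ,
\end{align*}
and similarly $\| \cL w \|_{L^2}^2 = \sum \mu^2 \| w_\mu \|_{L^2}^2 \lesssim \| (\cL + \frac{1}{2}) u \|_{L^2}^2$, since $\sup_{\mu \in \{0\} \cup [1,\infty)} \mu^2/(\mu - \frac{1}{2})^2$ is finite. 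The standard Gaussian elliptic bound $\| w \|_{W^{2,2}}^2 \lesssim \| \cL w \|_{L^2}^2 + \| w \|_{L^2}^2$ on the (bounded-geometry) cylinder then delivers the $W^{2,2}$ estimate. The second bullet follows in precisely the same way with $\frac{1}{2}$ replaced by $1$: the subtracted term is the Gaussian projection of $u$ onto the $1$-eigenspace, and the relevant spectral gap $\inf_{\mu \neq 1} |\mu - 1| = \min(\frac{1}{2}, \frac{1}{k})$ is positive for each fixed $k$ (so the constant will depend on $k$).

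The main obstacle I expect is justifying the Gaussian elliptic regularity bound on the cylinder. The cleanest route is separation of variables: on the sphere factor apply standard elliptic regularity, and on the Euclidean factor use a Bochner identity for the Ornstein--Uhlenbeck operator to control the Gaussian $L^2$ norm of the Hessian by that of $\cL_{\RR^{n-k}}^{\mathrm{OU}}$. Alternatively, one can bypass the elliptic bound entirely by working in the spectral basis and comparing $(1+\mu^2)$ against $(\mu - \frac{1}{2})^2$ (respectively $(\mu-1)^2$) term by term on the relevant portion of the spectrum, using an equivalent quadratic-form definition of the Gaussian $W^{2,2}$ norm. Either way, this is essentially the same machinery that underlies lemma $3.26$ of \cite{CM3} invoked in the proof of Proposition \ref{l:jacobi}, and its effective version here should be a direct adaptation.
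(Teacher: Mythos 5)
Your spectral-decomposition argument is correct, and since the paper simply cites lemma $3.2$ of \cite{CM3} as ``standard,'' it is the same route: discreteness of the combined sphere/Ornstein--Uhlenbeck spectrum for $-\cL$ on $\SS^k_{\sqrt{2k}}\times\RR^{n-k}$, the spectral gap at $\frac{1}{2}$ (resp.\ $1$), and a Gaussian elliptic estimate of the type already used elsewhere in the paper (e.g.\ via the drift Bochner formula in the proof of Lemma~\ref{l:revHolder}) to upgrade the $L^2$ control of $w$ and $\cL w$ into a $W^{2,2}$ bound. Your observation that the $\delta_{ij}$ in the second bullet should read $2\delta_{ij}$, consistent with Proposition~\ref{l:jacobi} and with the degree-two Hermite eigenfunction $y_i^2-2$ of the Ornstein--Uhlenbeck operator, is also accurate.
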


\begin{proof}
This is  standard; see, for instance, lemma  $3.2$   in \cite{CM3}.
\end{proof}

\begin{Cor}	\label{c:eff}
There exists $C$ so that if $V$ is normal with $\| V\|_{W^{2,2}} < \infty$, then the $L^2$-orthogonal projection $J$ of $V$ onto the  Jacobi fields described in  Proposition \ref{l:jacobi}  satisfies
\begin{align}
	\| V - J \|_{W^{2,2}} \leq C \, \| L \, V \|_{L^2} \, .
\end{align}
\end{Cor}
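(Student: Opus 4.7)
The plan is to follow the qualitative proof of Proposition \ref{l:jacobi} using the quantitative scalar estimates from Lemma \ref{l:eigenA}, and then pass from the approximate Jacobi field thereby produced to the true $L^2$-orthogonal projection by a minimization step.

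First I decompose $V$ in the pointwise orthonormal parallel frame $\{\bN, \partial_{z_\alpha}\}$ for the normal bundle, writing $V = V^0\,\bN + \sum_\alpha V^\alpha\,\partial_{z_\alpha}$. Since the frame is parallel, $\|V\|_{W^{2,2}}^2 = \|V^0\|_{W^{2,2}}^2 + \sum_\alpha \|V^\alpha\|_{W^{2,2}}^2$, so each scalar coefficient lies in $W^{2,2}$; moreover, by \eqr{e:decoV},
\[
\|LV\|_{L^2}^2 = \|(\cL+1)V^0\|_{L^2}^2 + \sum_\alpha \|(\cL+\tfrac{1}{2})V^\alpha\|_{L^2}^2.
\]
Applying the second bullet of Lemma \ref{l:eigenA} to $V^0$ produces $J^0 = \sum_j f_j(\theta)\,y_j + \sum_{i\le j} b_{ij}(y_iy_j - \delta_{ij})$ lying in $\ker(\cL+1)$ with $\|V^0 - J^0\|_{W^{2,2}} \le C\|(\cL+1)V^0\|_{L^2}$, and the first bullet applied to each $V^\alpha$ produces $J^\alpha = f^\alpha(\theta) + \sum_j a_j^\alpha y_j$ in $\ker(\cL+\tfrac{1}{2})$ with the analogous bound. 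Assembling these, the normal field $\tilde J = J^0\,\bN + \sum_\alpha J^\alpha\,\partial_{z_\alpha}$ is a Jacobi field of $L$ of the form in Proposition \ref{l:jacobi}, and summing the componentwise estimates gives $\|V - \tilde J\|_{W^{2,2}} \le C\,\|LV\|_{L^2}$.

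It remains to replace $\tilde J$ by the actual orthogonal projection $J$. Because $\ker L$ is closed in Gaussian $L^2$ (it is the kernel of a self-adjoint operator) and $J$ minimizes the distance from $V$ to $\ker L$, the triangle inequality gives $\|\tilde J - J\|_{L^2} \le 2\|V - \tilde J\|_{L^2} \le 2C\,\|LV\|_{L^2}$. The Jacobi fields in Proposition \ref{l:jacobi} decompose as orthogonal sums of explicit atoms -- polynomials in $y$ of degree $\le 2$ times sphere eigenfunctions of eigenvalue $\tfrac{1}{2}$ -- and each such atom has a universally bounded $W^{2,2}$-to-$L^2$ ratio in the Gaussian inner product; Fubini-type orthogonality across distinct atoms then produces a constant $C_0$ with $\|K\|_{W^{2,2}} \le C_0\,\|K\|_{L^2}$ for every $K \in \ker L \cap W^{2,2}$. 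Applying this to $K = \tilde J - J$ and the triangle inequality $V - J = (V - \tilde J) + (\tilde J - J)$ yields the claim.

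The main obstacle is this last norm-equivalence on $\ker L$: the Jacobi space is infinite-dimensional, so the upgrade from $L^2$-closeness of the approximation to $W^{2,2}$-closeness is not automatic from compactness and must be extracted from the explicit orthogonal polynomial structure of the classification in Proposition \ref{l:jacobi}.
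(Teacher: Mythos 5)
Your first half is exactly the paper's one‑line proof: decompose $V = V^0\,\bN + \sum_\alpha V^\alpha \partial_{z_\alpha}$ in the parallel frame, use \eqr{e:decoV} to split $\|LV\|_{L^2}^2$ into its scalar pieces, and apply the two bullets of Lemma \ref{l:eigenA} componentwise. That part is correct and matches the intended argument.

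The second half — replacing the approximant $\tilde J$ by the true $L^2$-orthogonal projection $J$ — is a reasonable addition if one does not already know that the approximant produced by Lemma \ref{l:eigenA} (via the standard spectral proof of lemma $3.2$ in \cite{CM3}) is in fact the orthogonal projection. The triangle-inequality step $\|\tilde J - J\|_{L^2} \le 2\|V - \tilde J\|_{L^2}$ is fine. But your closing remark contains a genuine misconception: the space of $W^{2,2}$ Jacobi fields on the cylinder is \emph{finite}-dimensional, not infinite-dimensional. Proposition \ref{l:jacobi} parameterizes it by finitely many constants $a_j^\alpha, b_{ij}$ together with finitely many elements $f_j, f^\alpha$ of the $(k+1)$-dimensional space $\Lambda_{1/2}$, since the sphere factor $\SS^k_{\sqrt{2k}}$ is compact. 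So the norm equivalence $\|K\|_{W^{2,2}} \le C_0 \|K\|_{L^2}$ on $\ker L \cap W^{2,2}$ is automatic (any two norms on a finite-dimensional space are equivalent), and what you flag as ``the main obstacle'' is no obstacle at all. Your ``atoms plus Fubini-type orthogonality'' argument is unnecessary and, in the form you state it, also unjustified — you would need to verify that distinct atoms are orthogonal in the $W^{2,2}$ inner product, not merely in $L^2$, which you do not check. Fortunately the conclusion holds for the simpler reason above, so the proof goes through; but you should delete the claim of infinite-dimensionality and replace the closing paragraph with the one-line observation that finite-dimensionality gives the equivalence (or, better, observe that the approximant of Lemma \ref{l:eigenA} is already the orthogonal projection and skip the replacement step entirely, as the paper does).
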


\begin{proof}
This follows immediately from \eqr{e:decoV} and Lemma \ref{l:eigenA}.

\end{proof}

We will later need that on the space of Jacobi fields, the pointwise norms can be bounded in terms of the $L^2$ bound on a fixed ball as follows:

\begin{Lem}	\label{l:Jbound}
There exists $C$ depending on $N$ so that if $J$ is a Jacobi field as in Proposition \ref{l:jacobi}, then at $x$
\begin{align}
	|J|   &\leq C \, (1+ |x|^2) \, \| J \|_{L^2 (B_{\sqrt{2n}+1})} \,  , \\
	  |\nabla  J|  +   |\Hess_J|  &\leq C \, (1+ |x|) \, \| J \|_{L^2 (B_{\sqrt{2n}+1})} \, , \\
	  \left| \Hess_J ( \cdot , \RR^{n-k}) \right| &\leq C \,  \| J \|_{L^2 (B_{\sqrt{2n}+1})} \, .
\end{align}
\end{Lem}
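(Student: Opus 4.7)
My plan is to exploit the explicit polynomial form of $J$ given by Proposition \ref{l:jacobi}, derive pointwise estimates in terms of the coefficient data, and then control that data by the $L^2$ norm on $B_{\sqrt{2n}+1}$ using finite-dimensionality. On $\Sigma=\SS^k_{\sqrt{2k}}\times\RR^{n-k}$ the frame $\{\bN,\partial_{z_\alpha}\}$ is parallel in the normal bundle, so for $J=u\,\bN+\sum_\alpha v^\alpha\,\partial_{z_\alpha}$ the normal covariant derivatives $\nabla^\perp J$ and $\Hess_J$ reduce to the intrinsic gradient and Hessian of the scalar components $u,v^\alpha$, and I may work entirely at the level of these scalar functions.

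First I would introduce a coefficient seminorm
\begin{align*}
Q(J):=\sum_j\|f_j\|_{C^2(\SS^k)}+\sum_\alpha\|f^\alpha\|_{C^2(\SS^k)}+\sum_{\alpha,j}|a_j^\alpha|+\sum_{i\leq j}|b_{ij}|.
\end{align*}
From $u(\theta,y)=\sum_j y_jf_j(\theta)+\sum_{i\leq j}b_{ij}(y_iy_j-2\delta_{ij})$ and $v^\alpha(\theta,y)=f^\alpha(\theta)+\sum_ja_j^\alpha y_j$, direct inspection yields $|J|(x)\lesssim(1+|y|^2)\,Q(J)$ and $|\nabla J|(x)+|\Hess_J|(x)\lesssim(1+|y|)\,Q(J)$. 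For the $\RR^{n-k}$-directional Hessian, each $\partial_{y_k}u$ is affine in $y$ with $\theta$-dependence only through $f_k$, and each $\partial_{y_k}v^\alpha$ is constant; since $\RR^{n-k}$ is a flat factor of the product metric and commutes with sphere-tangent fields that are independent of $y$, one more derivative gives $|\Hess_J(\partial_{y_k},\cdot)|\lesssim Q(J)$ with no $|y|$-growth. Using $|y|\leq|x|$ then yields the three pointwise inequalities with $Q(J)$ in place of the $L^2$ norm on the right-hand side.

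The second step is to replace $Q(J)$ by $\|J\|_{L^2(B_{\sqrt{2n}+1})}$ via finite-dimensionality. The data $(f_j,f^\alpha,a_j^\alpha,b_{ij})$ live in a finite-dimensional vector space: $\Lambda_{1/2}$ has dimension $k+1$ (spanned by coordinate functions on $\SS^k_{\sqrt{2k}}$), so the Jacobi field space of Proposition \ref{l:jacobi} is itself finite-dimensional of dimension depending only on $N$. Moreover $B_{\sqrt{2n}+1}\cap\Sigma\supset\SS^k_{\sqrt{2k}}\times\{|y|\leq 1\}$, since a point there has $|x|^2=2k+|y|^2\leq 2n+1<(\sqrt{2n}+1)^2$. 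Hence $\|\cdot\|_{L^2(B_{\sqrt{2n}+1})}$ is nondegenerate on the Jacobi field space: a Jacobi field vanishing on $\SS^k_{\sqrt{2k}}\times\{|y|\leq 1\}$ must have all of $f_j,f^\alpha,a_j^\alpha,b_{ij}$ equal to zero, since the explicit expression is polynomial in $y$ whose coefficients can be read off fiber by fiber. Equivalence of norms on a finite-dimensional space gives $Q(J)\leq C\,\|J\|_{L^2(B_{\sqrt{2n}+1})}$, which combined with the first step completes the proof.

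The main obstacle is not analytic but structural: tracking which terms in $u$ and $v^\alpha$ survive a $\RR^{n-k}$-derivative, so that the improved last bound — with no $|x|$-factor — actually holds. This comes from the quadratic-in-$y$ form of the $\bN$-component together with the affine-in-$y$ form of the $\partial_{z_\alpha}$-components, and from the product structure of the cylinder metric which makes intrinsic $y$-derivatives commute with sphere-tangent operators.
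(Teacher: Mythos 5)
Your proposal is correct and takes essentially the same route as the paper: read off the explicit polynomial form from Proposition \ref{l:jacobi}, observe that the $\bN$-component is quadratic in $y$ while the $\partial_{z_\alpha}$-components are affine in $y$ (so the mixed $\RR^{n-k}$-Hessian has no growth), and then use finite-dimensionality plus injectivity of restriction to $B_{\sqrt{2n}+1}\cap\Sigma$ to convert coefficient control into an $L^2$ bound. The paper states these points in a single short paragraph; you have simply made the same argument explicit, including the verification that $\SS^k_{\sqrt{2k}}\times\{|y|\le 1\}\subset B_{\sqrt{2n}+1}$ and the fiber-by-fiber reading of coefficients for injectivity.
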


\begin{proof}
To see this, note that there is a finite-dimensional basis of Jacobi fields, each of which grows at most quadratically.  Moreover, the restriction of these to $B_{\sqrt{2n}+1}$ is injective and, thus, $L^2 (B_{\sqrt{2n}+1})$ is a norm on this space.  For the last claim, note that the Jacobi fields are polynomials in the $\RR^{n-k}$ variables, either of degree at most one (possibly times a function of $\theta$) or of degree two with no $\theta$ dependence.
\end{proof}

\section{Classification}	\label{s:class}

The tensor $\tau$ is parallel on a cylinder.  The main result of this section, Proposition \ref{p:Lojae1A},    bounds the distance to a cylinder in terms of $\nabla \tau$ and $\phi$, 
{\emph{as long as it is close to a cylinder on some fixed large ball}}.  This is a quantitative almost rigidity for the cylinder.   
In codimension one, a shrinker with $\nabla \tau = 0$ must be a sphere, cylinder or $\gamma \times \RR^{n-1}$ for an Abresch-Langer curve $\gamma$.  A quantitative form of this classification was used  in \cite{CM3} to bound the distance to cylinders for hypersurfaces, but  that argument relied   on having only one normal direction.  In fact,  there are non-cylindrical shrinkers with $\nabla \tau = 0$ in higher codimension.

\vskip1mm
 A key  will be to understand the structure of the symmetric
   real-valued tensor $\tau^{\bN} = \langle \tau , \bN \rangle$.  The eigenvalues{\footnote{By convention, an operator $T$ has eigenvalue $\lambda$ if there is a nonzero vector $x$ with $Tx+ \lambda x =0$.}} of $\tau^{\bN}$ on $\SS^k_{\sqrt{2k}} \times \RR^{n-k}$ are
 $0$ and $\frac{1}{k}$ with multiplicities $(n-k)$ and $k$, respectively.
The norm $|\bH|$   is constant on a cylinder.  The next lemma shows that   $\nabla |\bH|$ and $\Hess_{|\bH|}$ are almost eigenvectors of $\tau^{\bN}$ with eigenvalue one.  Thus, if $|\bH|$ is not constant (and $k>1$), then we  get eigenvalues of $\tau^{\bN}$  that are   far from those of the cylinder.

\begin{Lem}	\label{l:A1}
There exists $C$ depending on $n$ so that if  $\bH \ne 0$, then 
\begin{align}
	\left| \tau^{\bN} (\nabla |\bH|) + \nabla |\bH|  \right| &\leq  n \, |\bH| \, |\nabla \tau |  \, , \\
	\left| \tau^{\bN} \circ \Hess_{ |\bH|} +  \Hess_{ |\bH|}  \right| &\leq C \,   |\nabla |\bH|| \, |\nabla \tau| \, (1+ |\tau|) + C \, |\bH| \, ( |\nabla \tau| + |\Hess_{\tau}| ) \, .
\end{align}
\end{Lem}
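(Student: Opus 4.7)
The plan is to derive both estimates from a single traced Codazzi identity, taking an inner product with $\bN$, and then differentiating once more for the Hessian bound. No classification or variational input is needed; the whole argument is tensor calculus using that $\mathrm{tr}\,\tau=-\bN$.

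For the first inequality, I would write $A=|\bH|\,\tau$ and trace the Codazzi equation $\nabla^\perp_{E_k} A_{ij} = \nabla^\perp_{E_j} A_{ik}$ over $k=j$. The right-hand side becomes $\nabla^\perp_{E_i}\sum_j A_{jj} = -\nabla^\perp_{E_i}\bH$, while the left-hand side expands as $\tau(\nabla|\bH|,E_i) + |\bH|\,(\mathrm{div}_\perp\tau)_i$, where $(\mathrm{div}_\perp\tau)_i := \sum_j \nabla^\perp_{E_j}\tau_{ij}$. Taking the inner product with $\bN$ and using $\langle \nabla^\perp_{E_i}\bH,\bN\rangle = \nabla_{E_i}|\bH|$ (since $|\bN|=1$), I obtain the pointwise identity
\[
\tau^{\bN}(\nabla|\bH|,E_i) + \nabla_{E_i}|\bH| \;=\; -|\bH|\,\langle (\mathrm{div}_\perp\tau)_i,\bN\rangle.
\]
Since $|\langle(\mathrm{div}_\perp\tau)_i,\bN\rangle| \le \sqrt n\,|\nabla\tau|$, squaring and summing over $i$ yields the first inequality with constant $n$.

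For the second, I would differentiate the displayed identity in a tangent direction $E_k$ (working in normal coordinates at the point). The left-hand side produces exactly $[\tau^{\bN}\circ\Hess_{|\bH|}+\Hess_{|\bH|}]_{k,i}$ plus a correction $(\nabla_{E_k}\tau^{\bN})(\nabla|\bH|,E_i)$. The crucial higher-codimension ingredient is the identity $\mathrm{tr}\,\tau = -\bN$, obtained by tracing $A=|\bH|\,\tau$, which yields the cheap but essential bound
\[
|\nabla^\perp \bN| \;=\; |\mathrm{tr}(\nabla^\perp\tau)| \;\le\; \sqrt n\,|\nabla\tau|.
\]
Inserting this into the expansion $\nabla\tau^{\bN} = \langle\nabla^\perp\tau,\bN\rangle + \langle\tau,\nabla^\perp\bN\rangle$ bounds the correction by $C|\nabla|\bH||\,|\nabla\tau|(1+|\tau|)$. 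Differentiating the right-hand side of the identity contributes $|\nabla|\bH||\,|\nabla\tau|$ together with $|\bH|\,|\Hess_\tau|$ and a remaining $|\bH|\,|\nabla^\perp\bN|\,|\mathrm{div}_\perp\tau|$ term (controlled by $|\bH|\,|\nabla\tau|$ in the regime of interest), all of which collapse into the claimed bound.

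The only subtlety is that $\nabla^\perp\bN$, which vanishes identically in codimension one, produces extra terms in higher codimension; the trace identity $\bN = -\mathrm{tr}(\tau)$ is precisely what converts these into quantities already controlled by $|\nabla\tau|$, so the proof stays on the level of one application of Codazzi plus one differentiation.
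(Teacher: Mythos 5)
Your proposal is correct and is essentially the paper's own proof: trace the Codazzi symmetry of $\nabla A$ using $A = |\bH|\,\tau$ to obtain $|\bH|_i\,\tau_{ik}^{\bN} + |\bH|_k = -|\bH|\,\langle(\dv\,\tau)(E_k),\bN\rangle$, then differentiate this identity once more in a normal frame, using $\bN = -\Tr\,\tau$ to convert $\nabla^{\perp}\bN$ terms into $|\nabla\tau|$ quantities. (One cosmetic note: "trace $\nabla^{\perp}_{E_k}A_{ij}=\nabla^{\perp}_{E_j}A_{ik}$ over $k=j$" is a tautology as written; what you actually use, and what the paper writes, is the full symmetry $\nabla^{\perp}_{E_j}A_{ij}=\nabla^{\perp}_{E_i}A_{jj}$ summed over $j$.)
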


\begin{proof}
Let $E_i$ be an orthonormal frame for $\Gamma$.
Since $A= |\bH| \, \tau$, we have 
\begin{align}
\nabla_{E_i} A_{jk}  = \left( \nabla_{E_i}   |\bH| \right) \, \tau_{jk} + |\bH| \, \nabla_{E_i} \tau_{jk} \, .
\end{align}
By Codazzi, $\nabla A$ is fully symmetric in $i, j , k$.  Taking the trace over $i=j$, we see that
\begin{align}
-\nabla_{E_k} \bH = \nabla_{E_k} A_{ii} = \nabla_{E_i} A_{ik} = \left( \nabla_{E_i}   |\bH| \right) \, \tau_{ik}  + |\bH| \, \nabla_{E_i} \tau_{ik} 
	= \left( \nabla_{E_i}   |\bH| \right) \, \tau_{ik}  + |\bH| \, ( \dv \, \tau ) (E_k) \, . \notag
\end{align}
Taking the inner product with $\bN$ and using that $\langle \nabla \bN , \bH \rangle = 0$ gives 
\begin{align}	\label{e:diffthis}
	  |\bH|_i \, \tau_{ik}^{\bN} &= - \langle \nabla_{E_k} \bH , \bN \rangle - |\bH| \, \langle  \dv ( \tau ) (E_k)  , \bN \rangle = - |\bH|_k   - |\bH| \, \langle  \dv ( \tau ) (E_k)  , \bN \rangle
	 \, .
\end{align}
Since $|\dv \, \tau| \leq n \, |\nabla \tau|$, this gives the first claim.
   For the second claim, we choose the frame so that $\nabla_{E_i}^T E_j = 0$ at the point and we differentiate \eqr{e:diffthis} to get
\begin{align}	\label{e:diffthis2}
	  |\bH|_{ij} \, \tau_{ik}^{\bN}  & +  |\bH|_i \, \langle  \nabla_{E_j} \tau_{ik}  , \bN \rangle +  |\bH|_{i} \, \tau_{ik}^{\bN_j^{\perp}}= - |\bH|_{kj}  - |\bH|_j \, \langle  \dv ( \tau ) (E_k)  , \bN \rangle   
	   \notag \\
	 & - |\bH| \, \langle  \nabla_{E_j} \dv ( \tau ) (E_k)  , \bN \rangle    - |\bH| \, \langle  \dv ( \tau ) (E_k)  , \bN_j \rangle
	 \, .
\end{align}
In particular, using also that $\bN = - \Tr \, \tau$ to bound derivative of $\bN$ by derivatives of $\tau$,  there is a constant $C$ depending on $n$ so that
\begin{align}
		\left| |\bH|_{ij} \, \tau_{ik}^{\bN}  +  |\bH|_{kj} \right| \leq C \,   |\nabla |\bH|| \, |\nabla \tau| \, (1+ |\tau|) + C \, |\bH| \, ( |\nabla \tau| + |\Hess_{\tau}| ) \, .
\end{align}
\end{proof}

 The next lemma shows that if $\nabla |\bH|$,  $\nabla \tau$  and $\phi$ are small, then $A_{ij} \approx - 2  \, |\bH| \, A_{ik}   A^{\bN}_{kj}$.  In particular, $A$ is almost an eigenvector of $A^{\bN}$ with eigenvalue $2 |\bH|$.

\begin{Lem}	\label{l:A1a}
There exists $C$ depending on $n$ so that if   $\bH \ne 0$,     then 
 \begin{align}	 	\label{e:myA1a}
	    \left| A_{ij}     
	+2\,   |\bH| \, A_{ik} \,  A^{\bN}_{jk} -  2\frac{|\bH|_i \,|\bH|_j}{|\bH|} \,  \bN
		+2\,  \bN \,  \Hess_{|\bH|} (E_i , E_j)  \right|   \leq C\, |\bH| \left( |\nabla \tau | \, |x^T| + |\nabla^2 \tau| \right) \notag \\
		+ 2\, \frac{|\nabla \phi| \, |\nabla |\bH||}{|\bH|}+ 2 \, |\Hess_{\phi}| + C \,
		|\nabla |\bH|| \, |\nabla \tau| + C \, |A| \, |A^{\phi}|
		\, .
\end{align}
\end{Lem}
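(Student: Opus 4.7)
The plan is to start from identity \eqref{e:231} of Lemma \ref{l:gradbH}, namely
\[
-\Hess_\bH(E_i,E_j) = \Hess_\phi(E_i,E_j) + \tfrac{1}{2}(\nabla_{x^T}A)_{ij} + \tfrac{1}{2}A_{ij} + \tfrac{1}{2}A_{ik}A^{x^\perp}_{jk},
\]
and rearrange so as to isolate $A_{ij}+A_{ik}A^{x^\perp}_{jk}$. Since $x^\perp = 2\bH + 2\phi$, I split $A^{x^\perp}_{jk} = 2|\bH|A^\bN_{jk} + 2A^\phi_{jk}$, which produces
\[
A_{ij} + 2|\bH|\,A_{ik}A^\bN_{jk} = -2\Hess_\bH(E_i,E_j) - 2\Hess_\phi(E_i,E_j) - (\nabla_{x^T}A)_{ij} - 2A_{ik}A^\phi_{jk}.
\]
The $-2A_{ik}A^\phi_{jk}$ and $-2\Hess_\phi$ pieces already sit inside the $|A||A^\phi|$ and $|\Hess_\phi|$ buckets on the right-hand side of \eqref{e:myA1a}.

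Next, to extract the missing $-2\frac{|\bH|_i|\bH|_j}{|\bH|}\bN + 2\bN\Hess_{|\bH|}(E_i,E_j)$ from $-2\Hess_\bH$, I write $\bH = |\bH|\bN$ and differentiate twice in normal coordinates to get
\[
\Hess_\bH(E_i,E_j) = |\bH|_{ij}\bN + |\bH|_i\nabla^\perp_j\bN + |\bH|_j\nabla^\perp_i\bN + |\bH|\,\Hess_\bN(E_i,E_j).
\]
Substituting $\nabla^\perp_j\bN = (\nabla^\perp_j\bH - |\bH|_j\bN)/|\bH|$ in the two middle terms and then using Lemma \ref{l:gradbH}'s formula $\nabla^\perp_j\bH = -\tfrac{1}{2}A(x^T,E_j) - \nabla^\perp_j\phi$, the $\bN$-pieces of $|\bH|_i\nabla^\perp_j\bN$ produce precisely $-\frac{|\bH|_i|\bH|_j}{|\bH|}\bN$, while the tangential residues take the form $\frac{|\bH|_i A(x^T,E_j)}{|\bH|}$ and $\frac{|\bH|_i\nabla^\perp_j\phi}{|\bH|}$. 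The $\phi$-residue feeds directly into the admissible $\frac{|\nabla|\bH|||\nabla\phi|}{|\bH|}$ bucket; the $A(x^T,\cdot)$-residue I collapse by means of the scalar identity $|\bH|_j = -\tfrac{1}{2}A^\bN(x^T,E_j) - \langle\nabla^\perp_j\phi,\bN\rangle$ from Lemma \ref{l:gradbH} (this identity is also the starting point of Lemma \ref{l:A1}). The remaining $|\bH|\Hess_\bN$ piece I control by using $|\bH|^2|\nabla^\perp\bN|^2 = |\nabla^\perp\bH|^2 - |\bH|_j^2$ together with the bound $|\nabla^\perp\bH| \leq \tfrac{1}{2}|A(x^T,\cdot)| + |\nabla^\perp\phi|$, and for its second derivative I differentiate the quotient $\bN = \bH/|\bH|$ once more.

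Finally, for the drift term I expand via $A = |\bH|\tau$:
\[
(\nabla_{x^T}A)_{ij} = (\nabla_{x^T}|\bH|)\,\tau_{ij} + |\bH|\,(\nabla_{x^T}\tau)_{ij},
\]
the second piece being exactly the advertised $|\bH||\nabla\tau||x^T|$ error. For the first, I apply the Codazzi symmetry $|\bH|_a\tau_{bc}-|\bH|_b\tau_{ac} = O(|\bH||\nabla\tau|)$ to rearrange indices, then again trade $\nabla|\bH|$ for $A^\bN(x^T,\cdot)$ and $\langle\nabla^\perp\phi,\bN\rangle$ via Lemma \ref{l:gradbH}; the resulting contributions all fall into the $|\nabla|\bH|||\nabla\tau|$, $|\bH||\nabla^2\tau|$ and $|A||A^\phi|$ buckets. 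The hardest step will be Step 2's bookkeeping around $|\bH|\Hess_\bN$ and the $|\bH|\langle\tau(x^T,E_j),\nabla^\perp_i\bN\rangle$-type residues it generates: these pieces have no direct cylinder analog (since $\nabla^\perp\bN \neq 0$ on a cylinder even though the target identity holds there) and require one to split $\tau(x^T,E_i)$ into its $\bN$ and $\bN^\perp$ parts, estimate the $\bN^\perp$-part by $|\bH|\nabla^\perp\bN + (\nabla^\perp\phi)^{\perp\bN}$ using the $\bN^\perp$-component of Lemma \ref{l:gradbH}'s $\nabla^\perp_i\bH$ formula, and absorb everything back into the six error buckets by repeated use of the absorbing inequality.
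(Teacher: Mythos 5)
Your setup matches the paper's: you start from \eqref{e:231}, split $A^{x^\perp}_{jk}=2|\bH|A^\bN_{jk}+2A^\phi_{jk}$, expand $\Hess_\bH = \bN\,\Hess_{|\bH|}+|\bH|_i\nabla^\perp_j\bN+|\bH|_j\nabla^\perp_i\bN+|\bH|\Hess_\bN$, and expand $\nabla_{x^T}A$ via $A=|\bH|\tau$ and Codazzi. That is all the same as the paper.

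The gap is in how you try to dispose of $|\bH|_i\nabla^\perp_j\bN$, $|\bH|_j\nabla^\perp_i\bN$ and $|\bH|\Hess_\bN$. The paper's entire point here is a one-line observation that you miss: since $\bN=-\Tr\,\tau$, one has pointwise $|\nabla^\perp\bN|\leq C|\nabla\tau|$ and $|\Hess_\bN|\leq C|\nabla^2\tau|$, so those three terms land directly in the $|\nabla|\bH||\,|\nabla\tau|$ and $|\bH|\,|\nabla^2\tau|$ buckets and the proof is over. Your alternative --- bounding $\nabla^\perp\bN$ through $|\bH|^2|\nabla^\perp\bN|^2=|\nabla^\perp\bH|^2-|\nabla|\bH||^2$ and then $|\nabla^\perp\bH|\leq\tfrac12|A(x^T,\cdot)|+|\nabla^\perp\phi|$ --- does not close. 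It produces terms of the form $\tfrac{|\nabla|\bH||}{|\bH|}|A(x^T,\cdot)|$, and $|A(x^T,\cdot)|$ is not one of the six allowed error quantities and cannot be traded for them without exactly the $\bN=-\Tr\tau$ identity you skipped: the $\bN$-component of $A(x^T,\cdot)$ is $\approx -2\nabla|\bH|$ (fine), but the $\bN^\perp$-component is $\approx -2|\bH|\nabla^\perp\bN$, so you are back to bounding $\nabla^\perp\bN$, which is circular. Absorbing inequalities cannot break the circle because they would leave you with $|\bH|^2|\nabla^\perp\bN|^2$ and $\tfrac{|\nabla|\bH||^2}{|\bH|}$ in general directions, neither of which appears on the right side of \eqref{e:myA1a} (the $\tfrac{|\bH|_i|\bH|_j}{|\bH|}$ contribution that is allowed sits on the left and only in the $\bN$ direction). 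Also, your parenthetical that $\nabla^\perp\bN\neq 0$ on a cylinder is false: on $\SS^k_{\sqrt{2k}}\times\RR^{n-k}$ one has $\nabla^\perp\bN\equiv 0$, consistent with $\nabla\tau\equiv 0$ there and with the identity $\bN=-\Tr\,\tau$.
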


\begin{proof}
Using \eqr{e:231}, the Codazzi equation $ \left( \nabla_{x^T} A\right)_{ij} =  \left( \nabla_{E_i} A\right) (x^T,E_j)$ and   $A = |\bH| \, \tau$ gives
\begin{align}	 \label{e:411e}
	  - &\Hess_{\bf{H}}   (E_i, E_j)  =    \Hess_{\phi} (E_i, E_j) +  \frac{1}{2} \,  \left( \nabla_{x^T} A\right)_{ij}
	+ \frac{1}{2}  A_{ij} +  \frac{1}{2} \, A_{ik} \,  A^{x^{\perp}}_{jk}\notag  \\
		&=   \Hess_{\phi}  (E_i, E_j) + \frac{ |\bH|_i}{2} \,  \tau (x^T , E_j) + \frac{ |\bH| }{2}\, \nabla_{E_i} \tau (E_j , x^T)
	+ \frac{1}{2}  A_{ij} +  A_{ik} \,  A^{\bH}_{jk}+ A_{ik} \,  A^{\phi}_{jk}
 \, .  
\end{align}
Lemma \ref{l:gradbH}
gives that  $\nabla_{E_j}^{\perp} \bH = 
	- \frac{1}{2} A(x^T , E_j ) - \nabla_{E_j}^{\perp} \phi $ and, thus, 
 \begin{align} 	\label{e:e411}
	 \frac{1}{2} \,  |\bH|_i \,  \tau (x^T , E_j) =  - \frac{ |\bH|_i}{|\bH|} \, \left(  \bH_j^{\perp}  +  \phi_j^{\perp} \right) =  - \frac{ |\bH|_i}{|\bH|} \, \left( |\bH|_j \, \bN + |\bH| \, \bN_j^{\perp}  +  \phi_j^{\perp} \right) \, .
\end{align}
On the other hand, differentiating $\bH = |\bH| \, \bN$ twice gives that
\begin{align}	\label{e:e412}
	\Hess_{\bf{H}}  (E_i, E_j) =\bN \,  \Hess_{|\bH|} (E_i , E_j) + |\bH|_i \nabla_{E_j}^{\perp} \bN +  |\bH|_j \nabla_{E_i}^{\perp} \bN  + |\bH| 
	\, \Hess_{\bN} (E_i , E_j) \, .
\end{align}
Using \eqr{e:e411} and the formula \eqr{e:e412} for $\Hess_{\bH}$ in \eqr{e:411e} gives
 \begin{align}	 	\label{e:e413}
	   A_{ij}  &=   -    |\bH| \,   \nabla_{E_i}  \tau (x^T , E_j)
	-2\,   |\bH| \, A_{ik} \,  A^{\bN}_{jk}  + 2  \frac{ |\bH|_i}{|\bH|} \, \left( |\bH|_j \, \bN + |\bH| \, \bN_j^{\perp}  +  \phi_j^{\perp} \right)  \notag \\
	&- 2\, A_{ik} A^{\phi}_{jk} - 2 \, \Hess_{\phi} (E_i , E_j)  -2\,  \bN \,  \Hess_{|\bH|} (E_i , E_j) - 2\, |\bH|_i \nabla_{E_j}^{\perp} \bN   \\
	&-2\,  |\bH|_j \nabla_{E_i}^{\perp} \bN  -2\, |\bH| 
	\, \Hess_{\bN} (E_i , E_j)
 \, . \notag 
\end{align}
The inequality \eqr{e:myA1a}  follows from \eqr{e:e413} and the observation that
 $\bN =-\Tr \, \tau$  so $|\nabla \bN|$ and $|\Hess_{ \bN}|$ are bounded by $|\nabla \tau|$ and $|\nabla^2 \tau|$.
\end{proof}

\subsection{The approximate kernel of $\tau^{\bN}$}

The next lemma estimates the eigenvalues of the  two-tensor $\tau^{\bN}$.  The ``error terms'' on the right vanish if $\Gamma$ is a shrinker and $\tau$ is parallel.

\begin{Lem}	\label{l:appkern}
There exists $C$ depending on $n$ so that if $\bH (p) \ne 0$ and   $\tau^{\bN}(E_1 , \cdot) = \lambda \, E_1$ at $p \in \Gamma$ with $|\lambda| \leq \frac{3}{4}$, then
 \begin{align}	 \label{e:appker1}
	  |\lambda| \,   \left| 1
	+2\,   |\bH|^2 \,  \lambda  \right|  &\leq   C\,   |\nabla \tau | \, (1+|x^T|) + C \, |\nabla \tau |^2+ C\,  |\nabla^2 \tau|   + C\, |\tau| \, |A^{\phi}| \notag \\
		&+ 2\, \frac{|\nabla \phi| \, |\nabla \log |\bH|| +   |\Hess_{\phi}|}{|\bH|} + C \,
		\frac{|\nabla |\bH||}{|\bH|} \, |\nabla \tau|  (1+ |\tau|)
		\, .
\end{align}
\end{Lem}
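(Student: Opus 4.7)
\textbf{Proof plan for Lemma \ref{l:appkern}.}

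The plan is to apply Lemma \ref{l:A1a} at $p$ with $i=j=1$, take the inner product with $\bN$, and identify the resulting scalar identity as an expression for $|\bH|\,\lambda(1+2|\bH|^2\lambda)$ up to error terms controlled by derivatives of $|\bH|$, which in turn are controlled by Lemma \ref{l:A1} using the hypothesis $|\lambda|\le 3/4$.

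First, since $\tau^{\bN}(E_1,\cdot)=\lambda E_1$, we have $A^{\bN}_{1k}=|\bH|\,\tau^{\bN}_{1k}=|\bH|\,\lambda\,\delta_{1k}$. Therefore
\begin{align*}
\langle A_{11},\bN\rangle &= A^{\bN}_{11}=|\bH|\,\lambda,\\
\Big\langle\sum_k A_{1k}A^{\bN}_{1k},\,\bN\Big\rangle &= \sum_k (A^{\bN}_{1k})^2 = |\bH|^2\lambda^2.
\end{align*}
Taking the $\bN$-component of the vector inequality \eqref{e:myA1a} at $(E_1,E_1)$ then gives
\begin{align*}
\Big|\,|\bH|\,\lambda\bigl(1+2|\bH|^2\lambda\bigr) - 2\tfrac{(|\bH|_1)^2}{|\bH|} + 2\,\Hess_{|\bH|}(E_1,E_1)\,\Big| \leq \text{RHS of } \eqref{e:myA1a}.
\end{align*}

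The next step is to absorb the remaining $|\bH|$-derivative terms. By Lemma \ref{l:A1}, the $E_1$-component of $\tau^{\bN}(\nabla|\bH|)+\nabla|\bH|$ equals $(1+\lambda)|\bH|_1$, so the hypothesis $|\lambda|\le 3/4$ (hence $1+\lambda\ge 1/4$) yields
\begin{align*}
||\bH|_1|\le 4n\,|\bH|\,|\nabla\tau|,\qquad \text{so}\qquad \frac{(|\bH|_1)^2}{|\bH|}\le C\,|\bH|\,|\nabla\tau|^2.
\end{align*}
Similarly, the $(1,1)$-entry of $\tau^{\bN}\circ\Hess_{|\bH|}+\Hess_{|\bH|}$ is $(1+\lambda)(\Hess_{|\bH|})_{11}$, and the second estimate in Lemma \ref{l:A1} together with $1+\lambda\ge 1/4$ gives
\begin{align*}
|\Hess_{|\bH|}(E_1,E_1)|\le C\,|\nabla|\bH||\,|\nabla\tau|(1+|\tau|)+C\,|\bH|\,(|\nabla\tau|+|\Hess_\tau|).
\end{align*}

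Combining these two estimates with the inequality above, and then dividing through by $|\bH|$, yields $|\lambda|\,|1+2|\bH|^2\lambda|$ bounded by exactly the collection of error terms on the right of \eqref{e:appker1}; one uses $|A|=|\bH||\tau|$ to rewrite the $C|A||A^\phi|/|\bH|$ contribution from \eqref{e:myA1a} as $C|\tau||A^\phi|$, and $|\nabla|\bH||/|\bH|^2 =|\nabla\log|\bH||/|\bH|$ for the $\phi$-gradient term. The step I expect to be slightly delicate—though not hard—is keeping track of the dependence on $1+\lambda$ throughout, which is why the hypothesis $|\lambda|\le 3/4$ is needed; without that lower bound on $1+\lambda$ we could not solve the two identities of Lemma \ref{l:A1} for $|\bH|_1$ and $\Hess_{|\bH|}(E_1,E_1)$ in a controlled way.
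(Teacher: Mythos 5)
Your proposal is correct and follows essentially the same approach as the paper: take $i=j=1$ in Lemma~\ref{l:A1a}, project onto $\bN$, and control $|\bH|_1$ and $\Hess_{|\bH|}(E_1,E_1)$ via Lemma~\ref{l:A1} using $|1+\lambda|\geq 1/4$. The paper additionally chooses $E_2,\dots,E_n$ to diagonalize $\tau^{\bN}$, but as you implicitly recognize, this is unnecessary for the $(1,1)$-entry argument since $\tau^{\bN}(E_1,\cdot)=\lambda E_1$ already forces $A^{\bN}_{1k}=|\bH|\lambda\delta_{1k}$.
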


\begin{proof}
Choose   $E_i$ for $i \geq 2$ to diagonalize $\tau^{\bN}$ at $p$, so that 
\begin{align}
	A^{\bN}_{1j} = |\bH| \, \tau^{\bN}_{1j} = |\bH| \, \lambda \, \delta_{1j} \, .
\end{align}
Taking $i=j=1$ in Lemma \ref{l:A1a} and then taking the inner product with $\bN$
gives  
 \begin{align}	 \label{e:my418}
	    \left| |\bH| \, \lambda  
	+2\,   |\bH|^3 \,  \lambda^2 -  2\frac{|\bH|_1^2}{|\bH|}  
		+2\,    \Hess_{|\bH|} (E_1 , E_1)  \right|   \leq C\, |\bH| \left( |\nabla \tau | \, |x^T| + |\nabla^2 \tau| \right) \notag \\
		+ 2\, \frac{|\nabla \phi| \, |\nabla |\bH||}{|\bH|}+ 2 \, |\Hess_{\phi}| + C \,
		|\nabla |\bH|| \, |\nabla \tau| + C \, |A| \, |A^{\phi}|
		\, .
\end{align}
Lemma \ref{l:A1}
gives that
\begin{align}
	\left| (\lambda +1)\,  |\bH|_1  \right| &\leq 2n \, |\bH| \, |\nabla \tau |  \, , \\
	\left| (\lambda + 1) \Hess_{ |\bH|}(E_1 , E_1)   \right| &\leq C \,   |\nabla |\bH|| \, |\nabla \tau| \, (1+ |\tau|) + C \, |\bH| \, ( |\nabla \tau| + |\Hess_{\tau}| ) \, .
\end{align}
Since  $|\lambda| < \frac{3}{4}$, we conclude that
\begin{align}
	\left|   |\bH|_1  \right| &\leq 8n \, |\bH| \, |\nabla \tau |  \, , \\
	\left|  \Hess_{ |\bH|}(E_1 , E_1)   \right| &\leq C \,   |\nabla |\bH|| \, |\nabla \tau| \, (1+ |\tau|) + C \, |\bH| \, ( |\nabla \tau| + |\Hess_{\tau}| ) \, .
\end{align}
Using these bounds in \eqr{e:my418} and dividing by $|\bH|$ gives \eqr{e:appker1}.
\end{proof}

In applications,  $\Gamma$ will be  close to a cylinder and   $\nabla \tau$ and $\phi$  very small.  We will   show that $\Gamma$ is then very close to a cylinder.  The eigenvalues of $\tau^{\bN}$ on the cylinder are $0$ and $\frac{1}{k}$.  The next corollary shows that if $\Gamma$ has an eigenvalue below $\frac{1}{k}$, then it must be very close to zero.

\begin{Cor}	\label{c:appkern}
There exists $C$ depending on $n$ so that if $|\bH|^2 \geq \frac{1}{4} $ and $ 1 \geq |\nabla |\bH||$  at  $p \in \Gamma$ and $\lambda$ is an eigenvalue of $\tau^{\bN}$ at $p$ with  $|\lambda| \leq \min \{ \frac{3}{4} , \frac{1}{4|\bH|^2} \}$, then
 \begin{align}	 
	  |\lambda|   &\leq   C\,  \left( |\nabla \tau | \, \left[   |x^T| +       |\tau| + |\nabla \tau| \right] + |\nabla^2 \tau| + |A|^2 \, |\phi| +  |\nabla \phi|   +   |\Hess_{\phi}| \right)  \,
		\, .  
\end{align}
\end{Cor}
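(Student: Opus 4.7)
The plan is to extract Corollary \ref{c:appkern} directly from Lemma \ref{l:appkern} by (a) using $|\lambda|\leq \frac{1}{4|\bH|^2}$ to bound the LHS from below by $|\lambda|/2$, and (b) using $|\bH|^2\geq 1/4$ and $|\nabla|\bH||\leq 1$ to convert every weighted quantity on the RHS of Lemma \ref{l:appkern} into one of the unweighted quantities appearing in the conclusion. The role of the hypothesis $|\lambda|\leq 3/4$ is just to permit the application of Lemma \ref{l:appkern} in the first place.

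First I would handle the LHS. From $|\lambda| \leq \frac{1}{4|\bH|^2}$ one has $2|\bH|^2 |\lambda| \leq \tfrac{1}{2}$, so by the reverse triangle inequality $|1+2|\bH|^2\lambda| \geq \tfrac{1}{2}$, and therefore
\begin{align*}
|\lambda|\,\bigl|1+2|\bH|^2\lambda\bigr| \;\geq\; \tfrac{1}{2}\,|\lambda|.
\end{align*}
This is the step that uses the sharper of the two bounds on $|\lambda|$ — essentially, we must stay strictly away from the other root $\lambda=-1/(2|\bH|^2)$ of the quadratic factor, which is exactly what the assumption arranges.

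Next I would simplify the RHS of Lemma \ref{l:appkern}. The inequality $|\bH|^2\geq 1/4$ gives $|\bH|\geq 1/2$, hence $1/|\bH|\leq 2$. Combined with $|\nabla|\bH||\leq 1$ this yields $|\nabla\log|\bH||\leq 2$ and $|\nabla|\bH||/|\bH|\leq 2$, which convert
\begin{align*}
\frac{|\nabla\phi|\,|\nabla\log|\bH||}{|\bH|},\qquad \frac{|\Hess_\phi|}{|\bH|},\qquad \frac{|\nabla|\bH||}{|\bH|}\,|\nabla\tau|\,(1+|\tau|)
\end{align*}
into terms bounded by $C|\nabla\phi|$, $C|\Hess_\phi|$, and $C|\nabla\tau|(1+|\tau|)$, respectively. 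For the remaining coupling term $|\tau|\,|A^\phi|$ I would write $|\tau|=|A|/|\bH|$ and $|A^\phi|\leq |A|\,|\phi|$, so that $|\tau|\,|A^\phi|\leq |A|^2|\phi|/|\bH|\leq 2|A|^2|\phi|$.

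Inserting these bounds into Lemma \ref{l:appkern} and multiplying by $2$ yields the desired estimate once the various $|\nabla\tau|$ pieces from the factors $(1+|x^T|)$ and $(1+|\tau|)$ are grouped together with the $|\nabla\tau|^2$ term into the single block $|\nabla\tau|\,(|x^T|+|\tau|+|\nabla\tau|)$. The main obstacle here is purely bookkeeping: the entire content of the corollary is that under the stated pointwise hypotheses, the quadratic factor in $\lambda$ on the LHS of Lemma \ref{l:appkern} is bounded below by a constant and the $1/|\bH|$ weights on the RHS can be dropped, so no new analytical input is needed beyond Lemma \ref{l:appkern}.
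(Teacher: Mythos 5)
Your proposal is correct and follows essentially the same route as the paper: the paper also observes that $2|\bH|^2|\lambda|\le\tfrac12$ gives $\tfrac12\le |1\pm 2|\bH|^2\lambda|$, plugs Lemma~\ref{l:appkern} in, and then uses the hypotheses $|\bH|\ge\tfrac12$ and $|\nabla|\bH||\le 1$ to strip off the $1/|\bH|$ weights and to bound $|\tau|\,|A^{\phi}|$ by $2|A|^2|\phi|$. One small bookkeeping point, which is present in the paper's own statement as well: after substitution the intermediate bound contains a stand-alone $C|\nabla\tau|$ term coming from the ``$1+$'' in $(1+|x^T|)$ and $(1+|\tau|)$, and that term is not literally dominated by $|\nabla\tau|(|x^T|+|\tau|+|\nabla\tau|)$ without some lower bound on $|x^T|$, $|\tau|$ or $|\nabla\tau|$; in the applications of the corollary all of these quantities are in fact bounded above and $|\tau|$ is bounded below (near the cylinder $|\tau|\approx k^{-1/2}$), so the missing $1$ is harmless, but strictly speaking the corollary's right-hand side should carry a ``$1+$'' inside the bracket. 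This is an imprecision shared by the paper, not an error introduced by you.
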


\begin{proof}
Since $2|\bH|^2 \, |\lambda | \leq \frac{1}{2}$,   we  have that $\frac{1}{2} \leq \left| 1
	\pm 2\,   |\bH|^2 \,  \lambda  \right| $.  Thus, Lemma \ref{l:appkern} gives 
 \begin{align}	 
	  |\lambda|   &\leq   C\,  \left( |\nabla \tau |   \left[ 1+ |x^T| + \frac{|\nabla |\bH||}{|\bH|}     (1+ |\tau|) \right] + |\nabla \tau|^2 + |\nabla^2 \tau| \right) \notag \\
	  &+ 4\, \frac{|\nabla \phi| \, |\nabla \log |\bH||   +   |\Hess_{\phi}|}{|\bH|}  
	  + C \, |\tau | \, |A^{\phi}| \,
		\, .  
\end{align}
The corollary follows from this and the assumptions on $\bH$.
\end{proof}

The next lemma shows that if $\nabla \tau$ and $\phi$ are almost zero, then any vector almost in the kernel of $\tau^{\bN}$  is almost in the kernel of $A$.  This will use Lemma \ref{l:A1a}.

\begin{Lem}	\label{l:fullAsmall}
There exists $C$ depending on $n$ so that if $|\bH|^2 \geq \frac{1}{4} $,  $ 1 \geq |\nabla |\bH||$, and $4 \geq |A|^2$  at  $p \in \Gamma$, then
for any tangent vector $V$ at $p$
\begin{align}	 
	    \left| A (    V , \cdot ) \right| \leq 
	C\, \left| \tau^{\bN} (V) \right|  + C\,  \left( |\nabla \tau | \,(1+ |x^T|) + |\nabla^2 \tau|   + |\phi| +   |\nabla \phi|  +   |\Hess_{\phi}|   \right) \, |V|
		\, .
\end{align}
\end{Lem}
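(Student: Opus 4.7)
The plan is to apply Lemma \ref{l:A1a}, contracting its first index against an arbitrary tangent vector $W$. Using $A^{\bN} = |\bH|\,\tau^{\bN}$, this produces the identity
\begin{align*}
A(W, E_j) + 2|\bH|^2\, A\bigl(W, \tilde\tau^{\bN}(E_j)\bigr) = 2\tfrac{(W|\bH|)\,|\bH|_j}{|\bH|}\,\bN - 2\,\bN\,\Hess_{|\bH|}(W, E_j) + R_j(W),
\end{align*}
where $\tilde\tau^{\bN}$ is the self-adjoint endomorphism of $T_p\Gamma$ associated with the symmetric tensor $\tau^{\bN}$, and the normal-valued remainders satisfy $\sum_j |R_j(W)|^2 \leq C|W|^2\,\mathcal{E}^2$ with $\mathcal{E}$ the bracketed error on the right-hand side of the claim. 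The hypotheses $|\bH|^2 \geq 1/4$, $|A|\leq 2$, and $|\nabla|\bH||\leq 1$ absorb the various geometric prefactors ($|\bH|$, $|A|$, $|A^\phi|$, $|\nabla|\bH||$) appearing in Lemma \ref{l:A1a} into $C\mathcal{E}$.

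The key structural reduction is that the $|\bH|$-gradient and Hessian corrections above lie purely in the $\bN$-direction. Projecting onto normal directions $\nu \perp \bN$ eliminates them, and after diagonalizing $\tilde\tau^{\bN}$ by an orthonormal eigenbasis $\{v_k\}$ with eigenvalues $\{\lambda_k\}$, the identity collapses to the clean scalar relation
\begin{align*}
(1 + 2|\bH|^2 \lambda_k)\, A^{\nu}(W, v_k) = R^{\nu}_k(W) \qquad (\nu \perp \bN).
\end{align*}
Meanwhile the $\bN$-component is computed directly: $A^{\bN}(V, v_k) = |\bH|\,\lambda_k\,\langle V, v_k\rangle$, so $\sum_k A^{\bN}(V, v_k)^2 = |\bH|^2\,|\tau^{\bN}(V)|^2$ contributes exactly the $|\tau^{\bN}(V)|$ piece of the estimate to $|A(V,\cdot)|^2$. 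What remains is to bound $\sum_{k,\nu\perp\bN} A^{\nu}(V, v_k)^2$.

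The main obstacle is the possibility of \emph{bad} eigenvalues of $\tilde\tau^{\bN}$ near $-1/(2|\bH|^2)$, for which $I + 2|\bH|^2\tilde\tau^{\bN}$ is nearly singular and the scalar relation cannot be inverted. I split indices into \emph{good} $(|1+2|\bH|^2\lambda_k| \geq 1/2)$ and \emph{bad} $(<1/2)$; any bad $\lambda_k$ necessarily satisfies $|\lambda_k| \geq 1/(4|\bH|^2) \geq c(n) > 0$. Decompose $V = V_A + V_B$ into the orthogonal spans of good and bad eigenvectors. For good $k$ the relation applied with $W = V$ gives $|A^{\nu}(V, v_k)| \leq 2|R^{\nu}_k(V)|$ directly, contributing at most $C|V|^2\mathcal{E}^2$ after summing. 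For bad $k$, I write $A^{\nu}(V, v_k) = A^{\nu}(V_A, v_k) + A^{\nu}(V_B, v_k)$: the $V_B$ piece is controlled by $|A|\,|V_B| \leq C|\tau^{\bN}(V)|$ using $|V_B| \leq |\tau^{\bN}(V)|/c$ (which follows from $|\tau^{\bN}(V_B)| \leq |\tau^{\bN}(V)|$ by orthogonality of $\tau^{\bN}(V_A)$ and $\tau^{\bN}(V_B)$, together with $|\lambda_k|\geq c$ on bad eigenvectors), while the $V_A$ piece uses the symmetry $A^{\nu}(V_A, v_k) = A^{\nu}(v_k, V_A) = \sum_{m \in A}\langle V, v_m\rangle A^{\nu}(v_k, v_m)$ followed by reapplication of the scalar relation with $W = v_k$ in the second index $m$ (now good), to which Cauchy--Schwarz applies. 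Combining all contributions and taking square roots yields $|A(V,\cdot)| \leq C|\tau^{\bN}(V)| + C\mathcal{E}|V|$, completing the proof.
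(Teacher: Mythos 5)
Your proof is correct, but it is considerably more complicated than it needs to be, and the extra work stems entirely from which index of Lemma \ref{l:A1a} you contract against the tangent vector. By plugging $W$ into the first slot you leave the factor $I + 2|\bH|^2\,\tau^{\bN}$ (viewed as an endomorphism) acting on the free index, and you are then forced to invert it, whence the good/bad eigenvalue split, the orthogonal decomposition $V = V_A + V_B$, and the two-stage Cauchy--Schwarz argument. The paper's proof contracts $V$ into the other slot, the one that $\tau^{\bN}$ acts on: after projecting the identity of Lemma \ref{l:A1a} onto $\bN^{\perp}$ (which kills the $\nabla|\bH|$ and $\Hess_{|\bH|}$ corrections, exactly as you observe), feeding $V$ into the $j$-index of $A^{\bN^{\perp}}_{ij} + 2|\bH|^2 A^{\bN^{\perp}}_{ik}\tau^{\bN}_{jk}$ turns the second term into $2|\bH|^2 A^{\bN^{\perp}}\bigl(E_i,\, \tau^{\bN}(V)\bigr)$, which is at once bounded by $2|\bH|^2\,|A|\,|\tau^{\bN}(V)| \leq C\,|\tau^{\bN}(V)|$ under the stated hypotheses. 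Nothing needs to be inverted and the degenerate-eigenvalue case never arises; the $\bN$-component is simply $|\bH|\,\tau^{\bN}(V)$. Your bookkeeping does go through --- in particular the lower bound $|\lambda_k| \geq 1/(4|\bH|^2) \geq c(n)$ on bad eigenvalues (which requires $|\bH|^2 \leq n\,|A|^2 \leq 4n$, a point worth stating) and the orthogonality of $\tau^{\bN}(V_A)$ and $\tau^{\bN}(V_B)$ are both fine --- but the entire spectral apparatus is avoidable by the other contraction.
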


\begin{proof}
Given a normal vector $W$, let $W^{N^{\perp}}= W - \langle W , \bN \rangle \, \bN$ denote its projection orthogonal to $\bN$.
Let $E_i$ be an orthonormal frame.
Taking  the projection to $\bN^{\perp}$  in Lemma \ref{l:A1a} gives
 \begin{align}	 
	    \left| A_{ij}^{\bN^{\perp}}     
	+2\,   |\bH|^2 \, A_{ik}^{\bN^{\perp}}   \,  \tau^{\bN}_{jk}  
		   \right|   \leq C\,  \left( |\nabla \tau | \,(1+ |x^T|) + |\nabla^2 \tau|   + |\phi| +  |\nabla \phi|  +   |\Hess_{\phi}|   \right)
		\, .
\end{align}
Applying this to $V= V_i \, E_i$ gives
\begin{align}	 
	    \left| A_{ij}^{\bN^{\perp}}     V_i \right| \leq 
	C\, \left| \tau^{\bN} (V) \right|  + C\,  \left( |\nabla \tau | \,(1+ |x^T|) + |\nabla^2 \tau|   +|\phi|  +   |\nabla \phi|  +   |\Hess_{\phi}|   \right) \, |V|
		\, .
\end{align}
The remaining part of $A_{ij}      V_i$ is in the $\bN$ direction, which is controlled by $\tau^{\bN}(V)$.
\end{proof}

\subsection{A slicing lemma}

To understand the next lemma, it is useful to review some of the properties of a product shrinker $  \Sigma_0^k \times \RR^{n-k} \subset \RR^N$:
\begin{itemize}
\item   There is an $(n-k)$-dimensional Euclidean subspace $\cV$ of translation invariance; in particular, the normal projection of any $v \in \cV$ is zero.  
\item If we   intersect  $\Sigma$ with the (Euclidean) orthogonal complement of $\cV$, then we get the $k$-dimensional shrinker $\Sigma_0$ in $\RR^{N-(n-k)}$. 
\item 
The coordinate functions on $\Sigma_0$ are $\frac{1}{2}$-eigenfunctions of the new drift Laplacian $\cL_0$ and $|x|^2-2k$ is a $1$-eigenfunction.  
\end{itemize}
The next lemma shows that we almost get these properties  for a submanifold $\Gamma^n \subset \RR^N$  that is almost invariant under translation by a subspace.  In the lemma, $\Pi$ is orthogonal projection onto the normal space to $\Gamma$, and
$z_1, \dots , z_N$ is  an orthonormal set of coordinate functions on $\RR^N$.  Let $\cV$ be the span of $\partial_{k+2} , \dots , \partial_{n+1}$.

\begin{Lem}	\label{l:430}
If  $\delta > 0$ is small,  $\Gamma_0 = \Gamma \cap \{ z_{k+2} =  \dots = z_{n+1} = 0 \} $ is transverse so that 
	$ |\Pi (v)| \leq \delta \, |v| {\text{ on }} \Gamma_0$  for all $v\in \cV$ 
	 and $\cL_0$ is the drift Laplacian on $\Gamma_0$, then  
\begin{align}
	\left| (\cL_0 +1) \, (2k- |z|^2 ) \right| &\leq 2 \,   |z|  \, |\phi|   +4\, k \,\delta \,  |z| \, |A|   + 2(n-k) \, \left( \delta^2 \, |z|^2 +   |z| \, \sup \left| A |_{\Gamma_0^{\perp}} \right|  \right) \, ,  \label{e:wisz2}  \\
	\left| \left( \cL_0 + \frac{1}{2} \right) \, z_j \right| &\leq   |\phi| + \delta \, |z| + (n-k) \, \sup  \left| A |_{\Gamma_0^{\perp}} \right| + 2\, k \, \delta \, |A|
	\, .  \label{e:wisz2a}
\end{align}
\end{Lem}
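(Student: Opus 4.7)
The plan is to compute $(\cL_0 + \tfrac12) z_j$ and $(\cL_0 + 1)(2k - |z|^2)$ by first writing down universal formulas on an arbitrary submanifold, then identifying the drift Laplacian on $\Gamma_0$ against the one on $\Gamma$, and finally bounding the resulting geometric error terms using the transversality hypothesis.

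First I would establish the general identity that on any submanifold $\Sigma^m \subset \RR^N$ with drift Laplacian $\cL_\Sigma$ and shrinker quantity $\phi_\Sigma = \tfrac{1}{2} x^{\perp_\Sigma} - \bH_\Sigma$, the restrictions to $\Sigma$ of a Euclidean coordinate function $z_j$ and of $|x|^2$ satisfy
\begin{align*}
\bigl( \cL_\Sigma + \tfrac12 \bigr) z_j &= \langle \phi_\Sigma, \partial_j \rangle \, , \\
\bigl( \cL_\Sigma + 1 \bigr) (2m - |x|^2) &= -2 \langle \phi_\Sigma, x \rangle \, .
\end{align*}
These follow immediately from $\Delta_\Sigma z_j = - \langle \bH_\Sigma, \partial_j \rangle$, $\Delta_\Sigma |x|^2 = 2m - 2\langle \bH_\Sigma, x \rangle$, $\nabla_{x^T} z_j = \langle x^T, \partial_j \rangle$, and $\nabla_{x^T} |x|^2 = 2|x^T|^2$ (using that $2\langle \phi_\Sigma, x\rangle = |x^{\perp_\Sigma}|^2 - 2 \langle \bH_\Sigma, x\rangle$). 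Applied with $\Sigma = \Gamma_0$ and $m = k$, they reduce the lemma to estimating $\phi_0 := \phi_{\Gamma_0}$.

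Second I would decompose $\phi_0$ against $\phi$. Set $W := T\Gamma \cap T\Gamma_0^\perp$, an $(n-k)$-plane; then $T\Gamma_0^\perp = W \oplus T\Gamma^\perp$, so $x^{\perp_0} = x^W + x^{\perp_\Gamma}$. The orthogonal splitting of $T\Gamma_0^\perp$ also decomposes the second fundamental form of $\Gamma_0 \subset \RR^N$ as $A_{\Gamma_0}(X,Y) = B(X,Y) + A(X,Y)$ for $X,Y \in T\Gamma_0$, where $B(X,Y) = (\nabla_X^\Gamma Y)^W$ is the second fundamental form of $\Gamma_0$ inside $\Gamma$. Tracing over $T\Gamma_0$ and using $-\bH = \Tr_{T\Gamma_0} A + \Tr_W A$ gives $\bH_0 = \bH + \Tr_W A - \Tr_{T\Gamma_0} B$, hence
\[
\phi_0 \;=\; \phi + \tfrac{1}{2} x^W - \Tr_W A + \Tr_{T\Gamma_0} B \, .
\]

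Third I would bound the three extra terms, using $x \perp \cV$ on $\Gamma_0$ (since $\Gamma_0 \subset \cV^\perp$) and $|\Pi(v)| \leq \delta |v|$ for all $v \in \cV$. The term $\Tr_W A$ is bounded by $(n-k) \sup \bigl| A|_{\Gamma_0^\perp} \bigr|$ by dimension counting. For $x^W$, the key point is that $W$ is spanned by the tangential projections $v_b^T = v_b - v_b^\perp$ of the Euclidean basis $v_b = \partial_{k+1+b}$ of $\cV$ with $|v_b^\perp| \leq \delta$; a short computation then shows $\|\Pi_W - \Pi_\cV\|_{\mathrm{op}} \leq \sqrt{2}\,\delta$, so for $x \perp \cV$ one has $|x^W| = |(\Pi_W - \Pi_\cV)x| \leq \sqrt{2}\,\delta|z|$, and in the first inequality the identity $\langle x^W, x \rangle = |x^W|^2$ gives the $\delta^2 |z|^2$ term. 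For $\Tr_{T\Gamma_0} B$, the crucial computation is that because each $v_c$ is parallel in $\RR^N$, one has $\nabla_X^\Gamma v_c^T = -(\nabla_X v_c^\perp)^{T\Gamma}$, which is minus the shape operator of the normal component $v_c^\perp$; pairing with $Y \in T\Gamma_0$ (on which $\langle Y, v_c^T \rangle = 0$) yields the clean identity
\[
\langle B(X,Y), v_c^T \rangle \;=\; -\langle A(X,Y), v_c^\perp \rangle \, .
\]
Since $|v_c^\perp| \leq \delta$, this gives $|B| \leq C\delta |A|$ and hence $|\Tr_{T\Gamma_0} B| \leq 2k\delta |A|$. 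Substituting these estimates into the formulas from Step 1 and using $|x| = |z|$ on $\Gamma_0$ produces the claimed bounds.

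The main obstacle is the $\Tr_{T\Gamma_0} B$ estimate. Although $\Gamma_0$ is cut out of $\Gamma$ by linear equations in $\RR^N$, its extrinsic geometry inside the curved ambient $\Gamma$ is non-trivial and couples the Euclidean tilt $\delta$ of $\cV$ relative to $T\Gamma$ with the curvature $A$ of $\Gamma$ itself. The identity $\langle B(X,Y), v_c^T \rangle = -\langle A(X,Y), v_c^\perp \rangle$ is precisely what decouples these two effects and reduces the bound to the elementary estimate $|v_c^\perp| \leq \delta$.
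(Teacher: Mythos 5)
Your proposal is correct in strategy and takes a genuinely different route from the paper. The paper proves the two inequalities by writing down a universal comparison formula for the drift Laplacians (equation \eqr{e:combinez}: $\cL_0 w = \cL w + (\Delta_0 - \Delta)w + \tfrac12\langle(\Pi_0 - \Pi)(z),\nabla w\rangle$), specializing to $w = |z|^2$ and $w = z_j$, and then bounding the resulting error terms one by one via the projection operators $\Pi$ and $\Pi_0$. You instead first reduce both claims to the universal shrinker identities $(\cL_\Sigma + \tfrac12)z_j = \langle \phi_\Sigma,\partial_j\rangle$ and $(\cL_\Sigma + 1)(2m-|x|^2) = -2\langle\phi_\Sigma, x\rangle$ (both of which I have checked against the paper's \eqr{e:e444} and the computation preceding \eqr{e:summingup}, and they are correct), and then prove and estimate the geometric decomposition $\phi_0 = \phi + \tfrac12 x^W - \Tr_W A + \Tr_{T\Gamma_0}B$. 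This packages the paper's three error terms into a single object with clear geometric meaning --- the defect of the slice's shrinker quantity from the ambient one --- which is a cleaner conceptualization; the paper's version is more elementary and works for any test function $w$, not just those tied to the position vector.

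The identities do match up: your $x^W$ is the paper's $(\Pi_0 - \Pi)(z)$, your $\Tr_W A$ is the paper's $\sum_{i>k}\langle\Pi(z),\nabla_{E_i}E_i\rangle$, your $\Tr_{T\Gamma_0}B$ is the paper's $\sum_{i\le k}\langle\Pi_0(z^T),\nabla_{E_i}E_i\rangle$, and your key computation $\langle B(X,Y), v_c^T\rangle = -\langle A(X,Y), v_c^\perp\rangle$ is exactly the argument the paper runs in \eqr{e:e442}. Two small cautions. First, your dimension-free operator-norm claim $\|\Pi_W - \Pi_\cV\| \le \sqrt{2}\,\delta$ is plausible (the largest principal angle between $W$ and $\cV$ has sine at most $\delta$ under the hypothesis) but you have not justified it; the dimension-dependent bound $|x^W|^2 \le 2(n-k)\delta^2|z|^2$, obtained by simply testing $x^W$ against the spanning vectors $v_c^T$ as the paper does in \eqr{e:my3b2}, is easier to establish and already matches the $(n-k)$ factor in \eqr{e:wisz2}. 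Second, for \eqr{e:wisz2a} the naive bound $\tfrac12|\langle x^W,\partial_j\rangle| \le \tfrac12|x^W|$ combined with the $(n-k)$-dependent estimate on $|x^W|$ would overshoot the stated dimension-free $\delta|z|$; you need either your operator-norm claim to go through, or, as the paper does in \eqr{e:anotherterm1}, to exploit the extra fact that $\partial_j \perp \cV$ (valid since otherwise $z_j \equiv 0$ on $\Gamma_0$), so that $\langle x^W,\partial_j\rangle = \langle x, (\Pi_W-\Pi_\cV)\partial_j\rangle$ picks up a small factor from $\partial_j$ rather than from $x^W$.
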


\begin{proof}
Choose an orthonormal frame $E_1 , \dots , E_n$ for $\Gamma$ with $E_1 , \dots , E_k$ tangent to $\Gamma_0$.    Let   $\Pi_0$ be orthogonal projection onto the normal space to $\Gamma_0$.   
For each $p \in \Gamma_0$, let $W_p$ be the orthogonal complement of $T_p\Gamma_0$ inside of $T_p \Gamma$.  In particular, $W_p$ is spanned by
$E_{k+1} , \dots , E_n$ and $W_p$ has dimension $n-k$.  We will omit the $p$ and simply write $W$ when it is clear.

The map $v \in \cV$ goes to $v^T$ is injective  since $ |\Pi (v)| \leq \delta \, |v| {\text{ on }} \Gamma_0$.  Given $i \leq k$, we have
\begin{align}
	\langle E_i , v^T \rangle = \langle E_i , v \rangle - \langle E_i , \Pi (v) \rangle = 0 \, .
\end{align}
Thus, if $v \in \cV$, then $v^T \in W$.  Since the dimensions are the same, this map is bijective.

Given a function $w$, we have
\begin{align}	 \label{e:foranyw}
	\Delta_0 \, w &= \sum_{i=1}^k \langle \nabla_{E_i} \left( \nabla^T w - \Pi_0 (\nabla^T w) \right) , E_i \rangle =  \sum_{i=1}^k \langle \nabla_{E_i}   \nabla^T w   , E_i \rangle
	-  \sum_{i=1}^k \langle \nabla_{E_i}   \Pi_0 (\nabla^T w)   , E_i \rangle \notag \\
	&=  \Delta w - \sum_{i=k+1}^n \langle \nabla_{E_i}   \nabla  w   , E_i \rangle +  \sum_{i=k+1}^n \langle \nabla_{E_i}   \Pi (\nabla  w  ) , E_i \rangle
	+  \sum_{i=1}^k \langle   \Pi_0 (\nabla^T w)   , \nabla_{E_i}  E_i \rangle
	\, . 
\end{align}
Since $\cL_0 w = \Delta_0 w - \frac{1}{2} \langle z  - \Pi_0(z) , \nabla w \rangle$ and $\cL w = \Delta w - \frac{1}{2} \langle z  - \Pi (z) , \nabla w \rangle$, we have
\begin{align}	\label{e:combinez}
	\cL_0 \, w  =   \cL \, w +  \Delta_0 w - \Delta w + \frac{1}{2} \langle    \Pi_0(z) - \Pi (z)  , \nabla w \rangle \, .
\end{align}
Now set $w=|z|^2$.  Since the Euclidean Hessian of $|z|^2$ is twice the identity,   \eqr{e:foranyw} gives
\begin{align}	\label{e:delta0delta}
	\Delta_0 w &=  \Delta w - 2(n-k) -  \sum_{i=k+1}^n \langle   \Pi (\nabla  w  ) ,  \nabla_{E_i}  E_i \rangle
	+  \sum_{i=1}^k \langle   \Pi_0 (\nabla^T w)   , \nabla_{E_i}  E_i \rangle
	\, .  
\end{align}
Using that $\nabla \, |z|^2 = 2 \, z$ and
\begin{align}
	\cL \, |z|^2 &= 2\, \dv_{\Gamma} \, z^T - \langle z^T , z^T \rangle = 2n - 2\, \dv_{\Gamma} \, \Pi (z) -|z^T|^2 = 2n - 2 \langle \Pi (z) , \bH \rangle - w + |\Pi (z)|^2 \notag \\
	& = 2n - w + 2 \langle \Pi (z) , \phi \rangle \, ,
\end{align}
we combine \eqr{e:delta0delta} and \eqr{e:combinez} to get
\begin{align}	\label{e:summingup}
	\cL_0 \, |z|^2  &= 2k - |z|^2 + 2 \langle \Pi (z) , \phi \rangle +  \langle    (\Pi_0  - \Pi) (z)  , z  \rangle -  2\, \sum_{i=k+1}^n \langle   \Pi (z  ) ,  \nabla_{E_i}  E_i \rangle \notag \\
	&+  2\, \sum_{i=1}^k \langle   \Pi_0 (z^T )   , \nabla_{E_i}  E_i \rangle
\, .
\end{align}
We will  bound   the last four terms in \eqr{e:summingup} to prove \eqr{e:wisz2}.
First, we have
\begin{align}	\label{e:my3b1}
	\left|  \sum_{i=k+1}^n \langle   \Pi ( z  ) ,  \nabla_{E_i}  E_i \rangle  \right| \leq   (n-k) \, |z| \sup_{\eta \in W} \, \frac{|A(\eta,\eta)|}{|\eta |^2} \, .
\end{align}
Since $\langle    \Pi_0(z) - \Pi (z)  , z  \rangle = \sum_{i=k+1}^n \langle z , E_i \rangle^2$ and the map $v \to v^T$ from $\cV$ to $W$ is onto, we have
\begin{align}	\label{e:my3b2}
	\left| \langle    \Pi_0(z) - \Pi (z)  , z  \rangle \right| &\leq  (n-k) \,    \sup_{v\in \cV} \, \langle z , \frac{v^T}{|v^T|} \rangle^2 \leq 
	(n-k) \,  2 \, \sup_{v\in \cV} \, \langle z , \frac{v - \Pi (v)}{|v|} \rangle^2 \notag \\
	&= (n-k) \,  2 \, \sup_{v\in \cV} \, \langle z ,   \frac{\Pi (v)}{|v|} \rangle^2  \leq 2(n-k) \, \delta^2 \,  |z|^2 \, .
\end{align}
Here the equality used that $\langle v , z \rangle = 0$ for $v \in \cV$.

Observe next that $\Pi_0$ of any tangent vector must be tangent to $\Gamma$ but normal to $\Gamma_0$; in particular, $\Pi_0 (z^T  ) \in W$.  Thus, we can choose $v \in \cV$ so that $|v| \leq 2\, |z|$ and $v^T = \Pi_0 (z^T  )$.  Note that $\langle v^T , E_i \rangle = 0$ for $i \leq k$.  Thus, we get for each $i \leq k$ that
\begin{align}	\label{e:e442}
	\left| \langle   \Pi_0 (z^T  )   , \nabla_{E_i}  E_i \rangle \right| &= \left|  \langle   v^T   , \nabla_{E_i}  E_i \rangle \right|  
	 = \left|  \langle   \nabla_{E_i}   v^T   , E_i \rangle \right| \notag \\ &= \left|  \langle   \nabla_{E_i}   \Pi (v)   , E_i \rangle \right| 
	=  \left|  \langle    \Pi (v)   ,  \nabla_{E_i}  E_i \rangle \right| \leq 2\, |z| \, \delta \, |A| \, .
\end{align}
Using this, \eqr{e:my3b1} and \eqr{e:my3b2}
in \eqr{e:summingup} gives \eqr{e:wisz2}.

To get the second claim, first apply \eqr{e:foranyw} with $w=z_j$ to get
\begin{align}	 \label{e:e443}
	\Delta_0 \, z_j &=  \Delta \, z_j   -  \sum_{i=k+1}^n \langle   \Pi (\partial_{z_j}  ) , \nabla_{E_i}  E_i \rangle
	+  \sum_{i=1}^k \langle   \Pi_0 (\partial_{z_j}^T)   , \nabla_{E_i}  E_i \rangle
	\, .   
\end{align}
Applying the $\cL$ operator to $z_j$ on $\Gamma$ gives
\begin{align}	 \label{e:e444}
	\cL \, z_j = \Delta \, z_j - \frac{1}{2} \langle \partial_{z_j}^T , z  \rangle = - \langle \partial_{z_j}^{\perp} , \bH \rangle - \frac{1}{2} \langle \partial_{z_j}  , z  \rangle +   \frac{1}{2} \langle \partial_{z_j}^{\perp} , z  \rangle
	=  \langle \partial_{z_j} , \phi \rangle - \frac{1}{2} \, z_j \, .
\end{align}
Using \eqr{e:combinez} with $w= z_j$ and then   \eqr{e:e443} and \eqr{e:e444} gives
\begin{align}	
	\cL_0 \, z_j + \frac{1}{2} \, z_j &
	=   \langle \partial_{z_j} , \phi \rangle + \frac{1}{2} \langle    (\Pi_0 - \Pi) (z)   , \partial_{z_j} \rangle
	 -  \sum_{i=k+1}^n \langle   \Pi (\partial_{z_j}  ) , \nabla_{E_i}  E_i \rangle  
	+  \sum_{i=1}^k \langle   \Pi_0 (\partial_{z_j}^T)   , \nabla_{E_i}  E_i \rangle 
	 \, .  \notag
\end{align}
We will bound the terms on the right.
As in \eqr{e:e442},  $  \Pi_0 (\partial_{z_j}^T)  \in W$ and we can choose $v \in \cV$ so that $|v| \leq 2 $ and $v^T =   \Pi_0 (\partial_{z_j}^T) $.  We get for each $i \leq k$ that
\begin{align}	 
\left|  \langle   \Pi_0 (\partial_{z_j}^T)   , \nabla_{E_i}  E_i \rangle \right|
	&= \left|  \langle  v^T   , \nabla_{E_i}  E_i \rangle \right| 
	=  \left|  \langle    \Pi (v)   ,  \nabla_{E_i}  E_i \rangle \right|  \leq \delta \, |v| \, |A| \leq 2 \, \delta \, |A| \, .
\end{align}
Since the  projections $\Pi$ and $\Pi_0$ are symmetric and $\Pi - \Pi_0$ is projection to $W$,  we have
\begin{align}	\label{e:e2tolast}
	\left| \langle   ( \Pi_0  - \Pi )(z)  , \partial_{z_j} \rangle \right| &  \leq |z| \, \left|  (\Pi_0  - \Pi  )(\partial_{z_j}) \right| \leq |z| \, \sup_{\eta \in W} \frac{ \left| \langle \eta , \partial_{z_j} \rangle \right| }{|\eta|} = 
	 |z| \, \sup_{v\in \cV} \frac{ \left| \langle v^T , \partial_{z_j} \rangle \right| }{|v^T|}
	\, .
\end{align}
In proving \eqr{e:wisz2a}, 
we can assume that $\partial_{z_j}$ is not in $\cV$ since otherwise $z_j \equiv 0$ on $\Gamma_0$.  It follows that $\langle v , \partial_{z_j} \rangle = 0$ for any $v \in\cV$ and, thus,
\eqr{e:e2tolast} gives
\begin{align}	\label{e:anotherterm1}
	\left| \langle    (\Pi_0  - \Pi) (z)  , \partial_{z_j} \rangle \right| &  \leq  
	 |z| \, \sup_{v\in \cV} \frac{ \left|  \Pi (v )  \right| }{|v^T|} \leq \frac{\delta}{1-\delta}\, |z| \leq 2 \, \delta \, |z|  
	\, .
\end{align}
Combining these bounds gives \eqr{e:wisz2a}, completing the proof.
\end{proof}

\subsection{Distance to a cylinder}
 
 The proposition will   bound  the distance from $\Gamma$ to a cylinder, assuming that $\Gamma$ is close enough to a cylinder in the fixed ball $B_{2n}$.
   We will  assume that $B_R \cap \Gamma$ satisfies the following bounds:
 \begin{enumerate}
 \item[($A\star$)]  $|\bH|^2 \geq \frac{1}{4} $,  $ 1 \geq |\nabla |\bH||$,  $4\geq |A|^2$ and there exists $C_{\ell}$ for each $\ell$ so that $|\nabla^{\ell} A| \leq C_{\ell}$.
 \end{enumerate}
We will use the following condition:
 \begin{itemize}
\item[($\star_{\epsilon_1 , r}$)]  There is a compactly supported 
  $U_r$ on $\SS^k_{\sqrt{2k}} \times \RR^{n-k}$ so that after a rotation of $\RR^N$ $B_r \cap \Gamma$ is contained in the graph of $U$ with $\| U_r \|_{C^2}\leq \epsilon_1$ and $\| U_r \|^2_{L^2 } \leq \frac{1}{2} \, \e^{ - \frac{r^2}{4}}$.
\end{itemize}

   \begin{Def}	\label{d:epsilonclose}
$\Sigma$ and $\Gamma$ are $(\epsilon , R,   C^2)$-close  if $B_R \cap \Sigma$ is the normal graph of a vector field $U$ over (a subset of) $\Gamma$ and $\| U \|_{C^2} \leq \epsilon$.
\end{Def}

  \begin{Pro}	\label{p:Lojae1A}
  There exists $\epsilon_0 = \epsilon_0 (n)$ so that
 given  $\epsilon_1$ and  $\alpha > 0$, there exists  $\bar{R}$ so that if 
   $\Gamma$ is $(\epsilon_0 , 2n, C^2)$-close to $\SS^k_{\sqrt{2k}} \times \RR^{n-k}$,  ($A\star$) holds
 on $B_R \cap \Gamma$ for $R \geq \bar{R}$ and
 \begin{align}	\label{e:bdtauphi}
 	\left( |\phi|_2 + |\nabla \tau|_1^2 \right) \, \e^{ - \frac{|x|^2}{4} } \leq  \e^{ - \frac{R^2}{4} } {\text{ on }} B_R \cap \Gamma \, , 
 \end{align}
  then ($\star_{\epsilon_1 , (1-\alpha)R}$) holds.
\end{Pro}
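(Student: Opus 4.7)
The plan is to propagate the $C^2$-closeness from $B_{2n}$ out to $B_{(1-\alpha)R}$ by applying the pointwise classification results of this section and then gluing them via integration along $\Gamma$. Fix a rotation of $\RR^N$ given by the hypothesized $(\epsilon_0,2n,C^2)$-closeness so that the cylinder axis is $\cV=\text{span}(\partial_{k+2},\dots,\partial_{n+1})$ and the spherical factor lies in $\text{span}(\partial_1,\dots,\partial_{k+1})$. The weighted hypothesis \eqr{e:bdtauphi} gives the pointwise bound
\begin{align*}
|\phi|_2 + |\nabla\tau|_1^2 \leq \e^{(|x|^2-R^2)/4} \quad\text{on } B_R \cap \Gamma\,,
\end{align*}
whose crucial feature is that on $B_{(1-\alpha)R}$ it decays like $\e^{-\alpha(2-\alpha)R^2/4}$, beating any polynomial in $R$. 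I will write $o_R(1)$ for any quantity that is superpolynomially small in $R$.

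\textbf{Step 1 (eigenstructure of $\tau^{\bN}$).} At every $p\in B_{(1-\alpha)R}\cap\Gamma$, hypothesis $(A\star)$ makes Corollary \ref{c:appkern} applicable, bounding each eigenvalue of $\tau^{\bN}(p)$ of magnitude at most $3/4$ by an expression dominated by $(1+|x^T|)(|\phi|_2+|\nabla\tau|_1+|\nabla\tau|_1^2)$, hence by $o_R(1)$. The initial $C^2$-closeness on $B_{2n}$ fixes the eigenvalue pattern there ($n-k$ zero eigenvalues and $k$ eigenvalues at $1/k$), and continuity along $\Gamma\cap B_{(1-\alpha)R}$ (using smallness of $|\nabla\tau|$ to forbid eigenvalue crossings) preserves this partition throughout. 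Let $\cV_p \subset T_p\Gamma$ denote the span of the approximate zero-eigenvectors of $\tau^{\bN}$.

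\textbf{Step 2 (approximate translation invariance).} Lemma \ref{l:fullAsmall} gives $|A(v,\cdot)| \leq o_R(1)|v|$ for $v\in\cV_p$, so $\Gamma$ is approximately translation-invariant along $\cV_p$. Because $|\nabla\tau|$ is $o_R(1)$, $\cV_p$ varies slowly with $p$; integrating the infinitesimal rotation along any path of length $\leq R$ in $\Gamma$ starting in $B_{2n}$ (where $\cV_p$ matches $\cV$ to $O(\epsilon_0)$) shows $\cV_p$ stays within angular distance $o_R(1)$ of the fixed $\cV$. Here the exponential weight on $|\nabla\tau|$ is what dominates the polynomial path-length growth. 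Consequently every unit $v\in\cV$ has $|\Pi(v)|\leq\delta=o_R(1)$.

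\textbf{Step 3 (slice close to a sphere).} The slice $\Gamma_0 = \Gamma\cap\{z_{k+2}=\cdots=z_{n+1}=0\}$ is transverse by Step 2, and Lemma \ref{l:430} gives $L^\infty$ bounds of size $o_R(1)$ for $(\cL_0+\frac{1}{2})z_j$ ($j=1,\dots,k+1$) and for $(\cL_0+1)(2k-|z|^2)$ on $\Gamma_0$. The analog of Lemma \ref{l:eigenA} for $\Gamma_0$, obtained by graphing $\Gamma_0$ over $\SS^k_{\sqrt{2k}}$ (using the $C^2$-closeness near the base) and exploiting the spectral gap of $\cL_0$ on the round sphere separating the $\frac{1}{2}$-eigenspace from its neighbors, yields a Gaussian $W^{2,2}$ displacement of $\Gamma_0$ from $\SS^k_{\sqrt{2k}}$ controlled by the same $o_R(1)$. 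Elliptic bootstrap using the $C^\ell$ bounds from $(A\star)$ upgrades this to $C^2$-closeness with Gaussian decay inherited from the hypothesis.

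\textbf{Step 4 (assembly and $L^2$ estimate).} Combining Steps 2 and 3 presents $B_{(1-\alpha)R}\cap\Gamma$ as the normal graph of a vector field $U$ on $\SS^k_{\sqrt{2k}}\times\cV$ with $\|U\|_{C^2}\leq\epsilon_1$ for $R\geq\bar R$. Cut $U$ off outside $B_{(1-\alpha)R}$ so it is compactly supported. The pointwise size of $U$ inherits the Gaussian decay of the source terms, giving an estimate of the form $|U(x)|^2\lesssim(1+|x|)^{2N}\,\e^{(|x|^2-R^2)/4}$. Integrating against the Gaussian weight,
\begin{align*}
\|U\|_{L^2}^2 \lesssim \int_{B_{(1-\alpha)R}} (1+|x|)^{2N}\,\e^{(|x|^2-R^2)/4}\,\e^{-|x|^2/4} \lesssim R^{2N+n}\,\e^{-R^2/4} \leq \tfrac{1}{2}\,\e^{-(1-\alpha)^2R^2/4}
\end{align*}
for $R$ sufficiently large, since the factor $\e^{-\alpha(2-\alpha)R^2/4}$ defeats any polynomial.

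\textbf{Main obstacle.} The delicate step is Step 2: propagating the pointwise subspaces $\cV_p$ into a globally consistent direction aligned with the fixed base $\cV$, since only then is the slice $\Gamma_0$ transverse with the small $\delta$ demanded by Lemma \ref{l:430}. This propagation rests on integrating the pointwise smallness of $\nabla\tau$ along paths of length up to $R$, which is exactly where the exponential weight in \eqr{e:bdtauphi} is crucial. A secondary difficulty is obtaining pointwise (not just $W^{2,2}$) Gaussian decay in Step 3 strong enough to close the $L^2$ estimate in Step 4; this requires a bootstrap using the weighted drift-Laplacian estimates together with $(A\star)$.
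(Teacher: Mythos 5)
Your proposal follows essentially the same route as the paper's proof: fix a near-zero eigenspace at the base ball using Corollary \ref{c:appkern}, propagate it outward by integrating the differential inequality for $\tau^{\bN}$ (with the exponential weight in \eqr{e:bdtauphi} beating the polynomial path-length), apply the slicing lemma \ref{l:430} and the spectral gap of $\cL_0$ to control the slice $\Gamma_0$, and then conclude by extending the graph off $\Gamma_0$. The one place you compress is the extension of \eqr{e:jnplus2p}--\eqr{e:jnplus2q} from $\Gamma_0$ to the full ball (where the paper constructs the flows $V^i$ via the matrix $J^{ij}$) and the choice of the subspace $\cV_0$ orthogonal to the approximate $\frac{1}{2}$-eigenspace in $L^2(\Gamma_0)$; both steps are glossed over as "combining," but the key mechanism you identify is correct.
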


 \begin{proof} 
In the proof, $C$ will be a constant that depends only on $n$; it will be allowed to change from line to line.
 Fix an arbitrary point $p \in B_{2n} \cap \Gamma$ and let $E_i(p)$ be an orthonormal frame for $T_p\Gamma$ that diagonalizes $\tau^{\bN}$ at $p$, ordered so that 
 \begin{itemize}
 \item $\left|\lambda_i + \frac{1}{k} \right| \leq \bar \epsilon_0$ for $i= 1 , \dots , k$.
 \item $|\lambda_i| \leq \bar \epsilon_0$ for $i=k+1 , \dots , n$,
 \end{itemize}
 where $\bar \epsilon_0$ is a function of $\epsilon_0$ that vanishes at $0$.  For $\epsilon_0 > 0$ small enough, 
 Corollary \ref{c:appkern} gives  
 \begin{align}
 	|\lambda_i| \leq C\,   \left( |\nabla \tau|_1 + |\phi|_2 \right) \leq C \, \e^{ - \frac{R^2}{8} }  {\text{ for }} i= k+1 , \dots , n \, .
 \end{align}
 Let $\cV$ be the span of $E_i (p)$ for $i= k+1 , \dots , n$ and define
  tangential vector fields $v_i =[E_i (p)]^T$, so that $v_i (p) = E_i (p)$.  
 
Let $\gamma \subset B_{\sqrt{|q|^2+4n}} \cap \Gamma$ be a curve from $\gamma(0)=p$ to $\gamma(s)=q$ with $|\gamma'| =1$.
 We will show that if   $i=k+1 , \dots , n   $, then
 at $q$
 \begin{align}
 	\left| \Pi (E_i (p)) \right| & = \left| E_i (p) - v_i   \right|  \leq C \, \e^{ \frac{|q|^2-R^2}{8}}\,    (1 +s^2) \, , \label{e:vi1}  \\
 	\left| A ( v_i   , \cdot ) \right| &\leq C \, \e^{ \frac{|q|^2-R^2}{8}}\,    (1 +s^2) \label{e:vi2} \,  .
 \end{align}
Let $w$  be a parallel vector field on $\gamma$ with $w(0)= E_i (p)$.  
 By \eqr{e:bdtauphi} and ($A\star$),   
 $|\nabla_{\gamma'} \tau^{\bN} (w , \cdot)| \leq C \, \e^{ \frac{|q|^2-R^2}{8}}$ and, thus, 
$
 	  |\tau^{\bN} (w, \cdot)| \leq C \, \e^{ \frac{|q|^2-R^2}{8}}\, (1 +s)$.   Lemma \ref{l:fullAsmall} then implies that
  \begin{align}	\label{e:tauNw}
 	  |A(w, \cdot)| \leq C \, \e^{ \frac{|q|^2-R^2}{8}}\, (1 +s) \, .
 \end{align}
Since $w$ is parallel along $\gamma$, we see that $|\nabla_{\gamma'}w|=  |\nabla_{\gamma'}^{\perp} w| = |A (w, \gamma')|$ and, thus,
\begin{align}	\label{e:437}
	|w  - w(0)| \leq \int |\nabla_{\gamma'} w| \leq  C \, \e^{ \frac{|q|^2-R^2}{8}}\,    (1 +s^2)  \, .
\end{align}
Since   $w$ is tangential, we   have at $q$ that
\begin{align}
	 |w  - w(0)| \geq \min_{\nu \in T_q\Gamma_0} \, \, |\nu - w(0)| =  \left| v_i (q) - E_i (p) \right| \, .
\end{align}
Therefore, \eqr{e:vi1} follows from \eqr{e:437}.  To get \eqr{e:vi2}, observe that $|w  - v_i| \leq |w-w(0)|$ along $\gamma$ and, thus, 
\eqr{e:tauNw},  \eqr{e:437} and ($A\star$) give that
\begin{align}
	|A(v_i , \cdot)| \leq |A (w , \cdot)| + C \, |w-v_i| \leq C \, \e^{ \frac{|q|^2-R^2}{8}}\,    (1 +s^2)  \, .
\end{align}
   
   Let $\cV^{\perp} = \{ x \in \RR^N \, | \, \langle x , v \rangle = 0 {\text{ for all }} v \in \cV \}$ be the Euclidean orthogonal complement of $\cV$, set $\Gamma_0 = \cV^{\perp} \cap \Gamma$, and let $\cL_0$ 
   be the drift Laplacian on $\Gamma_0$.   The $\epsilon_0$ closeness to the cylinder guarantees that $\Gamma_0 \subset B_{2n}$.
   Now that we have \eqr{e:vi1} and \eqr{e:vi2}, Lemma \ref{l:430} gives that
\begin{align}
	\left| (\cL_0 +1) \, (2k- |z|^2 ) \right| &\leq 4n  \, |\phi|   +64 n^2 \,\delta  + 8n^2 (n-k) \, \delta^2   + 4n^2 \, \sup \left| A |_{\Gamma_0^{\perp}} \right|  \leq C \, \e^{ -\frac{ R^2}{8}} \, ,   \label{e:from430a} \\
	\left| (\cL_0 +1/2) \, z_j \right| &\leq   |\phi| + 2n\, \delta   + n \, \sup  \left| A |_{\Gamma_0^{\perp}} \right| + 4\, n \, \delta  \leq C \, \e^{ -\frac{ R^2}{8}} 
	\, .   \label{e:from430b}
\end{align}
 The $\epsilon_0$ closeness to the cylinder gives that $\cL_0$ is close to the Laplacian on  $\SS^k_{\sqrt{2k}}$.  The Laplacian  on $\SS^k_{\sqrt{2k}}$ does not have an eigenvalue at $1$ and has multiplicity $k+1$ for the eigenvalue $\frac{1}{2}$.  In particular, for $\epsilon_0$ small enough, linear elliptic theory gives:
 \begin{itemize}
 \item $\cL_0 + 1$ is invertible and for any function $u$ on $\Gamma_0$
 \begin{align}
 	\| u \|_{C^2(\Gamma_0)} \leq C \, \| (\cL_0 +1 ) u \|_{L^2(\Gamma_0)} \, .	\label{e:g0eiga}
 \end{align}
\item There is a $k+1$ dimensional space $\Lambda$ of functions on $\Gamma_0$ so that if $u$  satisfies $\int_{\Gamma_0} u \, \mu\, \e^{-f} = 0$ for all $\mu \in \Lambda$, then
\begin{align}
	\| u \|_{C^2(\Gamma_0)} \leq C \, \| (\cL_0 +1/2 ) u \|_{L^2(\Gamma_0)} \, . \label{e:g0eigb}
\end{align}
\end{itemize}
First, combine \eqr{e:from430a} and \eqr{e:g0eiga} to conclude that
\begin{align}
	\| 2k-|z|^2 \|_{C^2(\Gamma_0)} \leq C \, \| (\cL_0 +1 ) (2k-|z|^2) \|_{L^2(\Gamma_0)} \leq  C \, \e^{ -\frac{ R^2}{8}} \, .
\end{align}
Choose a codimension $(k+1)$ subspace $\cV_0$ in $\cV^{\perp}$ so that $\int_{\Gamma_0} \langle v , x \rangle\,  \mu\, \e^{-f} = 0 $
for each  $v \in \cV_0$  and all $ \mu \in \Lambda$.
Combine \eqr{e:from430b} and \eqr{e:g0eigb} to conclude that for each $v \in \cV_0$
\begin{align}
	\| \langle x , v \rangle \|_{C^2(\Gamma_0)} \leq C \, \| (\cL_0 + 1/2) \langle x , v \rangle \|_{L^2(\Gamma_0)} \leq C \, \e^{ -\frac{ R^2}{8}}  \, .
\end{align}
Rotate $\RR^N$ so that $\cV$ is the span of
$\partial_{k+2} , \dots , \partial_{n+1}$ and $\cV_0$ is the span of $\partial_{n+2} , \dots , \partial_N$, so that
\begin{align}
	\| 2k - \sum_{i=1}^{k+1} x_i^2 \|_{C^2(\Gamma_0)} &\leq C \, \e^{ -\frac{ R^2}{8}}  \, , 	\label{e:jnplus2p}\\
	\| x_j \|_{C^2(\Gamma_0)} &\leq C \, \e^{ -\frac{ R^2}{8}}  {\text { for each }} j \geq n+2 \, .	\label{e:jnplus2q}
\end{align}
We will extend these bounds off of $\Gamma_0$ using the almost translation invariance in $\cV$. 

Given a vector $v \in \cV$, let $\Gamma_v$ be the ``level set'' of $\Gamma$ at $v$ given by
\begin{align}
	\Gamma_v = \{ q \in \Gamma \, | \, (x_{k+2} (q) , \dots , x_{n+1}(q)) = v \} \, .
\end{align}
This is consistent with the previous definition of $\Gamma_0$.
We need to extend the bounds \eqr{e:jnplus2p} and \eqr{e:jnplus2q} along $\cV$.  
 We model $B_R \cap \Gamma$ on the product $\Gamma_0 \times (B_R^{\RR^{n-k}} \cap \cV)$.

Define the matrix $J_{ij}$, for $i,j = k+2 , \dots , n+1$,  at each point of $B_R \cap \Gamma$ by
\begin{align}
	J_{ij} = \langle \partial_i^T , \partial^T_j \rangle =  \langle \partial_i - \Pi ( \partial_i) ,   \partial_j - \Pi ( \partial_j)  \rangle \, .
\end{align}
By  \eqr{e:vi1},  $J_{ij}$ is close to the identity matrix $\delta_{ij}$
\begin{align}	\label{e:jacJoj}
	\left| J_{ij} - \delta_{ij} \right| \leq 2 \left|  \Pi (\partial_i) \right| + \left|  \Pi (\partial_j) \right| \leq C \, \e^{ \frac{|q|^2-R^2}{8}}\,    (1 +s^2)  \, .
\end{align}
As long as $C \, \e^{ \frac{|q|^2-R^2}{8}}\,    (1 +s^2) $ is small enough, then $J_{ij}$ has an inverse matrix $J^{ij}$ that is also close to the identity.  For each $i = k+2 , \dots , n+1$, define a vector field  $V^i$ by
\begin{align}
	V^i = \sum_{j=k+2}^{n+1} \, J^{ij} \,  \partial_j \, .
\end{align}
Thus, each $V^i$ is in $\cV$,   $|V^i - \partial^T_i| \leq C \, \e^{ \frac{|q|^2-R^2}{8}}\,    (1 +s^2) $  and  $\langle V^i , \partial^T_j \rangle = \delta_{ij}$.  Integrating $(V^i)^T$ gives a flow on $\Gamma$ that changes $x_i$ at speed one and fixes $x_j$ for $j \in \{ k+2 , \dots , n+1\} \setminus \{ i \}$.  These flows allow us to deform $\Gamma_0$ to $\Gamma_v$ for $v \in B_R \cap \cV$ by changing one coordinate at a time.  We will use the flows
 to extend \eqr{e:jnplus2p} and \eqr{e:jnplus2q}  to $\Gamma_v$ by bounding the derivatives    in the direction of $\cV$. First, 
   if $j \notin \{ k+2, \dots , n+1 \}$, then
\begin{align}
	\left| \nabla_{(V^i)^T} x_j \right| = \left| \langle (V^i)^T , \partial_j \rangle \right| = \left| \langle \Pi \left( V^i \right)  , \partial_j \rangle \right| \leq C \, \e^{ \frac{|q|^2-R^2}{8}}\,    (1 +s^2)  \, .
\end{align}
  Integrating this and using \eqr{e:jnplus2q} gives for $j \geq n+2$ that
  \begin{align}
  	|x_j(q)| \leq C \, \e^{ \frac{|q|^2-R^2}{8}}\,    (1 +s^3)  \, .
  \end{align}
  The extension of \eqr{e:jnplus2p} to $\Gamma_v$ follows similarly.  We conclude that $\Gamma$ is a graph  over the cylinder of some $U$ as long as
  $C \, \e^{ \frac{|q|^2-R^2}{8}}\,    (1 +s^3) $ is fixed small.  The proposition follows since we can absorb polynomial factors into the exponential as long as $R$ is sufficiently large.
 \end{proof}

\section{First variation of geometric quantities}	\label{s:s5}

The goal of the next two sections is to   bound  $\| P \|_{L^1}$ in terms of the distance to a cylinder and  $\phi$.  It will be crucial that this bound is better than quadratic in terms of the distance to a cylinder.   This will finally be achieved in Proposition \ref{l:PLone}.  In this section, we Taylor expand $P$ to second order.  We will   show that the first derivative vanishes, giving  a quadratic bound.  To beat this, we need to understand the second order term.  Unfortunately, the second variation of $P$ does not vanish.   However,   it does vanish in the directions of the Jacobi fields which will give just enough control to beat the quadratic bound.  In this, we have suppressed the distinction between pointwise and integral norms; this is dealt with in the next section.
    
  Let $\Sigma^n \subset \RR^N$ be a smooth closed  submanifold, not necessarily a shrinker,  and 
  \begin{align}
  	F: \Sigma \times \RR \to \RR^N
\end{align}
 a smooth mapping with $F(p,0) = p$.  This defines a one-parameter family $\Sigma_s = F(\Sigma, s)$ of submanifolds with $\Sigma_0 = \Sigma$.      Let $\Pi (\cdot)= \Pi (p,s)(\cdot)$ denote orthogonal projection onto the normal bundle for $\Sigma_s$ and $W^T = W - \Pi (W)$  denote tangential projection.

 Let $p_i$ be local coordinates  on $\Sigma$ and $\partial_i$ the corresponding coordinate vector fields.  Using $dF$, we  push this forward to a frame $F_i =dF (\partial_i)$ on $\Sigma_s$.  The metric $g_{ij} = g_{ij} (p,s)$ is  
  \begin{align}
  	g_{ij}= \langle F_i , F_j \rangle  \, .
  \end{align}
  We will use $g^{ij}$ to denote the inverse matrix to $g_{ij}$.
 In this section, we will compute the variations of various geometric quantities, including the   linearization of $\phi$:

  \begin{Pro}	\label{c:firstvarPHI}
  Let $V (p) = F_s (p,0)$.  
If $V^T = 0$, then 
   at $s=0$  
   \begin{align}	\label{e:Phi0}
  	  \phi_s =   L \, V -   F_j \, g^{ij} \, \langle    \nabla_{F_i}^{\perp} V , \phi     \rangle  	\, .
  \end{align}
   \end{Pro}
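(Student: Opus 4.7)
My plan is to compute $\phi_s$ at $s=0$ by splitting it into tangential and normal parts with respect to $\Sigma = \Sigma_0$; the two summands on the right of \eqr{e:Phi0} correspond to this decomposition, since $LV$ is normal to $\Sigma$ while the $F_j$-term is tangent.

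For the tangential part, I would use that $\phi$ is normal to $\Sigma_s$ for every $s$, i.e.\ $\langle \phi, F_i \rangle \equiv 0$. Differentiating this identity in $s$ at $s=0$, using $\partial_s F_i = \partial_i V$ together with $\phi^T = 0$, gives $\langle \phi_s, F_i \rangle = -\langle \phi, \nabla_{F_i}^{\perp} V \rangle$, and hence $(\phi_s)^T = -F_j \, g^{ij} \, \langle \nabla_{F_i}^{\perp} V, \phi \rangle$.

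For the normal part, I would treat $\tfrac{1}{2}(x^{\perp})_s$ and $\bH_s$ separately. Writing $x^{\perp} = x - g^{ij}\langle x, F_i\rangle F_j$ with $\partial_s x = V$ and $V^T = 0$, only the $\partial_j V$ factor contributes to the normal projection, and this produces $(x^{\perp})_s^{\perp} = V - \nabla_{x^T}^{\perp} V$. For $\bH_s$ I would carry out the standard first variation of the mean curvature vector under a normal deformation in flat $\RR^N$, using $\partial_s g_{ij}|_{s=0} = -2 A^V_{ij}$ (available thanks to $V^T = 0$, via the Gauss-type identity $\langle \partial_i V, F_j \rangle = -A^V_{ij}$), the relation $\partial_s \partial_i F_j = \partial_i \partial_j V$, and the $s$-dependence of $\Pi$. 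The outcome is $\bH_s^{\perp} = -\Delta^{\perp} V - \sum_{k,\ell} \langle V, A_{k\ell}\rangle A_{k\ell}$, so adding everything yields
$(\phi_s)^{\perp} = \tfrac{1}{2}V - \tfrac{1}{2}\nabla_{x^T}^{\perp} V + \Delta^{\perp} V + \sum_{k,\ell}\langle V, A_{k\ell}\rangle A_{k\ell}$,
which is precisely $LV$.

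The one nontrivial calculation is the first variation of $\bH$, where I must carefully track contributions from $\partial_s g^{ij}$, from $\partial_s \partial_i F_j$, and from $\partial_s \Pi$; the hypothesis $V^T = 0$ is used repeatedly to reduce what would otherwise be cross terms between tangential motion and normal variation to the single Gauss-type identity above. Once those ingredients are assembled, combining the normal and tangential pieces delivers \eqr{e:Phi0}.
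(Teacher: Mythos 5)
Your proof is correct and follows essentially the same route as the paper: both decompose $\phi_s$ into normal and tangential parts with respect to $\Sigma_0$, both rely on the first variation of $\bH$ (the paper's Lemma \ref{c:atzero}, which gives $-\bH_s = \Delta^{\perp} V + \langle V, A_{km}\rangle A_{km} + (\text{tangential})$) as the only nontrivial computation, and both get the normal part $(x^{\perp})_s^{\perp} = V - \nabla_{x^T}^{\perp} V$ the same way. The only genuine (and pleasant) simplification in your write-up is on the tangential side: instead of assembling the tangential terms of $(\Pi(F))_s$ from Corollary \ref{c:XprimeN} and of $\bH_s$ from Lemma \ref{c:atzero} and then noticing they combine into $-F_j\,g^{ij}\,\langle\nabla_{F_i}^{\perp}V,\phi\rangle$ via $\phi = \tfrac12 x^{\perp}-\bH$, you read off the tangential part directly from differentiating the constraint $\langle\phi,F_i\rangle\equiv 0$, using only $\partial_s F_i = \nabla_{F_i}V$ and the normality of $\phi$. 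That is a cleaner way to obtain $(\phi_s)^T$ and sidesteps tracking tangential pieces of the two intermediate variations; the normal-part computation is the same as the paper's in content.
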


\vskip1mm
The first term on the right in \eqr{e:Phi0} is the normal part, while the second is the tangent part which vanishes when $\phi = 0$ (i.e., at a shrinker).

  \subsection{The derivatives of $\Pi$ and $A$}

   The next lemma computes the derivative $\Pi_s$ of $\Pi$; the first term on the right is the normal part and the second is the tangent part.    
   
  \begin{Lem}	\label{l:diffPiN}
 Given a vector $W \in \RR^N$, we have
 \begin{align}
 	  \Pi_s (W) = - \Pi \, \left( \nabla_{W^T}  \, F_s \right)  - F_j \, g^{ij} \, \langle  \Pi (  \nabla_{F_i} F_s ) ,  W  \rangle \, .
 \end{align}
  \end{Lem}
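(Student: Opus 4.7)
The plan is to compute $\Pi_s(W)$ by differentiating an explicit formula for tangential projection. Since $\Pi + (I-\Pi)$ is the identity on $\mathbb{R}^N$ and $W$ is a fixed vector independent of $s$, we have
\begin{align*}
\Pi_s (W) = - \partial_s (W^T)\,,
\end{align*}
where $W^T = F_j \, g^{ij} \, \langle W, F_i \rangle$ is the tangential part expressed in the coordinate frame. Thus the task reduces to differentiating a product of $g^{ij}$, $\langle W, F_i\rangle$, and $F_j$, and then recognizing the result.

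The key inputs will be three identities. First, since $F$ is a smooth $\mathbb{R}^N$-valued map of two parameters, mixed partials commute, so $\partial_s F_i = \nabla_{F_i} F_s$. Second, the standard formula $\partial_s g^{ij} = - g^{ik} g^{j\ell} \, \partial_s g_{k\ell}$ together with $\partial_s g_{k\ell} = \langle \nabla_{F_k} F_s , F_\ell \rangle + \langle F_k , \nabla_{F_\ell} F_s\rangle$. Third, the fact that $\Pi$ is self-adjoint on $\mathbb{R}^N$, so $\langle \Pi(X) , Y \rangle = \langle X , \Pi(Y)\rangle$ for any $X, Y$.

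Carrying out the differentiation, one gets three groups of terms: a piece from $\partial_s g^{ij}$, a piece from $\partial_s \langle W, F_i\rangle = \langle W, \nabla_{F_i} F_s\rangle$, and a piece from $\partial_s F_j = \nabla_{F_j} F_s$. I would split $W = \Pi(W) + W^T$ everywhere it appears inside an inner product with $\nabla_{F_i} F_s$. The $W^T$ contributions coming from the derivative of $\langle W, F_i\rangle$ should cancel against the symmetric piece of $\partial_s g^{ij}$. What remains assembles into $\nabla_{W^T} F_s$ (via $g^{ij} \langle W, F_i\rangle \nabla_{F_j} F_s$) minus its tangential part (coming from the antisymmetric piece of $\partial_s g^{ij}$), leaving $-\Pi(\nabla_{W^T} F_s)$. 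The surviving $\Pi(W)$ piece is $-F_j g^{ij} \langle \Pi(W), \nabla_{F_i} F_s\rangle$, which by self-adjointness of $\Pi$ equals $-F_j g^{ij} \langle W, \Pi(\nabla_{F_i} F_s)\rangle$, matching the second term in the claim.

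The calculation is essentially bookkeeping; the one point requiring care is the cancellation between the tangential-tangential cross terms produced by $\partial_s g^{ij}$ and by $\partial_s \langle W, F_i\rangle$. This cancellation is what allows the final formula to depend only on the normal parts $\Pi(\nabla_{F_i} F_s)$ and on $\Pi(\nabla_{W^T} F_s)$. After that, the self-adjointness of $\Pi$ puts the answer into exactly the form stated.
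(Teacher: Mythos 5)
Your proof is correct, but it takes a genuinely different route from the paper. You compute $\Pi_s(W) = -\partial_s(W^T)$ directly by differentiating the explicit coordinate formula $W^T = g^{ij}\langle W, F_i\rangle F_j$ with the Leibniz rule, then organize the resulting terms: the derivative of $g^{ij}$ produces two pieces, one of which cancels against the $W^T$-part of the $\partial_s\langle W,F_i\rangle$ term, another of which combines with the $\partial_s F_j$ term to yield the normal projection $\Pi(\nabla_{W^T}F_s)$, and the surviving $\Pi(W)$-part becomes the second term after one application of self-adjointness. I checked the bookkeeping; it assembles as you describe. The paper's proof instead exploits the structural identities of orthogonal projections: differentiating $\Pi^2 = \Pi$ gives $\Pi\Pi_s\Pi = 0$ (which instantly kills the normal-to-normal block of $\Pi_s$), $\Pi(F_i)\equiv 0$ gives $\Pi_s(F_i) = -\Pi(\nabla_{F_i}F_s)$ and hence $\Pi_s$ on tangent vectors, and symmetry of $\Pi_s$ plus a frame pairing recovers $\Pi_s$ on normal vectors. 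The paper's argument avoids the metric-inverse differentiation and the cancellation bookkeeping entirely; your argument is more computational but requires no insight about projections beyond $\Pi^2=\Pi$ and self-adjointness, and it is entirely mechanical once one writes down $W^T$ in coordinates. One small stylistic caveat: your phrase about the ``symmetric'' and ``antisymmetric'' pieces of $\partial_s g^{ij}$ is misleading, since $\partial_s g_{k\ell}$ is symmetric; what you actually split are the two summands $\langle \nabla_{F_k}F_s, F_\ell\rangle$ and $\langle F_k, \nabla_{F_\ell}F_s\rangle$, and the identification of which one cancels which is best stated that way.
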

  
  \begin{proof}
  Since $\Pi$ is an orthogonal projection, $\Pi$ is symmetric and   $\Pi^2 = \Pi$.  Differentiating these gives that $\Pi_s$ is symmetric and
  $\Pi_s  \, \Pi + \Pi \,  \Pi_s = \Pi_s$.
  Multiplying on the right by $\Pi$ and using that $\Pi^2 = \Pi$ gives that
  \begin{align}	\label{e:PiPisPi}
  	\Pi \, \Pi_s  \, \Pi = 0 \, .
  \end{align}

To calculate $\Pi_s (W^T)$, we use that $\Pi (F_i) \equiv 0$ to get
\begin{align}
	\Pi_s (F_i) = - \Pi (F_{is}) =    - \Pi (  \nabla_{F_i} F_s ) \, ,
\end{align}
where the last equality is the chain rule.
By linearity, it follows that 
\begin{align}	\label{e:PisWT}
	\Pi_s  (W^T) = - \Pi \, \left( \nabla_{W^T}  F_s \right) \, .
\end{align}
  The normal part of $ \Pi_s \Pi (W) $ vanishes by \eqr{e:PiPisPi}.  To compute  the tangential  part of $ \Pi_s \Pi (W) $, we use that $\Pi_s$ is symmetric to get
\begin{align}
	\langle F_i , \Pi_s \Pi (W) \rangle = \langle \Pi_s (F_i ) ,  \Pi (W) \rangle = - \langle  \Pi (  \nabla_{F_i} F_s ) , \Pi (W) \rangle  = - \langle  \Pi (  \nabla_{F_i} F_s ) ,  W  \rangle \, .
\end{align}
It follows that
\begin{align}	\label{e:PisPiW}
	 \Pi_s \Pi (W) = ( \Pi_s \Pi (W) )^T = - F_j \, g^{ij} \, \langle  \Pi (  \nabla_{F_i} F_s ) ,  W  \rangle \, .
\end{align}
The lemma follows by combining \eqr{e:PisWT} and \eqr{e:PisPiW}.
  \end{proof}

 \begin{Cor}	\label{c:XprimeN}
  We have that  $  \left( \Pi (F) \right)_s  = \Pi (F_s) -  \Pi \, \left( \nabla_{F^T}  \, F_s \right)  - F_j \, g^{ij} \, \langle  \Pi (  \nabla_{F_i} F_s ) ,  F  \rangle$.
  \end{Cor}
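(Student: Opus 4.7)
The proof is a short product-rule calculation that reduces immediately to Lemma \ref{l:diffPiN}. I plan to write $\Pi(F)$ as the composition of two $s$-dependent objects and differentiate each factor separately.

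First I would note that $\Pi = \Pi(p,s)$ depends on $s$ because the normal bundle of $\Sigma_s$ varies with $s$, while the argument $F = F(p,s)$ also depends on $s$ by definition. Since $\Pi$ is linear in its argument, the product rule gives, at $s=0$,
\begin{equation*}
  \bigl(\Pi(F)\bigr)_s \;=\; \Pi_s (F) \;+\; \Pi(F_s),
\end{equation*}
where $\Pi_s$ denotes the derivative of the projection operator with $p$ held fixed, evaluated on the vector $F$.

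Next I would apply Lemma \ref{l:diffPiN} with the choice $W = F$. That lemma immediately produces
\begin{equation*}
  \Pi_s (F) \;=\; -\,\Pi\bigl(\nabla_{F^T} F_s\bigr) \;-\; F_j\, g^{ij}\, \bigl\langle \Pi(\nabla_{F_i} F_s),\, F \bigr\rangle .
\end{equation*}
Substituting this into the product-rule identity yields exactly the claimed formula.

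There is no real obstacle here: the only thing to check is that the product rule is being applied correctly, i.e.\ that the $s$-derivative of $\Pi$ at a fixed point really is $\Pi_s$ in the sense of Lemma \ref{l:diffPiN}, so that the two contributions $\Pi_s(F)$ and $\Pi(F_s)$ may be summed without any additional correction. Since $\Pi(p,s)$ is a smooth family of linear operators acting on the ambient vector $F(p,s) \in \mathbb{R}^N$, this is immediate and the corollary follows at once.
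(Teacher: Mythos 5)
Your proof is correct and matches the paper's argument exactly: both apply the product rule to write $\left(\Pi(F)\right)_s = \Pi(F_s) + \Pi_s(F)$ and then substitute Lemma \ref{l:diffPiN} with $W = F$.
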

  
  \begin{proof}
 This follows from Lemma \ref{l:diffPiN} since  	 $\left( \Pi (F) \right)_s =  \Pi (F_s)  + \Pi_s (F)  $.  \end{proof}

\begin{Lem}	\label{c:atzero}
Let $V (p) = F_s (p,0)$.  
If $V^T = 0$, then at $s=0$
\begin{align}
	-\bH_s &= \Delta^{\perp} V + g^{ik} A^V_{km}  g^{mj} \, A_{ij}    + F_k \, g^{mk} \, \langle     \nabla_{F_m} V   ,  \bH  \rangle \, .
\end{align}
\end{Lem}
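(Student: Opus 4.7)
}

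The starting point is to write the mean curvature in a general frame as
\begin{align*}
	\bH = - g^{ij} \, \Pi(F_{ij}) \, ,
\end{align*}
where $F_{ij} = \nabla_{F_i} F_j$ is computed in the flat ambient connection, and then differentiate this expression at $s=0$, producing three pieces:
\begin{align*}
	\bH_s = -g^{ij}_{\ s} \, \Pi(F_{ij}) \, - \, g^{ij} \, \Pi_s(F_{ij}) \, - \, g^{ij} \, \Pi(F_{ijs}) \, .
\end{align*}
Here $F_{ijs} = \nabla_{F_i} \nabla_{F_j} V$ since the ambient connection is flat and partial derivatives commute. Throughout, the assumption $V^T = 0$ will be used repeatedly, via the identity $\langle \nabla_{F_k} V , F_\ell \rangle = -\langle V , \nabla_{F_k} F_\ell \rangle = - A^V_{k\ell}$.

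The first step is to compute $g^{ij}_{\ s}$. From the displayed identity above, $(g_{k\ell})_s = -2 A^V_{k\ell}$, so $g^{ij}_{\ s} = 2 \, g^{ia} g^{jb} A^V_{ab}$. Contracting with $-\Pi(F_{ij}) = -A_{ij}$ yields $-2\,g^{ia} g^{jb} A^V_{ab} A_{ij}$, which is $-2T$ in the notation $T := g^{ik} A^V_{km} g^{mj} A_{ij}$. The second step is to insert $W = F_{ij}$ into Lemma \ref{l:diffPiN}, using that $F_{ij}^T = \nabla^\Sigma_{F_i} F_j$ is the Levi-Civita connection and that $\langle \Pi(\nabla_{F_k} V), F_{ij} \rangle = \langle \nabla^\perp_{F_k} V, A_{ij} \rangle$. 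After tracing against $-g^{ij}$ and using $g^{ij} A_{ij} = -\bH$, this contributes
\begin{align*}
	g^{ij} \, \nabla^\perp_{\nabla^\Sigma_{F_i} F_j} V \, - \, F_\ell \, g^{k\ell} \, \langle \nabla^\perp_{F_k} V , \bH \rangle \, .
\end{align*}

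The third step, computing $\Pi(F_{ijs})$, is where the subtle cancellation appears. Since $V$ is normal, decompose
\begin{align*}
	\nabla_{F_j} V = \nabla^\perp_{F_j} V \, - \, g^{mn} \, A^V_{jm} \, F_n \, ,
\end{align*}
with the tangential term forced by $\langle \nabla_{F_j} V, F_m \rangle = - A^V_{jm}$. Applying $\nabla_{F_i}$ and then $\Pi$, the scalar derivative of $g^{mn} A^V_{jm}$ falls on $F_n$ and drops out, leaving
\begin{align*}
	\Pi(\nabla_{F_i} \nabla_{F_j} V) = \nabla^\perp_{F_i} \nabla^\perp_{F_j} V \, - \, g^{mn} \, A^V_{jm} \, A_{in} \, .
\end{align*}
Contracting with $-g^{ij}$ produces $-g^{ij}\nabla^\perp_{F_i}\nabla^\perp_{F_j} V + T$ (after an index relabeling showing $g^{ij} g^{mn} A^V_{jm} A_{in} = T$).

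The final step is to assemble the three pieces. The pure $A\cdot A^V$ terms combine as $-2T + T = -T$. The second-derivative terms combine via
\begin{align*}
	\Delta^\perp V = g^{ij} \bigl( \nabla^\perp_{F_i} \nabla^\perp_{F_j} V \, - \, \nabla^\perp_{\nabla^\Sigma_{F_i} F_j} V \bigr) \, ,
\end{align*}
so the $\Pi_s$ contribution and the normal part of the $F_{ijs}$ contribution collapse to $-\Delta^\perp V$. The remaining $F_\ell g^{k\ell} \langle \nabla^\perp_{F_k} V, \bH\rangle$ term equals $F_k g^{mk} \langle \nabla_{F_m} V, \bH\rangle$ after relabeling and using that $\bH$ is normal. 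Multiplying through by $-1$ gives the stated identity.

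The main obstacle is purely bookkeeping: tracking signs through the three decompositions and, in particular, recognizing that the Piece 1 contribution $-2T$ and the tangential part of $\nabla_{F_j} V$ in Piece 3 combine into a single $-T$, which is precisely the algebraic source of the $\sum_{k,\ell}\langle V, A_{k\ell}\rangle A_{k\ell}$ term needed later for the identification $\bH_s \leftrightarrow L\,V$ in Proposition \ref{c:firstvarPHI}.
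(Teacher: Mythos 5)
Your proposal is correct and follows essentially the same computation as the paper's proof: differentiate $-\bH = g^{ij}\Pi(F_{ij})$ via the product rule into the three pieces $(g^{ij})_s\,\Pi(F_{ij})$, $g^{ij}\,\Pi_s(F_{ij})$, and $g^{ij}\,\Pi(F_{sij})$, use Lemma~\ref{l:diffPiN} on the middle piece, split $\nabla_{F_j}V$ into normal and tangential parts in the last piece, and reassemble the normal second derivatives into $\Delta^{\perp}V$. The bookkeeping of the $T = g^{ik}A^V_{km}g^{mj}A_{ij}$ terms and the sign conventions all check out, matching the paper's \eqr{e:qqq1}--\eqr{e:qqq3}.
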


\begin{proof}
Since the mean curvature $\bH$ of $\Sigma_s$ is minus the trace of $A$, we have
  \begin{align}
  	-\bH = g^{ij} \, \Pi \left( \nabla_{F_i} F_j \right) = g^{ij} \, \Pi \left( F_{ij} \right) \, .
  \end{align}
  Differentiating   gives
  \begin{align}	\label{e:PiHprimeN}
  	- \bH_s  &=   g^{ij} \,   \Pi_s \left( F_{ij} \right) + (g^{ij})_s \, \Pi \left( F_{ij} \right) + g^{ij} \,  \Pi \left( F_{sij} \right)   \, .
  \end{align}
Since $V$ is normal, we have
\begin{align}
	(g_{ij})_s = \langle F_{si} , F_j \rangle + \langle F_i , F_{sj} \rangle = - 2 \, \langle V , F_{ij} \rangle = - 2 A^V_{ij} \, .
\end{align}
It follows that
\begin{align}	\label{e:gijup}
	(g^{ij})_s = - g^{ik} (g_{km})_s g^{mj} = 2 g^{ik} A^V_{km}  g^{mj} \, .
\end{align}
Using this in the second term in \eqr{e:PiHprimeN} gives
\begin{align}	\label{e:qqq1}
	(g^{ij})_s \, \Pi \left( F_{ij} \right) = 2 g^{ik} A^V_{km}  g^{mj} \, A_{ij}  \, .
\end{align}
  Using Lemma \ref{l:diffPiN} on the first term in \eqr{e:PiHprimeN} gives
 \begin{align}	\label{e:qqq2}
 	  g^{ij} \, \Pi_s (F_{ij}) &= - g^{ij} \, \Pi \, \left( \nabla_{F_{ij}^T}  \, F_s \right)  - g^{ij} \,  F_k \, g^{mk} \, \langle  \Pi (  \nabla_{F_m} F_s ) ,  F_{ij}  \rangle \notag \\
	  &= - g^{ij} \,   \nabla_{F_{ij}^T}^{\perp}  \, V    + F_k \, g^{mk} \, \langle     \nabla_{F_m} V   ,  \bH  \rangle
	  \, .
 \end{align}
We rewrite the last term  in \eqr{e:PiHprimeN} as
\begin{align}
	 g^{ij} \,  \Pi \left( F_{sij} \right) &= g^{ij} \,  \Pi \left( \nabla_{F_i} \nabla_{F_j} V  \right) = g^{ij} \,   \nabla_{F_i}^{\perp} \nabla_{F_j}^{\perp} V   + g^{ij} \,   \nabla_{F_i}^{\perp} \nabla_{F_j}^T V   \, .
\end{align} 
To expand $ \nabla_{F_j}^T V$ in terms of the $F_k$'s, take the inner product
\begin{align}	\label{e:5p21}
	\langle F_k , \nabla_{F_j}^T V \rangle = \langle F_k , \nabla_{F_j}  V \rangle = - \langle F_{jk} ,   V \rangle \, , 
\end{align}
so we see that $\nabla_{F_j}^T V =  - F_{\ell} \, g^{\ell k} \langle F_{jk} ,   V \rangle$.  Plugging this in gives
\begin{align}	\label{e:qqq3}
	 g^{ij} \,  \Pi \left( F_{sij} \right)   &= g^{ij} \,   \nabla_{F_i}^{\perp} \nabla_{F_j}^{\perp} V   - g^{ij} \,   \nabla_{F_i}^{\perp} \left( F_{\ell} \, g^{\ell k} \langle F_{jk} ,   V \rangle \right)  \notag \\
	 &= g^{ij} \,   \nabla_{F_i}^{\perp} \nabla_{F_j}^{\perp} V   - g^{ij} \,   A_{i\ell} \, g^{\ell k} A^V_{jk}   \, .
\end{align} 
Inserting \eqr{e:qqq1}, \eqr{e:qqq2}   and \eqr{e:qqq3}    in \eqr{e:PiHprimeN} gives
\begin{align}
	-\bH_s &= 2 g^{ik} A^V_{km}  g^{mj} \, A_{ij} - g^{ij} \,   \nabla_{F_{ij}^T}^{\perp}  \, V    + F_k \, g^{mk} \, \langle     \nabla_{F_m} V   ,  \bH  \rangle +g^{ij} \,   \nabla_{F_i}^{\perp} \nabla_{F_j}^{\perp} V   - g^{ij} \,   A_{i\ell} \, g^{\ell k} A^V_{jk}   \notag \\
	&=   g^{ik} A^V_{km}  g^{mj} \, A_{ij}    + F_k \, g^{mk} \, \langle     \nabla_{F_m} V   ,  \bH  \rangle + \left( g^{ij} \,   \nabla_{F_i}^{\perp} \nabla_{F_j}^{\perp} - g^{ij} \,   \nabla_{F_{ij}^T}^{\perp}  \right) \, V     \, .\end{align}
	The last term in brackets is the normal Laplace operator and the lemma follows.
\end{proof}

   \begin{proof}[Proof of Proposition \ref{c:firstvarPHI}]
   Since $\phi = \frac{1}{2} \Pi (F) - \bH$,  Corollary \ref{c:XprimeN} and Lemma \ref{c:atzero}  give at $s=0$ that
   \begin{align}
   	\phi_s &=  \frac{1}{2} \, \left( V  -   \nabla_{x^T}^{\perp}  V  - F_j \, g^{ij} \, \langle    \nabla_{F_i}^{\perp} V  ,  x  \rangle \right) +  \Delta^{\perp} V + g^{ik} A^V_{km}  g^{mj} \, A_{ij}    + F_k \, g^{mk} \, \langle     \nabla_{F_m} V   ,  \bH  \rangle   \notag \\
		&= L \, V - \frac{1}{2} \, F_j \, g^{ij} \, \langle    \nabla_{F_i}^{\perp} V  ,  x  \rangle + F_k \, g^{mk} \, \langle     \nabla_{F_m} V   ,  \bH  \rangle \, .
   \end{align}
      \end{proof}

 For future reference, we also record the derivative of $A_{ij}$.

\begin{Lem}	\label{l:Aij}
Let $V (p) = F_s (p,0)$.  
If $V^T = 0$, then at $s=0$ and at a point where $F_i$'s are orthonormal and $\nabla_{F_i}^T F_j = 0$
\begin{align}
	(A_{ij})_s = - F_k \, \langle   \nabla_{F_k}^{\perp} V ,  A_{ij} \rangle + \nabla_{F_j}^{\perp} \nabla_{F_i}^{\perp}  V -    A^V_{ik}  \, A_{jk} \, .
\end{align}
\end{Lem}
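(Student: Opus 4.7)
The plan is to mimic the derivation of Lemma \ref{c:atzero} by writing $A_{ij} = \Pi(F_{ij})$ with $F_{ij} = \nabla_{F_i} F_j$ in the ambient flat connection, and applying the product rule under $\partial_s$. Since mixed partials commute, $(F_{ij})_s = F_{sij} = \nabla_{F_i}\nabla_{F_j} V$, so
\begin{align*}
(A_{ij})_s = \Pi_s(F_{ij}) + \Pi\bigl(\nabla_{F_i}\nabla_{F_j} V\bigr)\, ,
\end{align*}
and the task reduces to evaluating these two pieces at $s=0$ under the stated frame assumptions.

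For the first piece, at the chosen point $\nabla^T_{F_i} F_j = 0$, so $F_{ij} = A_{ij}$ is purely normal. I would apply Lemma \ref{l:diffPiN} with $W = A_{ij}$: since $W^T = 0$ the first term drops, and at the orthonormal point $g^{mk} = \delta_{mk}$, so the formula collapses to $\Pi_s(A_{ij}) = -F_k\langle \nabla^\perp_{F_k} V, A_{ij}\rangle$, which is already the first term of the statement.

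For the second piece, I would decompose $\nabla_{F_j} V$ into normal and tangent parts. The identity \eqref{e:5p21} from the proof of Lemma \ref{c:atzero} gives $\nabla^T_{F_j} V = -F_\ell g^{\ell k} A^V_{jk}$, which at the orthonormal point is $-\sum_k A^V_{jk} F_k$. For any normal vector field $W$ one checks $\Pi(\nabla_{F_i} W) = \nabla^\perp_{F_i} W$, so $\Pi\bigl(\nabla_{F_i}\nabla^\perp_{F_j} V\bigr) = \nabla^\perp_{F_i}\nabla^\perp_{F_j} V$. Differentiating the tangential piece gives $\nabla_{F_i}(A^V_{jk} F_k) = (F_i A^V_{jk}) F_k + A^V_{jk}\nabla_{F_i} F_k$; the first term is tangential, and the second contributes $A^V_{jk}\,\Pi(\nabla_{F_i} F_k) = A^V_{jk} A_{ik}$ under normal projection (again since $\nabla^T_{F_i} F_k = 0$ at the point). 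Assembling,
\begin{align*}
(A_{ij})_s = -F_k\langle \nabla^\perp_{F_k} V, A_{ij}\rangle + \nabla^\perp_{F_i}\nabla^\perp_{F_j} V - A^V_{jk} A_{ik}\, .
\end{align*}

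Finally, since $A_{ij}$ is symmetric in $i,j$, so is $(A_{ij})_s$, and since the first term on the right is already symmetric in $i,j$, I may swap $i\leftrightarrow j$ in the remaining two terms to recover the stated form; equivalently, the Ricci equation for the normal bundle (equation~$(15)$ of \cite{AB}) identifies $\nabla^\perp_{F_i}\nabla^\perp_{F_j} V - A^V_{jk} A_{ik}$ with $\nabla^\perp_{F_j}\nabla^\perp_{F_i} V - A^V_{ik} A_{jk}$. There is no substantive obstacle; the only care needed is in bookkeeping the normal/tangential decompositions and using the frame simplifications $g^{ij}=\delta_{ij}$ and $\nabla^T_{F_i} F_j = 0$ consistently throughout.
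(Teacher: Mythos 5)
Your proof is correct and follows essentially the same route as the paper: differentiate $A_{ij} = \Pi(F_{ij})$, apply Lemma~\ref{l:diffPiN} to the $\Pi_s(F_{ij})$ piece (using $F_{ij}^T = 0$ at the point), and expand the $\Pi(V_{ij})$ piece by splitting the inner covariant derivative into normal and tangent parts via \eqr{e:5p21}. The only divergence is cosmetic: the paper orders the second derivative as $\nabla_{F_j}\nabla_{F_i}V$ so that the stated form drops out directly, whereas you use $\nabla_{F_i}\nabla_{F_j}V$ and then invoke the symmetry of $(A_{ij})_s$ in $i,j$ (equivalently the Ricci equation) to swap indices, which is a harmless bookkeeping step.
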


\begin{proof}
Differentiating $A_{ij} = \Pi (F_{ij} )$,  using  $F_{ij}^T=0$, and then using Lemma \ref{l:diffPiN} gives
\begin{align}	\label{e:uselama}
	(A_{ij})_s = \Pi_s (F_{ij}) + \Pi (V_{ij}) = \Pi_s (A_{ij}) + V_{ij}^{\perp} =  V_{ij}^{\perp}  - F_k \, \langle   \nabla_{F_k}^{\perp} V ,  A_{ij} \rangle\, .
\end{align}
Since $V_i^T = - A^V_{ik} F_k$ by \eqr{e:5p21}, expanding $V_{ij}^{\perp}$ gives
\begin{align}
	V_{ij}^{\perp} & = \nabla_{F_j}^{\perp} \nabla_{F_i} V = \nabla_{F_j}^{\perp} \nabla_{F_i}^{\perp}  V - \nabla_{F_j}^{\perp} \left(   A^V_{ik} F_k \right) 
	=  \nabla_{F_j}^{\perp} \nabla_{F_i}^{\perp}  V -    A^V_{ik}  \, A_{jk} \, .
\end{align}
Using this in \eqr{e:uselama} gives the lemma.
\end{proof}

\subsection{The function $P$ on graphs over the cylinder}

In the next lemma, $P_U$ denotes the quantity $P$ on the graph of $U$ over the cylinder $\SS^k_{\sqrt{2k}} \times \RR^{n-k}$.

\begin{Lem}	\label{l:geeral}
There exist $C$,  $\delta > 0$ and a function $\cP$ so that if $\| U \|_{C^2} \leq \delta$ is any vector field (not necessarily normal), then $P_{U} = \cP (p,U, \nabla U , \nabla^2 U)$ where the function $\cP$ satisfies
\begin{align}	\label{e:PC3bound}
	\| \cP \|_{C^3} \leq C \, (1+ |p|^2) \, .
\end{align}
Moreover, if $U(s)$ is any one parameter family of vector fields with $U(0)=0$ so that $U'(0)$ is tangential, then 
\begin{align}	\label{e:tangential0}
	\frac{d}{ds} \big|_{s=0} \, P_{U(s)} = 0 \, .
\end{align}
\end{Lem}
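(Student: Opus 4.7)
The plan is to prove the two claims separately, using that $P$ in \eqref{e:definePhere} is a polynomial in the tensors $A$, $\bN = \bH/|\bH|$, and $x^T$ divided by a positive power of $|\bH|^2$.

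For the representation and $C^3$ bound, I would parametrize the graph $\Sigma_U$ over the cylinder by the base point $p$. The induced metric, tangent frame, normal projection, and hence $A$, $\bH$, $\bN$, and $x^T = (p+U)^T$ are all smooth universal functions of $(p, U, \nabla U, \nabla^2 U)$ near $U=0$. Since $|\bH|^2 = k/2$ is a positive constant on the cylinder, continuity gives $|\bH|^2 \geq k/4$ whenever $\|U\|_{C^2} \leq \delta$ for $\delta$ sufficiently small; every rational expression entering $P$ is then smooth in $(U, \nabla U, \nabla^2 U)$ on compact sets. The only $p$-dependence beyond smooth bounded coefficients comes through $x = p+U$ in the $x^T$-quadratic terms of $P$; since $|x^T| \leq |p| + |U|$, this produces at worst the factor $(1+|p|^2)$ claimed in \eqref{e:PC3bound} after taking up to three derivatives in $(U, \nabla U, \nabla^2 U)$.

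For the tangential vanishing \eqref{e:tangential0}, I would exploit that $P$ is a geometric invariant of the submanifold together with the codimension-one vanishing from Lemma \ref{l:PzeroHyper}. Write $U(s) = sV + O(s^2)$ with $V = U'(0)$ tangent to the cylinder at each point. The parametrization $\Phi_s(p) = p + U(s)(p)$ sends $\Sigma_0$ to $\Sigma_{U(s)}$, and its infinitesimal generator $\partial_s \Phi_s|_{s=0} = V$ is tangent to $\Sigma_0$; hence modulo $O(s^2)$ normal corrections $\Phi_s$ factors through the cylinder's intrinsic motion along $V$, and $\Sigma_{U(s)} = \Sigma_0 + O(s^2)$ as submanifolds. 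Therefore $P_{U(s)}(p) = P_0(\Phi_s(p)) + O(s^2)$, and since $\SS^k_{\sqrt{2k}} \times \RR^{n-k}$ lies in an affine $(n+1)$-plane, Lemma \ref{l:PzeroHyper} gives $P_0 \equiv 0$, so $\frac{d}{ds}\big|_{s=0} P_{U(s)} = V \cdot \nabla P_0 = 0$.

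The main obstacle I anticipate is making the reparametrization step fully rigorous, since $V$ is a general tangent field and one has to track how the affine graph $p + sV(p)$ differs from an intrinsic tangential flow along the cylinder at order $s^2$. If the geometric-invariance argument is viewed as too informal, the fallback is to differentiate the explicit representation $\cP(p, U, \nabla U, \nabla^2 U)$ directly along $(V, \nabla V, \nabla^2 V)$: after splitting each tensor variation into its tangential (Lie-derivative) and normal-variation pieces, the normal-variation contributions vanish because $V^{\perp} = 0$, and the remaining tangential contributions reassemble into the directional derivative of $P_0 \equiv 0$ on the cylinder, again yielding \eqref{e:tangential0}.
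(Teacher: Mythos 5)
Your proposal is correct and follows essentially the same route as the paper's proof: for the $C^3$ bound, you express each ingredient of $P$ ($g$, $\Pi$, $A$, $\bN$, $x^T$) as a smooth function of $(p,U,\nabla U,\nabla^2 U)$, note that all but the position vector are uniformly bounded near $U=0$, and track the quadratic $p$-dependence; for the tangential vanishing, you use that a tangential $U'(0)$ generates a first-order reparametrization of the cylinder, so the submanifold (hence the geometric invariant $P$, which vanishes on the cylinder by Lemma~\ref{l:PzeroHyper}) is unchanged to first order. The paper's proof is the same argument stated more tersely (it integrates $U'(0)$ to a diffeomorphism of the cylinder and invokes $P\equiv 0$ there), so no substantive difference.
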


\begin{proof}  
Each term in the definition  \eqr{e:definePhere} of $P$ is a function of the normal projection $\Pi$, the second fundamental form $A$, the induced metric $g$, and (on the second line of \eqr{e:definePhere}) the position vector $x$.  Each of these objects is a smooth function of $ (p,U, \nabla U , \nabla^2 U)$.  All but the position vector are bounded uniformly near the cylinder; the position vector comes in quadratically, so we get the bound \eqr{e:PC3bound}.

Suppose that $U(s)$ is a  one parameter family of vector fields with $U(0)=0$ so that $U'(0)$ is tangential.  Integrate $U'(0)$ (locally) to get 
  a one-parameter family of diffeomorphisms of the cylinder $\Psi (s)$ with $\Psi (0)$ equal to the identity so that the derivative at $0$ equal to $U'(0)$.  Since $P$ vanishes identically on the cylinder, we get \eqr{e:tangential0}.
\end{proof}

\subsection{The first variation of $P$}

We specialize   to the cylinder $\SS^k_{\sqrt{2k}} \times \RR^{n-k}$ with variations of the form $F_s(p,0) =  V(p)$ where $V = u \, \bN + u^{\alpha} \, \partial_{z_{\alpha}}$ is   normal.  Here, as in Section 
\ref{s:sectionJ}, $y_i$ are coordinates on the axis $\RR^{n-k}$ and $z_{\alpha}$ are  coordinates orthogonal to the cylinder.
 In this case,   
\begin{align}
	A_{ij} = - \frac{\bN}{\sqrt{2k}}\, \cg_{ij} \, , 
\end{align}
where $\cg_{ij}$ is the metric on the spherical factor $\SS^k_{\sqrt{2k}}$.

\vskip1mm
The next lemma shows that  $P'(0)=0$  for any normal variation.  It follows from this and \eqr{e:tangential0}  that     $  P'(0) = 0$  on $\SS^k_{\sqrt{2k}} \times \RR^{n-k}$
for all variations.

\begin{Lem}	\label{l:Pprime}
We have $P' (0)=0$.
\end{Lem}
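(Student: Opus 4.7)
The plan is to split $V$ and handle two cases, using that $P'(0)$ is linear in $V$ and depends only on $V=F_s(\cdot,0)$. By \eqr{e:tangential0} in Lemma~\ref{l:geeral} the tangential part of $V$ contributes nothing, so I may assume $V$ is normal. By linearity it then suffices to treat $V=u\,\bN$ and $V=\sum_\alpha u^\alpha\,\partial_{z_\alpha}$ separately.

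For the first case I would choose $F$ keeping every $\Sigma_s$ inside the ambient $(n+1)$-plane $\RR^{k+1}\times\RR^{n-k}$ containing the cylinder; since $\bN$ itself lies in that plane, such an $F$ exists (e.g.\ $F(p,s)=p+s\,u(p)\,\bN(p)$). Each $\Sigma_s$ is then a hypersurface inside this affine $(n+1)$-plane sitting in $\RR^N$, with $\bH\neq 0$ for small $s$, so Lemma~\ref{l:PzeroHyper} gives $P\equiv 0$ on every $\Sigma_s$ and hence $P'(0)=0$.

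For the transverse case $V=u^\alpha\,\partial_{z_\alpha}$ I would compute directly. The $\partial_{z_\alpha}$ are parallel in $\RR^N$ and orthogonal to $\bN$, so $\nabla^{\perp}V=(E_j u^\alpha)\,\partial_{z_\alpha}$ and $A^V\equiv 0$ on the cylinder. Lemmas~\ref{l:Aij} and~\ref{c:atzero} then yield $(A_{ij})'(0)=\Hess_{u^\alpha}(E_i,E_j)\,\partial_{z_\alpha}$ and $\bH'(0)=-(\Delta u^\alpha)\,\partial_{z_\alpha}$; in particular $(|\bH|^2)'(0)=2\langle\bH,\bH'\rangle=0$ and $\bN'(0)$ lies in $\mathrm{span}\{\partial_{z_\alpha}\}$. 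The first line of \eqr{e:definePhere} is a polynomial in the scalars $\langle A_{ij},A_{k\ell}\rangle$; at $s=0$ the $A_{ij}$'s point in $\bN$ while the $(A_{ij})'(0)$'s point in the $\partial_{z_\alpha}$ directions, so every such inner product has zero first derivative, and the same reasoning kills $(|A^{\bN}|^2)'(0)$ since $(A^{\bN}_{ij})'(0)=\langle A_{ij}',\bN\rangle+\langle A_{ij},\bN'\rangle=0$. Thus the first line of $P$ contributes nothing.

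For the second line, on the cylinder $x^T$ lies entirely in $\RR^{n-k}$ (the spherical position vector is purely normal to the sphere) while $A_{ij}=0$ whenever either index is an axis index, so $A(x^T,\cdot)=A^{\bN}(x^T,\cdot)=0$ at $s=0$. Differentiating $|A(x^T,\cdot)|^2$ and $|A^{\bN}(x^T,\cdot)|^2$ then pairs an $s=0$ value of $0$ with its $s$-derivative and gives zero, and the bounded prefactor $|A|^2/(4|\bH|^2)$ times a quantity that itself vanishes at $s=0$ also contributes no first-order term. The main thing to watch is that $\bN$ varies with $s$, so differentiating $A^{\bN}$ a priori picks up a $\langle A,\bN'\rangle$ term; this is harmless because $\bN'(0)$ lies in $\mathrm{span}\{\partial_{z_\alpha}\}$ while $A|_{s=0}$ points in $\bN$, so the extra piece is also zero. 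This completes the plan.
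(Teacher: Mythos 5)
Your proof is correct, and it takes a genuinely different route from the paper's. The paper proves the lemma by brute force: it fixes a general normal variation $V=u\,\bN+u^{\alpha}\,\partial_{z_{\alpha}}$, explicitly differentiates each term in \eqr{e:definePhere} using \eqr{e:conven1}--\eqr{e:lastbNp}, and then checks that the resulting combination of $\left(\Delta_{\theta}+\tfrac{1}{2}\right)u$ terms cancels. Your argument replaces that bookkeeping with a structural splitting: after discarding the tangential part via \eqr{e:tangential0} and using linearity of the first variation, the $u\,\bN$ component is killed immediately because the whole one-parameter family stays inside the affine $(n+1)$-plane and Lemma~\ref{l:PzeroHyper} makes $P$ vanish identically there; and the $u^{\alpha}\,\partial_{z_{\alpha}}$ component is killed by pure orthogonality, since $(g^{ij})'=0$, $A'_{ij}$ and $\bN'$ live in $\mathrm{span}\{\partial_{z_{\alpha}}\}$ while $A_{ij}$ and $\bN$ live in $\mathrm{span}\{\bN\}$, so every scalar building block $\langle A_{ij},A_{k\ell}\rangle$, $A^{\bN}_{ij}$, and $(A^2)_{ij}$ has vanishing first derivative, and the second line of $P$ vanishes to second order because $A(x^T,\cdot)=A^{\bN}(x^T,\cdot)=0$ on the cylinder. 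Conceptually this is the same mechanism the paper uses later in the proof of Corollary~\ref{c:2prime} (deforming inside an affine $(n+1)$-plane to exploit Lemma~\ref{l:PzeroHyper}); you have simply moved that idea one step earlier. Your route avoids the cancellation computation entirely and makes it transparent \emph{why} the first variation vanishes, at the cost of an explicit case split; the paper's computation has the side benefit of recording the formulas \eqr{e:Asqr}--\eqr{e:556q}, which it never reuses but which make the verification entirely mechanical. One small phrasing nit: in the last paragraph, the sentence about the ``bounded prefactor $|A|^2/(4|\bH|^2)$'' should say that in the product rule the term $(\text{prefactor})'\cdot(\text{bracket})(0)$ vanishes because the bracket is zero at $s=0$, while the other term vanishes because the bracket's derivative is zero; as written it reads as though the first fact implies the second, but the conclusion is still correct.
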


 Before proving Lemma \ref{l:Pprime}, we observe that the Hessian of $\cP$ vanishes on Jacobi fields.  To keep the notation concise, given a vector field $V$, let $\Hess_{\cP} (V , V)$ denote 
 \begin{align}
 	\Hess_{\cP} (V , V) = \partial^2 \cP \left( [V , \nabla V , \nabla^2 V] ,  [V , \nabla V , \nabla^2 V] \right) \, .
 \end{align}
 
\begin{Cor}	\label{c:2prime}
If   $L\, V=0$ and $\| V \|_{W^{2,2}} < \infty$, then $\Hess_{\cP} (V ,V) =0$ at $\SS^k_{\sqrt{2k}} \times \RR^{n-k}$.
\end{Cor}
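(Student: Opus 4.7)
The plan is, for each Jacobi field $V$ in the hypothesis, to exhibit a one-parameter family $U(s)$ of vector fields on $\Sigma = \SS^k_{\sqrt{2k}}\times\RR^{n-k}$ with $U(0)=0$, $U'(0)=V$, and $\cP(p,U(s),\nabla U(s),\nabla^2 U(s))\equiv 0$ for all small $s$. Combined with the fact that the first variation $d\cP|_0$ vanishes on any $2$-jet of the form $(W,\nabla W,\nabla^2 W)$ --- which is exactly Lemma \ref{l:Pprime} in the normal case and \eqr{e:tangential0} of Lemma \ref{l:geeral} in the tangential case --- Taylor expanding to second order in $s$ yields
\begin{align*}
0 &= \frac{d^2}{ds^2}\bigg|_{s=0}\cP(U(s)) \\
&= \Hess_\cP(V,V)+d\cP|_0\bigl(U''(0),\nabla U''(0),\nabla^2 U''(0)\bigr) \\
&= \Hess_\cP(V,V),
\end{align*}
the final term being zero upon applying the first-variation identity to the auxiliary family $\tilde U(s)=sU''(0)$.

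To construct the family, I would decompose $V=\bar V^\perp + V_{\rm pol}$ as in Corollary \ref{c:jacobi}, where $R_s=e^{s\bar V}$ is the one-parameter group of rigid rotations of $\RR^N$ generated by $\bar V$ and $V_{\rm pol}=\{\sum_{i\le j}b_{ij}(y_iy_j-2\delta_{ij})\}\bN$ is a polynomial in the axis coordinates times the principal normal. Let $\Sigma'_s$ denote the normal graph of $sV_{\rm pol}$ over $\Sigma$. The key geometric observation is that $\bN$ lies inside the $\RR^{k+1}$ factor of $\RR^{k+1}\times\RR^{n-k}$, so $\Sigma'_s$ stays inside the affine $(n+1)$-plane $\RR^{k+1}\times\RR^{n-k}\subset\RR^N$ and is therefore a codimension-one submanifold of this plane. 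Since $|\bH|$ is close to $1/\sqrt{2}$ for small $s$, Lemma \ref{l:PzeroHyper} applies and gives $P\equiv 0$ on $\Sigma'_s$. Now set $\Sigma_s=R_s(\Sigma'_s)$: because rigid motions of $\RR^N$ act isometrically on both the position vector and the normal-bundle-valued second fundamental form, every term in the definition \eqr{e:definePhere} of $P$ is preserved, so $P\equiv 0$ on $\Sigma_s$ as well.

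It remains to identify $\Sigma_s$ as the normal graph of some $U(s)$ with $U'(0)=V$. Since $\Sigma_s$ is a smooth small perturbation of $\Sigma$ near any fixed $p\in\Sigma$, the implicit function theorem produces such a $U(s)$ on a neighborhood of $p$; the initial normal velocity there equals the normal component of $\frac{d}{ds}|_{s=0}R_s(p+sV_{\rm pol}(p))=\bar V(p)+V_{\rm pol}(p)$, which equals $\bar V^\perp(p)+V_{\rm pol}(p)=V(p)$ because $V_{\rm pol}$ is already normal. This is enough, since $\Hess_\cP(V,V)$ depends only on the $2$-jet of $V$ at $p$, so local graphicality suffices pointwise on $\Sigma$.

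The main obstacle I expect is simply careful bookkeeping around the vanishing of $d\cP|_0$, i.e.\ recognizing that $U''(0)$ is itself a vector field on the cylinder so that Lemmas \ref{l:Pprime} and \ref{l:geeral} reapply to its jet. The substantive geometric inputs --- that $P$ is preserved by ambient isometries and that $P$ vanishes identically on any codimension-one submanifold --- are then exactly what is needed, and by polarization the resulting pointwise vanishing of $\Hess_\cP(V,V)$ for each Jacobi field promotes to the vanishing of the full symmetric form $\Hess_\cP$ on the space of Jacobi fields.
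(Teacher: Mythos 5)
Your proposal is correct and follows essentially the same strategy as the paper: decompose $V$ via Corollary \ref{c:jacobi} into a rotation part and a polynomial-times-$\bN$ part, observe that the latter keeps the variation codimension one so $P\equiv 0$ by Lemma \ref{l:PzeroHyper}, compose with the rotation group, and Taylor expand using that $\cP$ and $\nabla\cP$ vanish at the cylinder. The only (minor) difference is organizational — you reparametrize so the variation is a normal graph with $U'(0)=V$, whereas the paper keeps the parametrized family $\bar{F}(s)$ and notes directly that tangential velocity components do not affect $\Hess_{\cP}$ — but both handle the same bookkeeping point and the substantive inputs coincide.
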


\begin{proof}
By Corollary \ref{c:jacobi},  there are constants $b_{ij}$ and a rotation vector field $\bar{V}$ so that
\begin{align}	
	V = \bar{V}^{\perp} + \left\{  \sum_{i \leq j} b_{ij} (y_i y_j - 2 \, \delta_{ij}) \right\} \, \bN 	\, .
\end{align}
Let $\cR(s)$ be the one-parameter family of rotations generated by $\bar{V}$ with $\cR (0)$ equal to the identity.  Next, let $G(s)$ be the variation of $\Sigma$ generated by $ \left\{  \sum_{i \leq j} b_{ij} (y_i y_j - 2 \, \delta_{ij}) \right\} \, \bN$.  Note that $G(s)$ is contained in $\RR^{n+1}$ and, thus, the variation 
\begin{align}
	\bar{F}(s) = \cR(s) \left[ G(s) \right] 
\end{align}
is a hypersurface in the affine space $G(s)[ \RR^{n+1}]$.  In particular, $P(\bar{F}(s)) \equiv 0$ for all $s$ by Lemma \ref{l:PzeroHyper}.
It follows that
\begin{align}
	0 = \frac{d^2}{ds^2} \big|_{s=0} P(\bar{F}(s)) = \langle \nabla \cP , \bar{F}_{ss} \rangle + \Hess_{\cP} (\bar{F}_s , \bar{F}_s) =  \Hess_{\cP} (\bar{F}_s , \bar{F}_s) \, ,
\end{align}
where the last equality used that $\nabla \cP = 0$ at $0$ by Lemma \ref{l:Pprime}.
Observe that tangential variations do not change $\Hess_{\cP}$ at $s=0$ (since $\cP$ and $\nabla \cP$ vanish there) and, moreover,
\begin{align}
	\bar{F}_s^{\perp}   = \left( \cR_s (0) (F) \right)^{\perp} + G_s^{\perp} = V \, ,
\end{align}
so we conclude that $\Hess_{\cP} (V ,V) =0$.  
\end{proof}

\begin{proof}[Proof of Lemma \ref{l:Pprime}]
We will do the calculations in coordinates at a point where the $F_i$'s are orthonormal and $\nabla_{F_i}^T F_j = 0$.  To keep notation short, we will let primes denote $s$ derivatives.
Since $\bN$ and $\partial_{z_{\alpha}}$ are parallel and (pointwise) orthonormal, we see that 
\begin{align}
	\nabla_{F_k}^{\perp} V &= u_k \, \bN + u^{\alpha}_k \, \partial_{z_{\alpha}} \, , \\
	 \nabla_{F_j}^{\perp} \nabla_{F_i}^{\perp}  V &= u_{ij} \, \bN + u^{\alpha}_{ij} \, \partial_{z_{\alpha}} \, .
\end{align}
Using these, Lemma \ref{l:Aij} and $A_{ij} = - \frac{\bN}{\sqrt{2k}}\, \cg_{ij}$, we get 
\begin{align}	\label{e:onemoreAijp}
	A_{ij}' &= - F_k \, \langle   \nabla_{F_k}^{\perp} V ,  A_{ij} \rangle + \nabla_{F_j}^{\perp} \nabla_{F_i}^{\perp}  V -    A^V_{ik}  \, A_{jk}  
	=  \frac{\cg_{ij}\nabla u }{\sqrt{2k}}  +  \left( u_{ij}  -    \frac{u\,  \cg_{ij}}{2k} \right)   \bN + u^{\alpha}_{ij} \, \partial_{z_{\alpha}}  \, .
\end{align}
We conclude that
  \begin{align}	\label{e:conven1}
 	\langle A_{ij} , A_{m\ell} \rangle &= \frac{1}{2k} \, \cg_{ij} \cg_{m\ell} 
	\, , \\
 	\langle A_{ij}' , A_{m\ell} \rangle &= - \langle    A_{ij}' , \bN \rangle \, \frac{1}{\sqrt{2k}} \cg_{m\ell}  = - \left( u_{ij}  -    \frac{u}{2k}\,  \cg_{ij} \right) \frac{1}{\sqrt{2k}} \cg_{m\ell} \, . \label{e:conven2}
 \end{align}
 
Equation \eqr{e:gijup} and  $A_{ij} = - \frac{\bN}{\sqrt{2k}}\, \cg_{ij}$ give that
\begin{align}	\label {e:gijupHere}
	(g^{ij})' =2 A^V_{ij}  = -\frac{2 \, u \, \cg_{ij}}{\sqrt{2k}} \, .
\end{align}
Note that $|\bH| = \frac{\sqrt{k}}{\sqrt{2}}$.
Lemma \ref{c:atzero} gives that
\begin{align}
	-\bH' 
	 &= \frac{\sqrt{k}}{\sqrt{2}} \, \nabla u + \left( \Delta u   + \frac{u}{2} \right) \, \bN + (\Delta u^{\alpha})  \, \partial_{z_{\alpha}} 
	 \, .
\end{align}
We compute the derivatives of $|\bH|$ and $\bN = \frac{\bH}{|\bH|}$
\begin{align}
	|\bH|' &= \frac{ (|\bH|^2)'}{2|\bH|} = \langle \bN , \bH' \rangle = -\Delta u - \frac{u}{2} \, , \\
	\bN' &= \frac{\bH'}{|\bH|} - \bH \frac{|\bH|'}{|\bH|^2} =  - \nabla u   -\frac{\sqrt{2}}{\sqrt{k}} \, (\Delta u^{\alpha})  \, \partial_{z_{\alpha}}  \, .	\label{e:lastbNp}
\end{align}
We now start differentiating the parts of $P$ using \eqr{e:conven1}--\eqr{e:lastbNp} and the symmetry of $A$ and $g$.  First, we have
\begin{align}
	(|A|^2)' &= \left( g^{im} g^{j\ell} \langle A_{ij} , A_{m\ell} \rangle  \right)' = 2 \langle A_{ij}' , A_{ij} \rangle + 2\langle A_{ij} , A_{mj} \rangle (g^{im})'
	  \notag \\
	&= -     \left( u_{ij}  -    \frac{u}{2k}\,  \cg_{ij} \right)     \frac{\sqrt{2}}{\sqrt{k}}\, \cg_{ij}  +  \frac{1}{k} \, \cg_{im} 
	\left( -\frac{2 \, u \, \cg_{im}}{\sqrt{2k}} \right)   \, .
\end{align}
Simplifying this gives
\begin{align}	\label{e:Asqr}
	   \frac{\sqrt{k}}{\sqrt{2}} \, (|A|^2)' & = -     \left( u_{ij}  \cg_{ij}   -    \frac{u}{2}  \right)       -u = -     \left( u_{ij}  \cg_{ij}   +    \frac{u}{2}  \right)
	   = -     \left( \Delta_{\theta} +    \frac{1}{2}  \right)u
	  \, .
\end{align}
Next, we turn to $|A^{\bN}|^2 = \langle A_{ij} , \bN \rangle \langle A_{\ell m} , \bN \rangle g^{i\ell} g^{jm}$.  We get that
\begin{align}
	\langle A_{ij} , \bN \rangle' &= \langle A_{ij}' , \bN \rangle + \langle A_{ij} , \bN' \rangle =   \left( u_{ij}  -    \frac{u}{2k}\,  \cg_{ij} \right)  \, .
\end{align}
Thus, we have
\begin{align}
	\left( |A^{\bN}|^2 \right)' &= 2\,  \langle A_{ij} , \bN \rangle' \langle A_{ij} , \bN \rangle  + 2\langle A_{ij} , \bN \rangle \langle A_{i m} , \bN \rangle  ( g^{jm})' \notag \\
	&= -2\, \left( u_{ij}  -    \frac{u}{2k}\,  \cg_{ij} \right) \frac{1}{\sqrt{2k}}\, \cg_{ij}
	  + 2 \left( \frac{1}{\sqrt{2k}}\, \cg_{ij} \right) \left( \frac{1}{\sqrt{2k}}\, \cg_{im} \right)  
	  \left( -\frac{2 \, u \, \cg_{jm}}{\sqrt{2k}}
	  \right) \\
	  &= -  \left( u_{ij} \, \cg_{ij} -    \frac{u}{2}  \right) \frac{\sqrt{2}}{\sqrt{k}}
	   -\frac{\sqrt{2}\, u \,  }{\sqrt{k}}
	  = -    \frac{\sqrt{2}}{\sqrt{k}}  \left( \Delta_{\theta} +    \frac{1}{2}  \right)u
  \notag \, .
\end{align}
Since $|A|^2 = |A^{\bN}|^2 = \frac{1}{2}$ on the cylinder, combining these gives
\begin{align}	\label{e:550q}
	\left( |A|^2 \, |A^{\bN}|^2 \right)' & = \frac{1}{2} \, \left(  ( |A|^2)'    +  (  |A^{\bN}|^2  )'   \right)  =  -    \frac{\sqrt{2}}{\sqrt{k}}  \left( \Delta_{\theta} +    \frac{1}{2}  \right)u \, . 
\end{align}
We turn next to $(A^2)_{ij} = \langle A_{im} , A_{\ell j} \rangle g^{m\ell}$.  We have
\begin{align}	\label{e:A2ijpri}
	(A^2)_{ij}' &= \langle A_{im}' , A_{m j} \rangle  +  \langle A_{im} , A_{m j}' \rangle  +  \langle A_{im} , A_{\ell j} \rangle (g^{m\ell})' \notag \\
	&= -\left( u_{im} \cg_{mj}  -    \frac{u}{2k}\,  \cg_{ij} \right) \frac{1}{\sqrt{2k}}\, 
	- \left( u_{mj}  \cg_{im}  -    \frac{u}{2k}\,  \cg_{ij} \right)  \frac{1}{\sqrt{2k}}\,
	- \frac{1}{{k}}\,   \frac{u}{\sqrt{2k}} \cg_{ij}   \\
	& =  -\left( u_{im} \cg_{mj} + u_{mj}  \cg_{im}    \right) \frac{1}{\sqrt{2k}} 		
	\notag \, .
\end{align}
Note that $A^2 = \frac{\cg}{2k}$ on the cylinder.  Using the symmetry of $A^2$ and $g$, \eqr{e:A2ijpri} gives that
\begin{align}	\label{e:552q}
	\left( \left| A^2 \right|^2 \right)' &= 2 \, (A^2)_{ij}' A^2_{ij} + 2 \, A^2_{ij} A^2_{i\ell} (g^{j\ell})' =
	-  \left( u_{im} \cg_{mj} + u_{mj}  \cg_{im}    \right) \frac{1}{\sqrt{2k}} 	 \frac{\cg_{ij}}{k} -     \frac{ \cg_{j\ell}}{2k^2}   \left( \frac{2 \, u \, \cg_{j\ell}}{\sqrt{2k}} \right) \notag \\
	&=  -\frac{\sqrt{2}}{k\sqrt{k}} \left( \Delta_{\theta} + \frac{1}{2} \right) \, u   \, .
\end{align}
Since  $A(x^T , \cdot ) = 0$ on the cylinder, it follows that  
\begin{align}	\label{e:553q}
	\left( \frac{|A|^2}{4|\bH|^2} \, \left[  \left| A^{\bN} (x^T , \cdot)\right|^2 - \left| A  (x^T , \cdot)\right|^2 \right] \right)' = 
	0 \, .
\end{align}
 Next, using the symmetry of $A$ and $g$,  \eqr{e:conven1} and \eqr{e:conven2} give
  \begin{align}	\label{e:554q}
 	\left( |\langle A_{ij} ,  A_{m\ell} \rangle|^2 \right)' &= 4\, \langle A_{ij}' , A_{m\ell} \rangle \langle A_{ij} , A_{m\ell} \rangle + 
	4\,  \langle A_{ij}, A_{m\ell} \rangle \langle A_{ij} , A_{md} \rangle (g^{\ell d})' \notag \\
	&=  - 4 \left( u_{ij}  -    \frac{u}{2k}\,  \cg_{ij} \right) \frac{1}{\sqrt{2k}} \cg_{m\ell}  \frac{1}{2k} \, \cg_{ij} \cg_{m\ell} 
 	 + 4\,  \frac{1}{2k} \, \cg_{ij} \cg_{m\ell} \,  \frac{1}{2k} \, \cg_{ij} \cg_{md} \, \left(- \frac{2u \cg_{\ell d}}{\sqrt{2k}}   \right) 
	\\
	&= -  \left( \Delta_{\theta} u  -    \frac{u}{2}  \right) \frac{\sqrt{2}}{\sqrt{k}}     
 	 -       \, \left( \frac{2u  }{\sqrt{2k}}   \right) =  -  \left( \Delta_{\theta} u  +    \frac{u}{2}  \right) \frac{\sqrt{2}}{\sqrt{k}}   
	 \notag \, .
 \end{align}
  For the remaining term, we compute
  \begin{align}
  	 & \left( \langle A_{j_1 \ell_1}   , A_{i_1 m_1}  \rangle  \, \langle A_{\ell_2 m_2} , A_{i_2 j_2} \rangle g^{m_1 m_2} g^{j_1 j_2}g^{\ell_1 \ell_2} g^{i_1i_2} \right)'   \notag  \\
  	 & \qquad =
	 4\,  \langle A_{j \ell}   , A_{im}  \rangle  \, \langle A_{\ell m} , A_{ij}' \rangle  +   4\, \langle A_{j \ell}   , A_{im_1}  \rangle  \, \langle A_{\ell m_2} , A_{ij} \rangle (g^{m_1 m_2} )'   \label{e:556q} \\
	 &\qquad = -\frac{2}{k} \,  \cg_{j \ell}    \cg_{im}      \left( u_{ij}  -    \frac{u}{2k}\,  \cg_{ij} \right) \frac{1}{\sqrt{2k}} \cg_{m\ell}   -
	      \frac{\sqrt{2}u  }{k\sqrt{k}}   	 =	-\frac{\sqrt{2}}{k\sqrt{k}} \left( \Delta_{\theta} + \frac{1}{2} \right) \, u   \, . \notag 
  \end{align}
Adding \eqr{e:550q}, \eqr{e:553q}, and twice \eqr{e:556q} and then subtracting  \eqr{e:554q} and twice \eqr{e:552q}  gives  that $P'=0$, completing the proof.
\end{proof}

 \section{Estimates for entire graphs}	\label{s:s6}
 
 In this section, $\Sigma^n = \SS^k_{\sqrt{2k}} \times \RR^{n-k} \subset \RR^N$ is a fixed cylinder and all constants $C$ will be allowed to depend on  $N$.  
 Given a normal vector field $U$ on $\Sigma$, let $\Sigma_U$ denote its graph and let $\phi_U$ and $P_U$ be the quantities $\phi$ and $P$ on $\Sigma_U$.

The main result of this section is the next proposition that gives a  bound for $\| P_U \|_{L^1}$ that is essentially quadratic in $\phi_U$ and better than quadratic in $\| U \|_{L^2}$.  This will be crucial in the improvement step in the next section, where we combine it with Theorem \ref{c:kappa} to  bound   $\nabla \tau$.
 
 \begin{Pro}	\label{l:PLone}
 There exists $C$ and $\bar{\epsilon}>0$,  depending on $N$,  so that if $\epsilon_0 \leq \bar{\epsilon}$, then 
 for any $\kappa \in (0,1]$ there is a constant $C_{\kappa} = C_{\kappa}(\kappa , N)$ so that 
        \begin{align}
        		\| P_{U} \|_{L^1} \leq C_{\kappa} \, \| U \|_{L^2}^3  + C_{\kappa} \, \| \phi_U \|_{L^2}^{ \frac{6}{3+\kappa}}
		+ C \, \| U \|_{L^2} \,  \|  \phi_{U} \|_{L^2}  +  C \,  \|  \phi_{U} \|_{W^{1,2}}^2
		\, .
	\end{align}
 \end{Pro}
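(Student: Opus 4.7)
The plan is to Taylor-expand $P_U$ around $U=0$ and to exploit the vanishing of the first two orders when evaluated on Jacobi fields. By Lemma~\ref{l:geeral}, $P_U=\cP(p,U,\nabla U,\nabla^2 U)$ with $\|\cP\|_{C^3}\le C(1+|p|^2)$, and $P$ vanishes identically on the cylinder. By Lemma~\ref{l:Pprime} together with the invariance \eqr{e:tangential0} from Lemma~\ref{l:geeral}, the full first-order term of $\cP$ at $0$ vanishes, so
\begin{align*}
P_U=\tfrac12\,\Hess_{\cP}(U,U)+R(U),\qquad |R(U)|\le C\,(1+|p|^2)\,(|U|+|\nabla U|+|\nabla^2 U|)^3.
\end{align*}

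Next I would split $U=J+W$, where $J$ is the $L^2$-orthogonal projection of $U$ onto the space of Jacobi fields of Proposition~\ref{l:jacobi}. Corollary~\ref{c:eff} gives $\|W\|_{W^{2,2}}\le C\|L\,U\|_{L^2}$, and by Proposition~\ref{c:firstvarPHI} (applied at the cylinder, where $\phi=0$) the linearization of $\phi$ is $L$, so a Taylor expansion yields $\|L\,U\|_{L^2}\le \|\phi_U\|_{L^2}+C\,\epsilon_0\,\|U\|_{W^{2,2}}$; the error is absorbed using the $C^2$-smallness of $U$, leaving $\|W\|_{W^{2,2}}\lesssim \|\phi_U\|_{L^2}$ modulo terms already present in the desired bound.

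I would then expand bilinearly $\Hess_{\cP}(U,U)=\Hess_{\cP}(J,J)+2\,\Hess_{\cP}(J,W)+\Hess_{\cP}(W,W)$. The crucial cancellation $\Hess_{\cP}(J,J)=0$ from Corollary~\ref{c:2prime} kills the entire quadratic contribution along the Jacobi directions. For the cross term, Lemma~\ref{l:Jbound} bounds $|J|+|\nabla J|+|\Hess_J|$ pointwise by $C(1+|x|^2)\|J\|_{L^2(B_{\sqrt{2n}+1})}\le C(1+|x|^2)\|U\|_{L^2}$; after absorbing the polynomial factor into a slightly weaker Gaussian exponent, Cauchy--Schwarz gives
\begin{align*}
\int |\Hess_{\cP}(J,W)|\,\e^{-f}\lesssim \|U\|_{L^2}\,\|W\|_{W^{2,2}}\lesssim \|U\|_{L^2}\,\|\phi_U\|_{L^2},
\end{align*}
which is the third term of the target inequality. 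Likewise $\int|\Hess_{\cP}(W,W)|\,\e^{-f}\lesssim \|W\|_{W^{2,2}}^2\lesssim \|\phi_U\|_{W^{1,2}}^2$, which is the fourth term.

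The main obstacle is the cubic remainder $R(U)$: a naive pointwise estimate using $\|U\|_{C^2}\le\epsilon_0$ only returns $\epsilon_0\|U\|_{W^{2,2}}^2$, which is merely quadratic in $U$. The fix is again to insert $U=J+W$ inside $R$ and to estimate each cubic monomial separately. Pure $J^3$ monomials are handled by Lemma~\ref{l:Jbound}, which reduces every derivative of $J$ to $\|U\|_{L^2}$ times a polynomial in $|p|$ that is absorbed into the Gaussian weight; this yields the $C_\kappa\|U\|_{L^2}^3$ term. Monomials involving $W$ are handled by a weighted Gagliardo--Nirenberg-type interpolation between the pointwise $C^2$-smallness of $W$ and its weighted $L^2$ bound (via elliptic regularity for $L$ and the $W^{2,2}$ bound above); absorbing the polynomial factor $(1+|p|^2)$ from $\|\cP\|_{C^3}$ costs a small amount of Gaussian decay, which is reflected in the exponent $\tfrac{6}{3+\kappa}$ on $\|\phi_U\|_{L^2}$ and in the constant $C_\kappa$. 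The hardest part of the proof is thus calibrating this interpolation so that the exponent comes out better than the naive $2$ (which would ruin the absorption step in \eqr{e:herewego} of the overview), and so that all polynomial weights are safely traded against Gaussian decay.
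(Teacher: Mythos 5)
Your proposal follows essentially the same route as the paper's proof of Proposition \ref{l:PLone}: Taylor expand $P_U$ using the $C^3$ bound on $\cP$ from Lemma \ref{l:geeral} and the first-order vanishing from Lemma \ref{l:Pprime}, decompose $U=J+h$ with $h$ (your $W$) controlled via Corollary \ref{c:eff} and Proposition \ref{p:gradLoj}, kill $\Hess_\cP(J,J)$ by Corollary \ref{c:2prime}, bound the cross and pure-$h$ quadratic terms, and close the cubic remainder by a Young/H\"older trade-off that produces the $\kappa$-dependent exponents. One point where your sketch is too quick: the bound $\int|\Hess_\cP(W,W)|\,\e^{-f}\lesssim\|W\|_{W^{2,2}}^2$ ignores the polynomial weight $(1+|x|^2)$ coming from $\|\cP\|_{C^3}$, and the weighted quantity $\int(1+|x|^2)|h|_2^2\,\e^{-f}$ is not simply controlled by $\|h\|_{W^{2,2}}^2$; the paper's Lemma \ref{l:revHolder}, eq.~\eqr{e:e628}, handles it with a drift Bochner argument, and this is precisely why $\|\phi_U\|_{W^{1,2}}^2$ rather than $\|\phi_U\|_{L^2}^2$ appears in the conclusion --- an extra step you should make explicit.
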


 At a formal level, the proposition   follows from Taylor expansion and   the properties of  the first two derivatives of $P$ in the previous section.  However, Taylor expansion naturally leads to Gaussian $L^1$ bounds on powers of $U$, which are not in general bounded by powers of   $\| U \|_{L^2}$.   We will give some preliminary  results that are used to bridge this gap.
 
 \vskip1mm
Let $J$ denote the $L^2$ orthogonal projection of $U$ to the Jacobi fields of Proposition \ref{l:jacobi}
and set $h = U - J$.  
  It will be convenient to define $\| \cdot \|_2$ on a normal vector field $U$ by 
 \begin{align}
 	\| U \|_2 =   \left\| |U|^2 + |\nabla U|^2 + |\nabla_{\RR^{n-k}} |\nabla U| |^2 + \frac{ \left| \Hess_U \right|^2}{1+|x|}    \right\|_{L^2} \, .
 \end{align}
 Note that this is essentially quadratic in $U$, but is not necessarily  bounded by $\| U \|_{W^{2,2}}^2$.

  \subsection{An intermediate gradient inequality}
 
 Define $F(U)$ by
 \begin{align}
 	F(U) = F(\Sigma_U) - F(\Sigma) \, .
 \end{align}
 Loosely speaking, $\phi_U$ is the gradient of $F$ and the next proposition is a gradient inequality.  
 
 \begin{Pro}	\label{p:gradLoj}
 There exist $C$ and $\epsilon_0 > 0$ so that if $U$ is a compactly supported normal vector field on $\Sigma$ with $\| U \|_{C^2} < \epsilon_0$, then
 \begin{align}
 \| h \|_{W^{2,2}}  &\leq C  \,\left(  \| U \|_{L^2}^2 + \|  \phi_{U} \|_{L^2} \right)     \, ,  \label{e:spect}  \\
 	|F(U)| &\leq C \, \| \phi_U \|_{L^2} \, \| U \|_{L^2} + C \, \| U \|_{L^2}^3 \, .
 \end{align}
 \end{Pro}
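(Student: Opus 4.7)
The strategy is to linearize $\phi$ around the cylinder $\Sigma$ and exploit the spectral properties of the Jacobi operator $L$ via the decomposition $U = J + h$. Proposition \ref{l:jacobi} tells us that $LJ = 0$, so $L$ acts trivially on the Jacobi part and is effectively invertible on $h$ (Corollary \ref{c:eff}). The entire argument hinges on controlling the nonlinear remainder obtained when one replaces $LU$ by the actual nonlinear quantity $\phi_U$.

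For the first inequality I would apply Proposition \ref{c:firstvarPHI} to the family $p \mapsto p + sU(p)$. Since $\phi$ vanishes identically on $\Sigma$, the formula there specializes at $s=0$ to $\frac{d}{ds}\phi_{sU}\big|_{s=0} = LU$, and Taylor expansion to $s=1$ yields
\begin{equation*}
\phi_U = LU + R(U),
\end{equation*}
where the remainder $R(U)$ is a smooth function of $(p, U, \nabla U, \Hess_U)$ vanishing to second order at $U=0$ and consequently satisfying a pointwise bound $|R(U)| \leq C(|U|^2 + |\nabla U|^2 + |U|\,|\Hess_U|)$ with $C$ depending on $\|U\|_{C^2}$. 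Since $LJ=0$, we have $Lh = LU = \phi_U - R(U)$, so Corollary \ref{c:eff} applied to $h$ gives
\begin{equation*}
\|h\|_{W^{2,2}} \leq C\|Lh\|_{L^2} \leq C\|\phi_U\|_{L^2} + C\|R(U)\|_{L^2}.
\end{equation*}
It remains to show $\|R(U)\|_{L^2} \leq C\|U\|_{L^2}^2$, for which I would split $U = J + h$ and use that on the finite-dimensional Jacobi space, Lemma \ref{l:Jbound} converts $\|J\|_{L^2}$ into pointwise bounds with polynomial weight, while for the $h$ contribution the smallness $\|U\|_{C^2}\leq \epsilon_0$ allows $\epsilon_0 \|h\|_{W^{2,2}}$ to be absorbed into the left side.

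For the second inequality I would use that $\phi_\Sigma = 0$ makes $\Sigma$ a critical point of $F$, so $\frac{d}{ds}F(\Sigma_{sU})\big|_{s=0} = 0$. Writing
\begin{equation*}
F(\Sigma_U) - F(\Sigma) = \int_0^1 \tfrac{d}{ds}F(\Sigma_{sU})\,ds
\end{equation*}
and using the first variation formula $\tfrac{d}{ds}F(\Sigma_{sU}) = -(4\pi)^{-n/2}\int_{\Sigma_{sU}}\langle U,\phi_{sU}\rangle\,\e^{-|x|^2/4}$ (the Jacobian factor is close to one since $\|U\|_{C^2}\leq\epsilon_0$), Cauchy--Schwarz gives $|F'(s)| \leq C\|U\|_{L^2}\|\phi_{sU}\|_{L^2}$. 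From $\phi_{sU} = sLU + O(s^2)$ in $L^2$ (Taylor expansion as in the first step) combined with $LU = \phi_U - R(U)$, one obtains $\|\phi_{sU}\|_{L^2} \leq s\|\phi_U\|_{L^2} + C\|U\|_{L^2}^2$, and integrating in $s\in[0,1]$ yields the claimed bound.

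The main obstacle is obtaining the quadratic estimate $\|R(U)\|_{L^2} \leq C\|U\|_{L^2}^2$ rather than the weaker $C\|U\|_{L^2}\|U\|_{W^{2,2}}$ that a naive Cauchy--Schwarz against the second-derivative terms in $R$ would produce. This requires exploiting both the smallness $\|U\|_{C^2}\leq \epsilon_0$ (to absorb $\|h\|_{W^{2,2}}$ on the left) and the rigidity of the Jacobi space: Jacobi fields are polynomial by Proposition \ref{l:jacobi}, their Gaussian $W^{2,2}$ and $L^2$ norms are equivalent by Lemma \ref{l:Jbound}, and $\|J\|_{L^2}$ is in turn controlled by $\|U\|_{L^2}$ since $J$ is the orthogonal projection of $U$.
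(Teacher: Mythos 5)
Your proposal is correct and follows essentially the same route as the paper: decompose $U=J+h$, Taylor-expand $\phi_U=LU+R(U)$, apply the spectral estimate (Corollary \ref{c:eff}) to $Lh=LU$, and close the loop by using the polynomial pointwise bounds of Lemma \ref{l:Jbound} on $J$ together with absorbing the $\epsilon_0\|h\|_{W^{2,2}}$ contribution from $h$; the second inequality is organized a little differently (bounding $\|\phi_{sU}\|_{L^2}$ along the path rather than expanding $F$ to quadratic order), but the ingredients are identical. The one imprecision worth flagging is your pointwise Taylor remainder, which omits the $(1+|x|)$ weight in front of $|U|^2+|\nabla U|^2$ and the term $|\nabla U||\Hess_U|$ appearing in Lemma \ref{l:Taylorforme}; these polynomial weights are what force the paper to package the remainder in the weighted quadratic functional $\|\cdot\|_2$ and to invoke the Gaussian Poincar\'e inequality (lemma 3.4 of \cite{CM3}) when bounding $\|J\|_2$ and $\|h\|_2$, but they do not change the strategy since polynomial growth is absorbed by the Gaussian weight.
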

 
 We will use the next two lemmas to prove the proposition. The first of these   bounds $L$ 
from $W^{2,2}$ to $L^2$ and shows that   $\| h \|_2$ is much smaller than $ \| h \|_{ W^{2,2} } $.
 
 \begin{Lem}	\label{l:poinc}
 There exists $C $ so that 
 \begin{align}
 	\| L \, U \|_{L^2} &\leq C \, \| U \|_{W^{2,2}} \, , \\
	\| J \|_2 &\leq C \, \| J \|_{L^2}^2 \leq C \, \| U \|_{L^2}^2 \, ,  \label{e:lpoincJ} \\
	\| h \|_2  &\leq C \, \| U \|_{C^2} \, \| h \|_{ W^{2,2} } \, . \label{e:lpoinc2}
 \end{align}
 \end{Lem}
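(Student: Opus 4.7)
The plan is to prove the three inequalities in sequence, using the explicit form of $L$ on the cylinder and the structure of the Jacobi fields from Section \ref{s:sectionJ}.

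For the first bound, formula \eqr{e:2p3} gives $L = \cL + \tfrac{1}{2} + \tfrac{1}{2}\Pi_{\bN}$ on $\SS^k_{\sqrt{2k}}\times\RR^{n-k}$, so pointwise
\begin{equation*}
|LU| \leq |\Delta^{\perp}U| + C\,|x^T|\,|\nabla U| + C\,|U|.
\end{equation*}
The Hessian and zeroth-order contributions are immediately bounded by $\|U\|_{W^{2,2}}$. For the middle term, on the cylinder $x^T$ is the Euclidean coordinate $y \in \RR^{n-k}$, so the identity $y_i\,e^{-f} = -2\,\partial_{y_i}e^{-f}$ and two integrations by parts yield the weighted Poincar\'e inequality
\begin{equation*}
\int |y|^2\,|\nabla U|^2\,e^{-f} \leq C\int\bigl(|\nabla U|^2+|\Hess_U|^2\bigr)\,e^{-f},
\end{equation*}
which is the standard estimate for the Ornstein--Uhlenbeck operator. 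Combining the three pieces gives $\|LU\|_{L^2}\leq C\|U\|_{W^{2,2}}$.

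For the second bound, Proposition \ref{l:jacobi} shows that the Jacobi space is finite-dimensional, so all norms on it are equivalent; in particular $\|J\|_{L^2(B_{\sqrt{2n}+1})}\leq C\|J\|_{L^2}$ with the Gaussian norm on the right. Substituting into Lemma \ref{l:Jbound} yields the pointwise estimates $|J|\leq C(1+|x|^2)\|J\|_{L^2}$, $|\nabla J|,|\Hess_J|\leq C(1+|x|)\|J\|_{L^2}$, and $|\Hess_J(\cdot,\RR^{n-k})|\leq C\|J\|_{L^2}$. Squaring each and integrating against the Gaussian weight, using finiteness of all polynomial moments of the Gaussian, bounds each of the four quadratic terms defining $\|J\|_2$ by a constant multiple of $\|J\|_{L^2}^2$; the factor $(1+|x|)^{-1}$ on the Hessian term is exactly what is needed to absorb the linear pointwise growth of $|\Hess_J|$. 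The second half $\|J\|_{L^2}^2\leq \|U\|_{L^2}^2$ is immediate because $J$ is the $L^2$-orthogonal projection of $U$.

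For the third bound, I split $h=U-J$ and estimate each quadratic quantity in $\|h\|_2$ via the triangle inequality. The pointwise bound $\|U\|_{L^\infty}+\|\nabla U\|_{L^\infty}+\|\Hess_U\|_{L^\infty}\leq\|U\|_{C^2}$ allows pulling an $L^\infty$ factor out of each weighted $L^2$ integral of a quadratic in $U$: for instance $\||U|^2\|_{L^2}\leq\|U\|_{C^2}\|U\|_{L^2}$, and similarly for the $|\nabla U|^2$ and $|\Hess_U|^2/(1+|x|)$ terms, giving $\|U\|_2\leq C\|U\|_{C^2}\|U\|_{W^{2,2}}$. For the Jacobi contribution, step (2) combined with the Cauchy--Schwarz bound $\|J\|_{L^2}\leq\|U\|_{L^2}\leq C\|U\|_{C^2}$ (which uses that $U$ has compact support and $\int e^{-f}<\infty$ on $\Sigma$) gives $\|J\|_2\leq C\|J\|_{L^2}^2\leq C\|U\|_{C^2}\|U\|_{W^{2,2}}$. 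Finally, the triangle inequality together with norm equivalence $\|J\|_{W^{2,2}}\leq C\|J\|_{L^2}$ on the finite-dimensional Jacobi space converts $\|U\|_{W^{2,2}}$ on the right to $\|h\|_{W^{2,2}}$ up to an error absorbed under the smallness assumption $\|U\|_{C^2}<\epsilon_0$.

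The main technical obstacle is the third estimate, because away from $\supp U$ the field $h=-J$ can have polynomial growth and no global $L^\infty$ bound on $h$ is available directly. The Gaussian weight exactly tames these polynomial contributions, and the $(1+|x|)^{-1}$ factor in the Hessian term of $\|\cdot\|_2$ is built in precisely so that the linear pointwise growth of $|\Hess_J|$ does not create an unbounded integral. The finite-dimensionality of the Jacobi space (allowing norm equivalence) and the compact support of $U$ (allowing $\|U\|_{L^2}\leq C\|U\|_{C^2}$) are the two structural facts that make all the Gaussian moments come out finite and uniformly bounded.
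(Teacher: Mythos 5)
Your treatment of the first two inequalities is sound and close in spirit to the paper's: for the first you re-derive the weighted Poincar\'e inequality $\int |x^T|^2|\nabla U|^2 e^{-f}\leq C\int(|\nabla U|^2+|\Hess_U|^2)e^{-f}$ from the Gaussian integration-by-parts identity rather than citing lemma $3.4$ of \cite{CM3}, which is fine, and for the second you correctly combine the pointwise Jacobi-field bounds of Lemma \ref{l:Jbound} with the finiteness of Gaussian polynomial moments.

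The third inequality \eqr{e:lpoinc2} has a genuine gap. You split $h=U-J$ first and estimate the quadratic quantities by $|h|^2\leq 2|U|^2+2|J|^2$, etc. This sacrifices the single explicit factor of $h$ and yields only $\|h\|_2\leq C\|U\|_{C^2}\|U\|_{W^{2,2}}$. You then try to trade $\|U\|_{W^{2,2}}$ for $\|h\|_{W^{2,2}}$ via $\|U\|_{W^{2,2}}\leq \|h\|_{W^{2,2}}+\|J\|_{W^{2,2}}\leq \|h\|_{W^{2,2}}+C\|J\|_{L^2}$, producing the extra term $C\|U\|_{C^2}\|J\|_{L^2}\leq C\|U\|_{C^2}\|U\|_{L^2}$. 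This term is not absorbable: it is not comparable to the left side $\|h\|_2$, nor to $\|U\|_{C^2}\|h\|_{W^{2,2}}$ (take $h\to 0$ with $J$ fixed, so the right side of \eqr{e:lpoinc2} vanishes while $\|U\|_{C^2}\|U\|_{L^2}$ does not), and smallness of $\|U\|_{C^2}$ only scales the error rather than removing it. Worse, in the downstream application (equations \eqr{e:U2bd}--\eqr{e:taylor4}) the whole point of \eqr{e:lpoinc2} is to guarantee that the non-$\phi_U$ part of $\|h\|_{W^{2,2}}$ is \emph{quadratic} in $\|U\|_{L^2}$; a stray term linear in $\|U\|_{L^2}$ would invalidate \eqr{e:spect} and hence the gradient inequality of Proposition \ref{p:gradLoj}.

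The correct route, which is the one the paper takes, is to keep one factor of $h$ intact before applying any triangle inequality. Write $|h|^2\leq |h|\,(|U|+|J|)\leq |U|_{C^0}\,|h|+C\,|h|\,\|U\|_{L^2}(1+|x|^2)$, take the Gaussian $L^2$ norm, and then apply the weighted Cauchy--Schwarz (lemma $3.4$ of \cite{CM3}, or your integration-by-parts estimate) to the polynomially weighted $h$ term to get $\||h|^2\|_{L^2}\leq C\,\|U\|_{C^2}\,\|h\|_{W^{2,2}}$ directly, with $h$ appearing on both sides. The analogous manipulation handles $|\nabla h|^2$ and (using the $(1+|x|)^{-1}$ weight to avoid extra derivatives) $\tfrac{|\Hess_h|^2}{1+|x|}$; for $|\nabla_{\RR^{n-k}}|\nabla h||^2$ one first uses the Kato inequality and then the last line of Lemma \ref{l:Jbound}. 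Retaining the explicit $h$ factor is exactly what prevents the loss of information that your decomposition incurs.
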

 
 \begin{proof}
 The operator $L$ on $\Sigma$ becomes
$
	L = \cL + \frac{1}{2} + \frac{1}{2} \, \Pi_{\bN}$, where $\Pi_{\bN}$ is orthogonal projection onto the principal normal $\bN$.  Therefore, we have
\begin{align}
	\| L \, U \|_{L^2} &\leq  \| \Delta U \|_{L^2} + \frac{1}{2} \| \nabla_{x^T} U \|_{L^2} + \| U \|_{L^2} \leq  n \,  \| \Hess_U \|_{L^2} + \frac{1}{2} \| |x^T| \, |\nabla  U| \|_{L^2} + \| U \|_{L^2} \, . \notag
\end{align}
The first claim follows from this since lemma $3.4$ in \cite{CM3} gives $\| |x| \, |\nabla  U| \|_{L^2} \leq C \| |\nabla U| \|_{W^{1,2}}$.

The key for the other claims is  that Lemma \ref{l:Jbound} gives
\begin{align}	\label{e:myJbound}
	|J|   \leq C \, (1+ |x|^2) \, \| J \|_{L^2 } &\leq C \, (1+ |x|^2) \, \| J \|_{L^2} \, , \\
	  |\nabla  J| + |\Hess_J| &\leq C \, (1+ |x|) \, \| J \|_{L^2}   \,  , \\
	    \left| \Hess_J ( \cdot , \RR^{n-k}) \right| &\leq C \,  \| J \|_{L^2 }  \, . \label{e:onemorec1}
\end{align}
The first inequality in \eqr{e:lpoincJ}  follows  from this since the polynomial factors are bounded uniformly in $L^2$; the second inequality follows since $\| U \|_{L^2}^2 = \| J \|_{L^2}^2 + \|h \|_{L^2}^2$.

To prove \eqr{e:lpoinc2}, we will show that the $L^2$ norms of  $|h|^2 , |\nabla h|^2$, $ |\nabla_{\RR^{n-k}} |\nabla h| |^2$ and $\frac{ \left| \Hess_h \right|^2}{1+|x|} $ are each bounded by
$C \, \| U \|_{C^2} \, \| h \|_{ W^{2,2} }$.  To handle the first, we use that
\begin{align}
	|h|^2 \leq |h| \, \left( |U|  + |J|  \right) \leq  |U|_{C^0} \, |h|   + C \, |h| \, \| U\|_{L^2} (1 + |x|^2) \, , 
\end{align}
 where the last inequality used \eqr{e:myJbound}.  Taking the $L^2$ norm and using lemma $3.4$ from \cite{CM3} on the last term (twice) gives
 \begin{align}
 	\| |h|^2 \|_{L^2} &\leq |U|_{C^0} \, \|h\|_{L^2} + C \,  \| U\|_{L^2} \, \| h \,  (1 + |x|^2)\|_{L^2}   \leq  |U|_{C^0} \, \|h\|_{L^2} + C \,  \| U\|_{L^2} \, \| h \|_{W^{2,2}} \, . \notag
 \end{align}
 Arguing similarly gives the corresponding bounds for $\| |\nabla h|^2 \|_{L^2}$ and $\left\| \frac{ \left| \Hess_h \right|^2}{1+|x|} \right\|_{L^2}$; in the second case, we  avoid taking additional derivatives because of the $(1+|x|)$ in the denominator.  Bounding the last term is similar, but also uses the Kato inequality
 \begin{align}
 	\left| \nabla_{\RR^{n-k}} |\nabla h | \right| \leq \left| \nabla_{\RR^{n-k}} \nabla h \right| \leq \left| \nabla_{\RR^{n-k}} \nabla U \right| + \left| \nabla_{\RR^{n-k}} \nabla J \right| \, , 
 \end{align}
 and then uses \eqr{e:onemorec1} to bound the last term.
  \end{proof}
  
We show next   that $\phi_U = L \, h$ up to higher order terms ($ \|U \|_{2}$ is    quadratic in $U$).
 
 \begin{Lem}	\label{l:Taylorforme}
 There exists $C$ so that
 	 $ \|  \phi_U - L \, h \|_{L^2}   \leq  C \,  \|U \|_{2} \, .$  
 \end{Lem}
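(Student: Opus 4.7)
The plan is Taylor expansion. Since $\Sigma = \SS^k_{\sqrt{2k}} \times \RR^{n-k}$ is a shrinker, $\phi_\Sigma \equiv 0$, and Proposition \ref{c:firstvarPHI} then shows that the first variation of $\phi$ along a normal field $V$ at $\Sigma$ is precisely $LV$: the tangential correction $-F_j g^{ij}\langle\nabla^\perp_{F_i}V,\phi\rangle$ vanishes because $\phi=0$ on $\Sigma$. Applying this to the one-parameter family of graphs $\Sigma_{sU}$ for $s\in[0,1]$ and using Taylor's integral remainder formula gives
\begin{equation*}
    \phi_U \;=\; L\,U \;+\; \int_0^1 (1-s)\,\frac{d^2}{ds^2}\phi_{sU}\,ds.
\end{equation*}
By Proposition \ref{l:jacobi}, $J$ lies in the kernel of $L$, so $LU = Lh$, and the lemma reduces to bounding the integral remainder $R(U)$ in $L^2$ by $\|U\|_2$.

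To compute $\frac{d^2}{ds^2}\phi_{sU}$ I would differentiate the first-variation formula for $\phi$ once more, using Lemma \ref{l:diffPiN}, Corollary \ref{c:XprimeN}, Lemma \ref{c:atzero}, and Lemma \ref{l:Aij} for the $s$-derivatives of $\Pi$, $\Pi(F)$, $\bH$, and $A_{ij}$. The resulting expression is a finite sum of products of the form $c_\sigma(p,s)\cdot\alpha\cdot\beta$ where $\alpha,\beta \in \{U,\nabla^\perp U,\Hess_U\}$ and each coefficient $c_\sigma$ is a smooth function of the geometry of $\Sigma_{sU}$—uniformly bounded in $s$ by the $C^2$-smallness of $U$—together with at most one linear factor of $|x|$ inherited from the $\tfrac12 x^\perp$ part of $\phi=\tfrac12 x^\perp-\bH$. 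A structural observation visible in Lemma \ref{c:atzero} and Lemma \ref{l:Aij} is that the Hessian $\Hess_U$ enters the first variations of $\bH$ and of $A$ only linearly, so any product in $\frac{d^2}{ds^2}\phi_{sU}$ containing $\Hess_U$ pairs it with either $U$ or $\nabla U$, never with a second Hessian, and without an $|x|$-factor.

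The third step is to bound $|R(U)|$ pointwise by (a constant times) the integrand defining $\|U\|_2$. Products without an $|x|$-factor (namely $|U|^2$, $|\nabla U|^2$, $|U||\nabla U|$) are absorbed directly. For mixed products $|\alpha||\Hess_U|$ with $\alpha\in\{U,\nabla U\}$, AM--GM with the weight $1+|x|$ gives
\begin{equation*}
    |\alpha|\,|\Hess_U| \;\le\; \tfrac12(1+|x|)\,|\alpha|^2 \;+\; \tfrac12\,\frac{|\Hess_U|^2}{1+|x|},
\end{equation*}
and the polynomial factor $(1+|x|)$ on $|\alpha|^2$ is absorbed in the Gaussian $L^2$ norm via lemma 3.4 of \cite{CM3}. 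A second structural observation is that, on the cylinder, $x^T$ points only along the $\RR^{n-k}$ factor, so any $\nabla_{x^T}$-derivative of $|\nabla U|$ produced by the second variation is a derivative along $\RR^{n-k}$; this is precisely what accounts for the ingredient $|\nabla_{\RR^{n-k}}|\nabla U||^2$ in $\|U\|_2$ (in place of the full gradient).

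The main obstacle is the bookkeeping: one must verify, term by term from the explicit variation formulas of Section \ref{s:s5}, that the above structural dichotomy holds—Hessians always linear, $|x|$-factors always single—and that each resulting product is of one of the forms handled above. This follows from the schematic form of Lemmas \ref{l:diffPiN}, \ref{c:atzero}, and \ref{l:Aij}: $\Pi_s$ and $(\Pi F)_s$ carry no Hessians, and differentiating once more introduces at most one Hessian through $\bH_{ss}$ (or $A_{ss}$), while the sole source of an $|x|$-factor in the second variation remains the undifferentiated $\tfrac12 x^\perp$ part of $\phi$.
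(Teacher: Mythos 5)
Your plan is the same Taylor-expansion approach as the paper, and the key ingredients you identify are correct: $\phi_\Sigma = 0$ and Proposition \ref{c:firstvarPHI} give $\phi'(0) = LU$, $L\,J=0$ gives $LU = Lh$, and the remaining work is a pointwise bound on the second-order remainder followed by a weighted $L^2$ estimate. Your structural observations about the second variation (the Hessian entering only linearly, and the $\Hess_U$-terms not carrying an $|x|$-factor) are exactly what the paper's pointwise bound $|\phi_U - LU| \le C(1+|x|)(|U|^2 + |\nabla U|^2) + C(|U|+|\nabla U|)|\Hess_U|$ encodes, and that dichotomy is essential: if $\Hess_U$ came paired with an $|x|$-weight, AM--GM would produce $(1+|x|)|\Hess_U|^2$ rather than the $\frac{|\Hess_U|^2}{1+|x|}$ that appears in $\|U\|_2$.

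The main difference from the paper is that you propose to re-derive this pointwise Taylor remainder from the variational formulas of Section \ref{s:s5}, whereas the paper simply cites lemma $4.10$ of \cite{CM3} as a ``general calculus fact'' (with a footnote explaining that Proposition \ref{c:firstvarPHI} replaces the hypersurface linearization). Your route is viable in principle, but it is genuinely a sketch: the formulas in Section \ref{s:s5} are computed at $s=0$ under the assumption $V^T=0$, which fails for $s>0$, so differentiating them once more requires either a version valid along the family or careful tracking of the tangential corrections. You acknowledge the bookkeeping obstacle yourself; this is precisely what lemma $4.10$ of \cite{CM3} packages.

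One concrete misattribution: you trace the ingredient $|\nabla_{\RR^{n-k}}|\nabla U||^2$ in $\|U\|_2$ to $\nabla_{x^T}$-derivatives of $|\nabla U|$ in the second variation, on the grounds that $x^T$ lies along the axis. But the second variation produces products of $U$, $\nabla U$, $\Hess_U$ and does not contain $\nabla|\nabla U|$ as such. The term $|\nabla_{\RR^{n-k}}|\nabla U||^2$ instead comes from the \emph{second} step --- applying the weighted estimate (lemma $3.4$ of \cite{CM3}) to absorb the $(1+|x|)$-factor in front of $|\nabla U|^2$; that inequality controls $\||x|\,w\|_{L^2}$ by the norm of $w$ and its axial derivative, and with $w = |\nabla U|^2$ the axial derivative produces exactly $|\nabla U|\,|\nabla_{\RR^{n-k}}|\nabla U||$.
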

 
 \begin{proof}
 Taylor expansion as in lemma  $4.10$ of \cite{CM3} (see $(4.19)$ in particular) gives that{\footnote{Section $4$ in \cite{CM3} considers variations by functions rather than normal vector fields, but lemma $4.10$ is a general calculus fact that goes through for vector fields.  The calculation  for the linearization of $\phi$  in \cite{CM3} is replaced here by Proposition \ref{c:firstvarPHI}.}}
 \begin{align}	\label{:taylor1}
 	|\phi_U - L \, U | &\leq C \, (1+|x|) (|U|^2 + |\nabla U|^2) + C \, (|U|  + |\nabla U| )\left| \Hess_U \right| \notag  \\
		&\leq C \, (1+|x|) (|U|^2 + |\nabla U|^2) + \frac{C}{(1+|x|)} \,  \left| \Hess_U \right|^2 \, .
 \end{align}
Applying lemma $3.4$ from \cite{CM3} gives
  \begin{align}	\label{:taylor2a}
 	\|\phi_U - L \, U \|_{L^2} &\leq   C \, \left\| |U|^2 + |\nabla U|^2 + |\nabla_{\RR^{n-k}} |\nabla U| |^2 + \frac{ \left| \Hess_U \right|^2}{1+|x|}    \right\|_{L^2} \equiv  C \, \| U \|_2 \, .
 \end{align}
 \end{proof}

 \begin{proof}[Proof of Proposition \ref{p:gradLoj}]  
The squared triangle inequality and \eqr{e:lpoincJ}  in Lemma \ref{l:poinc} give
 \begin{align}	\label{e:combinthis}
 	\| U \|_2 \leq 2\, \| J \|_2 + 2 \, \| h \|_2 \leq C \, \| U \|_{L^2}^2 + 2 \, \| h \|_2 \, .
 \end{align}
 Combining \eqr{e:combinthis} with the third claim in Lemma \ref{l:poinc}, and noting that $\| U \|_{C^2} \leq \epsilon_0$, gives 
  \begin{align}	\label{e:U2bd}
 	\| U \|_2 \leq  C \, \| U \|_{L^2}^2 + C \,\epsilon_0 \, \| h \|_{ W^{2,2} } \, .
 \end{align}
Combine Corollary \ref{c:eff} with Lemma \ref{l:Taylorforme} and \eqr{e:U2bd}  to get
 \begin{align}	\label{e:U2bdd}
 		 \| h \|_{W^{2,2}} &\leq C_1 \,  \| L h \|_{L^2}   \leq C_1  \| \phi_U \|_{L^2}  + C \, C_1  \|U \|_{2}  \notag \\
		&\leq C  \| \phi_U \|_{L^2}  + C\, \| U \|_{L^2}^2 +   C \, \epsilon_0 \| h \|_{W^{2,2}}    \, .  
 \end{align}
 Taking $\epsilon_0$ small (depending on $C$ which depends on $n$), we can absorb the last term to get
  \begin{align}
 		 \| h \|_{W^{2,2}} &\leq   C  \| \phi_U \|_{L^2}  + C\, \| U \|_{L^2}^2     \, . \label{e:taylor4}
 \end{align}
Using this in \eqr{e:U2bd} and \eqr{e:U2bdd} gives
   \begin{align}	\label{e:U2bdA}
 	\| U \|_2 +
	 \| L h \|_{L^2}   \leq C \, \| U \|_{L^2}^2 + C  \| \phi_U \|_{L^2}  \,  . 
 \end{align}

 Now consider  the one-parameter family of graphs of $s\, U$ for $s \in [0,1]$.  By the first variation,  
\begin{align}
	\frac{d}{ds} \, F(s\, U) = - \int_{\Sigma_{s\, U}} \langle  \phi_{s\,U} , U \rangle \, \e^{-f} \, .
\end{align}
The fact that $U$ is normal implies that  $\e^{-f}$ on $\Sigma_{s\,U}$ is at most $\e^{\sup |U|^2} \, \e^{-f}$ on $\Sigma$.  
Moreover,  it follows that \eqr{:taylor2a} holds for $s\, U$.  
Since the area elements are uniformly equivalent up to $C(|U| +|\nabla U|)$,   the fundamental theorem of calculus (cf. ($4.25$) in \cite{CM3}) gives
 \begin{align}
 	\left| F(U) + \frac{1}{2} \langle U , L \, U \rangle_{L^2} \right|  &\leq C \, \| U \|_{L^2} \, \|  U \|_2  + C \, \| |L \, U| \, |U| \, (|U| +|\nabla U|) \|_{L^2} \notag \\
	&\leq C \, \| U \|_{L^2} \, \|  U \|_2  + C \, \| L \, U \|_{L^2} \, \| U \|_2 \, .    \label{e:taylor0}
 \end{align}
Consequently, 
  \eqr{e:taylor0} and the bound  \eqr{e:U2bdA} for $\| L \, h \|_{L^2} + \| U \|_2$  give 
 \begin{align}
 	|F(U)|    
	&\leq C \, \| U \|_{L^2} \,\left(  \| L h\|_{L^2} +  \|U \|_{2} \right) \leq C \, \| U \|_{L^2}    \| \phi_U \|_{L^2}    +  C \, \| U \|_{L^2}^{3}   \, .
\end{align}
 \end{proof}
 
    \subsection{Distance to a Jacobi field}
       
      In this subsection, it will be useful to have the following notation for the pointwise $C^2$ norm of a normal vector field $V$:
 \begin{align}
 	|V|_2 \equiv |V| + |\nabla V| +|\nabla^2 V| \, .
 \end{align}
       The next lemma bounds the distance from $U$ to the Jacobi field $J$.  Recall that $h=U-J$.

       \begin{Lem}	\label{l:revHolder}
       There exists $C$ and ${\epsilon}_0>0$,  depending on $N$,  so that if $\| U \|_{C^2} \leq \epsilon_0$, then 
       \begin{align}
       			\int (1+|x|^2)|h|_2^2 \, \e^{-f} &\leq    C \,  \| U \|_{L^2}^4 + C \, \| \phi_U \|_{W^{1,2}}^2   \, .  \label{e:e628}
	\end{align}
	Moreover, given $\kappa \in (0,1]$, there exists $C_{\kappa} = C_{\kappa} (\kappa , N)$ so that
	\begin{align}
		\| (1+|x|^6) |U|_2^3 \|_{L^1} &\leq C_{\kappa} \, \left\{ \| U \|_{L^2}^3 +   \| \phi_{U} \|_{L^2}^{ \frac{6}{3+\kappa} } \right\}  \, .
	\end{align}
       \end{Lem}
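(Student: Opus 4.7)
The first inequality boosts Proposition \ref{p:gradLoj} using weighted elliptic regularity for $L$. Since $J$ lies in the kernel of $L$, we have $L h = L U$, and Lemma \ref{l:Taylorforme} gives $\|L h - \phi_U\|_{L^2} \leq C \|U\|_2$. The plan is to trade the factor $(1+|x|^2)$ for a derivative via lemma $3.4$ of \cite{CM3} (i.e.\ $\||x|\,w\|_{L^2} \leq C \|w\|_{W^{1,2}}$), obtaining
\begin{align}
\int (1+|x|^2)\, |h|_2^2 \, \e^{-f} \leq C \, \bigl\| \, |h|_2 \, \bigr\|_{W^{1,2}}^2 \, . \notag
\end{align}
Since $h$ is $L^2$-orthogonal to the Jacobi fields, differentiating the equation and invoking the spectral gap of Corollary \ref{c:eff}, promoted from $W^{2,2}$ to $W^{3,2}$ by standard elliptic bootstrapping, gives $\bigl\| |h|_2 \bigr\|_{W^{1,2}} \leq C \|L h\|_{W^{1,2}}$. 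Splitting $\|L h\|_{W^{1,2}} \leq \|\phi_U\|_{W^{1,2}} + \|L U - \phi_U\|_{W^{1,2}}$ and differentiating the pointwise Taylor expansion from the proof of Lemma \ref{l:Taylorforme} bounds the remainder by $C \|U\|_{L^2}^2 + C \epsilon_0 \|h\|_{W^{3,2}}$; absorbing the $\epsilon_0$-term for $\epsilon_0$ small gives \eqr{e:e628}.

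For the cubic bound, I would split $U = J + h$. By Lemma \ref{l:Jbound}, $|J|_2 \leq C(1+|x|^2)\|J\|_{L^2}$, so
\begin{align}
\int (1+|x|^6)\, |J|_2^3 \, \e^{-f} \leq C \|J\|_{L^2}^3 \int (1+|x|^{12}) \, \e^{-f} \leq C \|U\|_{L^2}^3 \, . \notag
\end{align}
For the $h$ piece, use the pointwise bound $|h|_2 \leq C \epsilon_0$ (from $\|U\|_{C^2} \leq \epsilon_0$ together with the $L^2\to C^0$ control of Jacobi fields on bounded sets supplied by Lemma \ref{l:Jbound}) to write $|h|_2^3 \leq (C\epsilon_0)^{1-\kappa}\, |h|_2^{2+\kappa}$. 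Iterating the weighted-elliptic scheme from the first part to gain the heavier $(1+|x|^6)$ weight at the cost of additional Sobolev derivatives of $\phi_U$, and then applying Gagliardo--Nirenberg interpolation
\begin{align}
\|\phi_U\|_{W^{k,2}} \leq C\, \|\phi_U\|_{L^2}^{\theta}\, \|\phi_U\|_{W^{m,2}}^{1-\theta} \, , \notag
\end{align}
combined with a uniform high-regularity bound on $\phi_U$ coming from the $C^2$-smallness (and a priori smoothness) of $U$, extracts the exponent $6/(3+\kappa)$ of $\|\phi_U\|_{L^2}$ that appears on the right.

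The main obstacle is the consistent bookkeeping across the iterated weighted-elliptic estimates. Each application of Hardy pushes the Taylor remainder one derivative higher, and to close the argument the remainder must be split into a piece quadratic in $\|U\|_{L^2}$ and a piece of the form $\epsilon_0 \|h\|_{W^{k,2}}$ small enough to be absorbed. Matching the final exponent to $6/(3+\kappa)$ pins down the interpolation parameter precisely, so there is little flexibility in how the cubic bound is assembled, and every error term must be tracked carefully; this is also the reason the constant $C_\kappa$ must blow up as $\kappa \to 0$.
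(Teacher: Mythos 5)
The first inequality: your route (bootstrap the spectral gap to $W^{3,2}$, then apply the Hardy/weight-trading lemma to $|h|_2$ itself) is a defensible alternative to what the paper does, which uses the drift Bochner formula on the cylinder to obtain directly $\int (1+|x|^2)|V|_2^2\,\e^{-f} \leq C\|V\|_{W^{1,2}}^2 + C\int(1+|x|^2)|LV|^2\e^{-f}$ and then inserts the pointwise Taylor remainder from \eqr{:taylor1}. The paper's Bochner route avoids ever touching third derivatives, so it is cleaner, but your bootstrap version could be made to work if written carefully.

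The second inequality is where the proposal breaks. Two concrete problems. First, the pointwise estimate $|h|_2 \leq C\epsilon_0$ is false: $h = U - J$ and $|J|_2$ grows like $(1+|x|^2)\|U\|_{L^2}$ by Lemma \ref{l:Jbound}, so $|h|_2 \geq |J|_2 - |U|_2$ is unbounded even though $\|U\|_{C^2}\leq\epsilon_0$. The ``$L^2\to C^0$ control of Jacobi fields on bounded sets'' does give $|J|_2\leq C\epsilon_0$ on, say, $B_{2n}$, but the integral runs over the entire cylinder, and that is exactly the regime where $|J|_2$ is large. The pointwise fact that actually holds and that the paper uses is $|U|_2\leq\epsilon_0$ (since $|U|_2$ \emph{is} the pointwise $C^2$ quantity), combined with Young's inequality applied to $|U|_2^{1+\kappa}\,|J|_2^2$; this pushes $|J|_2$ to the power $3+\kappa$, where it is tamed by the Gaussian weight, and leaves $|U|_2^{1+\kappa}|h|_2^2\leq\epsilon_0^{1+\kappa}|h|_2^2$, which is handled by \eqr{e:spect}.

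Second, the proposed route to the exponent $6/(3+\kappa)$ — iterating the weighted elliptic estimate to absorb the $(1+|x|^6)$, then Gagliardo--Nirenberg interpolating $\|\phi_U\|_{W^{m,2}}$ down to $\|\phi_U\|_{L^2}$ against a ``uniform high-regularity bound on $\phi_U$'' — has no basis under the stated hypotheses. The lemma only assumes $\|U\|_{C^2}\leq\epsilon_0$; this does not control $\nabla^{\ell}U$ for $\ell\geq 3$ and hence gives no bound on $\|\phi_U\|_{W^{M,2}}$. The paper sidesteps the issue entirely: it first proves an unweighted $L^{3+\kappa}$ estimate $\int|U|_2^{3+\kappa}\e^{-f}\leq C\|\phi_U\|_{L^2}^2 + C_\kappa\|U\|_{L^2}^{3+\kappa}$, and then applies H\"older, pairing $|U|_2^3$ in $L^{(3+\kappa)/3}$ against the weight $(1+|x|^6)$ in $L^{(3+\kappa)/\kappa}$; the weight factor is finite against $\e^{-f}$ for any $\kappa>0$, which is where the blowup of $C_\kappa$ as $\kappa\to0$ comes from, and the exponent $6/(3+\kappa)$ appears simply as $2\cdot \tfrac{3}{3+\kappa}$ from raising $\|\phi_U\|_{L^2}^2$ to the power $3/(3+\kappa)$. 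No high-order control of $\phi_U$, and no iterated weighted estimates, are needed.
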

       
      \begin{proof}    
    We will need the following simple observation: If $u$ is a function on the cylinder with $\int |u|_2^2(1+|x|^2) \e^{-f} < \infty$, then taking the divergence of $(1+|x|^2) \nabla |\nabla u|^2\, \e^{-f}$ gives
\begin{align}
	\left| \int (1+|x|^2) \, \cL \, |\nabla u|^2 \, \e^{-f} \right| =  2\, \left|  \int \langle x^T , \nabla |\nabla u|^2 \rangle \, \e^{-f} \right| \leq 4 \, \int |x| \, |\nabla u| |\Hess_u| \, \e^{-f}  \, .
\end{align}
The drift Bochner formula{\footnote{The general Bochner formula for $\cL$ is $\frac{1}{2} \, \cL |\nabla u|^2 = |\Hess_u|^2 + \langle \nabla \cL u , \nabla u\rangle
+ \langle (\Ric+ \Hess_f )(\nabla u) , \nabla u \rangle$. On the cylinder, $\Ric + \Hess_f$ is $\frac{1}{2}$ times the identity.}} and integration by parts
leads to the bound
\begin{align}
	\int (1+|x|^2) & \left( |\nabla u|^2 +  2\, \left| \Hess_u \right|^2 \right) \, \e^{-f} \leq  4  \, \int |x| \, |\nabla u| |\Hess_u| \, \e^{-f}  -2 \, 
	\int (1+|x|^2) \, \langle \nabla \cL u , \nabla u \rangle \, \e^{-f}     \notag \\
	& \leq 4\, \int |x| \, |\nabla u| \, |\Hess_u|   \, \e^{-f} +  2\, \int (1+|x|^2)     (\cL u)^2   \e^{-f} +  4\int |x| \, |\nabla u| \,   |\cL u| \, \e^{-f}\, .  \notag
\end{align}
Using  absorbing inequalities on the first and third terms, we conclude that
\begin{align}
	\int (1+|x|^2) \left( |\nabla u|^2 + \left| \Hess_u \right|^2 \right) \, \e^{-f} &\leq  4  \int (1+|x|^2)    (\cL u)^2   \e^{-f} +  6\, \int   |\nabla u|^2  \e^{-f}  \, .
\end{align}
The normal bundle to the cylinder is trivial, so we get similarly for   a normal vector field $V$
\begin{align}
	\int (1+|x|^2) \left( |\nabla V |^2 + \left| \Hess_V \right|^2 \right) \, \e^{-f} &\leq   4 \, \int  (1+|x|^2)   |\cL \, V|^2  \e^{-f} + 6 \, \| V \|_{W^{1,2}}^2  \, ,
\end{align}
Since $\cL \, V$ differs from $L \, V$ by curvature terms applied to $V$ and $A$ is bounded, we  get
\begin{align}	\label{e:wtdW22}
	\int (1+|x|^2)|V|_2^2 \, \e^{-f} &\leq   C \, \| V \|_{W^{1,2}}^2 + C \, \int  (1+|x|^2)   |L \, V|^2  \e^{-f}  \, .
\end{align}
Equation \eqr{:taylor1} in the proof of  Lemma \ref{l:Taylorforme}
gives that
\begin{align}
	 \int  (1+|x|^2)   |L U|^2  \e^{-f} \leq  2\int  (1+|x|^2)   |\phi_U|^2  \e^{-f} + C \,  \int   \left\{  (1+|x|^4) |U|_1^4  + |\Hess_U|^4     \right\}  \e^{-f}  \notag \, .
\end{align}
Combining this with \eqr{e:wtdW22} with $V=h$ and using lemma $3.4$ from \cite{CM3} again gives
\begin{align}	\label{e:wtdW22a}
	\int (1+|x|^2)|h|_2^2 \, \e^{-f} &\leq   C \, \| h \|_{W^{2,2}}^2 + C \, \| \phi_U \|_{W^{1,2}}^2  + C \,  \int    (1+|x|^2) |U|_2^4   \,  \e^{-f}  \notag \\
	&\leq C \,  \| U \|_{L^2}^4 + C \, \| \phi_U \|_{W^{1,2}}^2  + C \,  \int  (1+|x|^2) |U|_2^4  \,  \e^{-f}
	 \, ,
\end{align}
where the last inequality used \eqr{e:spect}.  To bound the last term, we use first  $|U|_2 \leq |J|_2 + |h|_2$ and then  the absorbing inequality to get
\begin{align}	 \label{e:absorblast}
	   (1+|x|^2) |U|_2^4   &\leq 2 (1+|x|^2) |U|_2^2 \, |h|_2^2 + 2 (1+|x|^2) |U|_2^2 \, |J|_2^2 \notag \\
	   & \leq 2 (1+|x|^2) |U|_2^2 \, |h|_2^2 
	   + 2 \, (1+|x|^2) |J|_2^4 + \frac{1}{2} \, (1+|x|^2) |U|_2^4
	 \, .
\end{align}
 Lemma \ref{l:Jbound}
gives $C$ so that
\begin{align}	\label{l:JboundA}
	|J|_2 \leq C \, (1+ |x|^2) \, \| J \|_{L^2 (B_{2n})}  \leq C \, (1+ |x|^2) \, \| U \|_{L^2} \,  .
\end{align}
Absorbing the last term in \eqr{e:absorblast}, integrating and using \eqr{l:JboundA} gives
\begin{align}
	 \int  (1+|x|^2) |U|_2^4  \,  \e^{-f} \leq 4 \, \epsilon_0^2 \, \int(1+|x|^2) |h|_2^2 \, \e^{-f} + C \, \| U \|_{L^2}^4 \, .
\end{align}
Using this and the $C^2$ bound on $U$ in \eqr{e:wtdW22a} and choosing $\epsilon_0 > 0$ small enough to absorb the first term on the right above gives \eqr{e:e628}.

Given $\kappa \in ( 0,1]$, we argue as in \eqr{e:absorblast}, but replace the absorbing inequality  with Young's inequality  $ab \leq \delta^p \, \frac{a^p}{p} + \delta^{-q} \, \frac{b^q}{q}
$ (with $a = |U|_2^{1+\kappa}$, $b= |J|_2^2$, $p= \frac{3+\kappa}{1+\kappa}$ and $q=\frac{3+\kappa}{2}$)  to get
\begin{align}	
	    |U|_2^{3+\kappa}   &  \leq 4 \, |U|_2^{1+\kappa} \, |h|_2^2 
	   + c_{\kappa} \,  |J|_2^{3+\kappa}
	 \, .
\end{align}
Integrating this and using \eqr{l:JboundA} to bound the $J$ term and \eqr{e:spect} on the $h$ term gives
\begin{align}	\label{e:6p41}
	\int  |U|_2^{3+\kappa}  \, \e^{-f} \leq  C\,  \| \phi_U \|_{L^2}^2  +   C\, \|U \|_{L^2}^4 + C \, c_{\kappa} \, \| U \|_{L^2}^{3+ \kappa}
	\leq  C\,  \| \phi_U \|_{L^2}^2    + C_{\kappa} \, \| U \|_{L^2}^{3+ \kappa} \, .
\end{align}
The last claim follows from \eqr{e:6p41} and the H\"older inequality
\begin{align}
	\int  |U|_2^{3}(1+|x|^6) \, \e^{-f} &\leq \left( \int |U|_2^{3+\kappa} \, \e^{-f} \right)^{ \frac{3}{3+\kappa}} \, \left( \int (1+|x|^6)^{ \frac{3+\kappa}{\kappa} } \, \e^{-f}
	\right)^{ \frac{\kappa}{3+\kappa}}  	\, .
\end{align}
      \end{proof}

 \begin{proof}[Proof of Proposition \ref{l:PLone}]
 Define $P(s) = P_{sU}$ for  $s \in [0,1]$. By
 Lemmas \ref{l:geeral} and \ref{l:Pprime},
 $P(s) = \cP (x,sU, s\nabla U , s\nabla^2 U)$ where  
 $	\| \cP \|_{C^3} \leq C \, (1+ |x|^2)$,    $\cP(0)=0$, and  $\nabla \cP=0$ at $0$.
 Taylor expansion gives
 \begin{align}	\label{e:e7p4}
 	|P_{U}| & \leq \left| P(0) + P'(0) + \frac{1}{2} \, P''(0) \right| + C \, (1+|x|^6)\, |U|_2^3  =   \frac{1}{2} \,  \left|  P''(0) \right| + 
	C \, (1+|x|^6)\,  |U|_2^3 \, .
 \end{align}
 The Hessian of $\cP$  vanishes in the $(J, \nabla J , \nabla^2 J)$ direction by Corollary \ref{c:2prime}, so
\begin{align}
	\left|  P''(0) \right| &\leq C \, (1+|x|^2) |h|_2 \,|J|_2   + C\,(1+|x|^2)|h|_2^2 \, .
\end{align}
Using this in \eqr{e:e7p4} and integrating
  gives 
\begin{align}	\label{e:PUbound0}
	\| P_{U} \|_{L^1} \leq C \, \|  (1+|x|^2) \, |h|_2^2 \|_{L^1} + C \, \|  (1+|x|^6) \, |U|_2^3 \|_{L^1} + C \, \|  (1+|x|^2) \, |J|_2 \, |h|_2 \|_{L^1} \, .
\end{align}	
Using the Cauchy-Schwarz inequality, Lemma \ref{l:Jbound}, $\| J \|_{L^2} \leq \| U \|_{L^2}$ and  \eqr{e:spect} gives
\begin{align}
	 \|  (1+|x|^2) \, |J|_2 \, |h|_2 \|_{L^1} &\leq  \|  (1+|x|^2) \, |J|_2   \|_{L^2} \,  \|    |h|_2 \|_{L^2} \leq C \, \| J \|_{L^2} \, \| h \|_{W^{2,2}} \notag \\
	 &\leq C \, \| U \|_{L^2} \, \left(  \| U \|_{L^2}^2 + \|  \phi_{U} \|_{L^2} \right) =
	 C\,\| U\|_{L^2}^3+C\,\|U\|_{L^2}\,\|\phi_{U}\|_{L^2}
	  \, .
\end{align}
 Using this in \eqr{e:PUbound0}  and applying Lemma \ref{l:revHolder}  gives for each $\kappa \in (0,1]$
 \begin{align}	 
	\| P_{U} \|_{L^1} \leq  C \,  \| U \|_{L^2}^4 + C \, \| \phi_U \|_{W^{1,2}}^2 + C \, \| U \|_{L^2} \, \| \phi_U \|_{L^2} +
	 C_{\kappa} \, \left\{ \| U \|_{L^2}^3 +   \| \phi_{U} \|_{L^2}^{ \frac{6}{3+\kappa} } \right\} \, ,
\end{align}	
where $C_{\kappa}$ depends on $\kappa $ and $N$.
The proposition follows.
 \end{proof}

\section{Uniqueness of blowups}

In this section,   we will prove uniqueness as outlined in subsection \ref{ss:overview}.   
Throughout, 
$\Sigma_t^n \subset \RR^N$ will be a rescaled MCF with entropy $\lambda (\Sigma_t) \leq \lambda_0$ for some fixed $\lambda_0$ (all constants will be allowed to depend on $\lambda_0$).  In addition, fix some $\beta > 0$.

The key  to uniqueness is the following discrete differential inequality for $F(\Sigma_t)$
(see Definition \ref{d:epsilonclose} for the notion of 
$(\epsilon_1 , R_1 , C^{2,\beta})$-close):

 \begin{Thm}	\label{t:discre}
 Given $n$, $N$,  there exist $K , \bar{R} , \epsilon$ and $\alpha \in (1/3, 1)$ so that if  
  $\Sigma_t$ is $(\epsilon , \bar{R} , C^{2,\beta})$-close to some cylinder $\cC_k$
  for $t \in [T-1 , T+1]$, then 
 \begin{align}	\label{e:7p2J}
 	\left| F (\Sigma_T) - F(\cC_k) \right|^{ 1 +\alpha} \leq K \, \left[ F(\Sigma_{T-1}) - F(\Sigma_{T+1}) \right]  \, .
 \end{align}
 \end{Thm}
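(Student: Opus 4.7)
The plan is to derive Theorem \ref{t:discre} by integrating a pointwise-in-time Lojasiewicz inequality of the form
\begin{align}
|F(\Sigma_t) - F(\cC_k)|^{1+\alpha} \leq C\,\|\phi(\cdot,t)\|_{L^2}^2,
\end{align}
and then invoking Huisken monotonicity of $F$ along rescaled MCF together with the identity $-\tfrac{d}{dt}F(\Sigma_t)=\|\phi\|_{L^2}^2=\|\nabla F\|_{L^2}^2$.

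First I would assemble the pointwise Lojasiewicz at each fixed $t\in[T-1,T+1]$. The $(\epsilon,\bar R,C^{2,\beta})$-closeness, together with interior parabolic regularity, produces a compactly supported normal graph $U(\cdot,t)$ over $\cC_k$ on a graphical ball of some radius $r=r(t)$, so that the graph $\Sigma_U$ and $\Sigma_t$ coincide inside $B_r$. Proposition \ref{p:gradLoj} applied to $\Sigma_U$, combined with the triangle inequality and the Gaussian tail outside $B_r$, yields \eqref{e:6p5a}; feeding in the scale and amplitude bounds \eqref{e:t72a}--\eqref{e:t72b} of Theorem \ref{t:shrinkerscale} gives
\begin{align}
|F(\Sigma_t)-F(\cC_k)| \lesssim \|\nabla F\|_{L^2}^{3/2} \,+\, \|\nabla F\|_{L^2}^{1+\beta/2} \,+\, \|\nabla F\|_{L^2}^{1+\beta}.
\end{align}
Hence $|F(\Sigma_t)-F(\cC_k)|\lesssim \|\nabla F\|_{L^2}^{1+\tau}$ with $\tau=\min(1/2,\beta/2)>0$, which after raising to the power $2/(1+\tau)$ becomes
\begin{align}
|F(\Sigma_t)-F(\cC_k)|^{\frac{2}{1+\tau}} \leq C\,\|\phi\|_{L^2}^2 = -\,\frac{d}{dt}F(\Sigma_t).
\end{align}
I then set $1+\alpha=\tfrac{2}{1+\tau}$, which lies in $(1/3,1)$ since $\tau\in(0,1/2]$.

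Second, I would integrate. Write $\Phi(t)=F(\Sigma_t)-F(\cC_k)$, so $\Phi'(t)=-\|\phi\|_{L^2}^2\leq 0$ by Huisken monotonicity and $\Phi$ is nonincreasing. For $t\in[T-1,T]$, monotonicity gives $\Phi(t)\geq\Phi(T)$, so
\begin{align}
|\Phi(T)|^{1+\alpha} \leq \int_{T-1}^{T} |\Phi(t)|^{1+\alpha}\,dt \leq C\int_{T-1}^{T}(-\Phi'(t))\,dt = C\,[\Phi(T-1)-\Phi(T)].
\end{align}
Using $\Phi(T)\geq \Phi(T+1)$ (monotonicity again) on the right, $\Phi(T-1)-\Phi(T)\leq \Phi(T-1)-\Phi(T+1) = F(\Sigma_{T-1})-F(\Sigma_{T+1})$, completing \eqref{e:7p2J} with $K=C$.

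The main obstacle is not this tidy integration step but the uniform verification of the pointwise Lojasiewicz across $t\in[T-1,T+1]$: one has to know that at every such $t$ the hypotheses of Theorem \ref{t:shrinkerscale} (and hence of Proposition \ref{p:Lojae1A}, the improvement estimate built from Theorem \ref{c:kappa} and Proposition \ref{l:PLone}, and the extension theorem of \cite{CM3}) are met with constants independent of $t$. Translating the $(\epsilon,\bar R,C^{2,\beta})$-closeness assumption into a uniform compactly supported normal graph $U(\cdot,t)$, verifying the structural assumptions $(A\star)$ and $(\star_{\epsilon_1,r(t)})$ at each time, and running the extension/improvement iteration uniformly in the interval requires $\bar R$ chosen large and $\epsilon$ small depending on $n,N,\lambda_0,\beta$; the continuity in $t$ then follows from parabolic regularity applied to rescaled MCF.
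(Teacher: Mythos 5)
Your proposal diverges from the paper's proof at the very first step, and the divergence is not cosmetic: you try to establish a \emph{pointwise-in-time} Lojasiewicz inequality $|F(\Sigma_t)-F(\cC_k)|^{1+\alpha}\le C\|\phi(\cdot,t)\|_{L^2}^2$ and then integrate, but the paper's machinery does not produce a pointwise inequality, and the integration step is unnecessary. In the paper, Theorem \ref{t:shrinkerscale} is applied exactly once, at the single time $T$, and its conclusion is already phrased in terms of the shrinker scale $R_T$ defined in \eqr{e:e7p1} by $\e^{-R_T^2/2}=F(\Sigma_{T-1})-F(\Sigma_{T+1})$. Combining Proposition \ref{p:gradLoj} with the bounds of \eqr{e:7point2} gives $|F(\Sigma_T)-F(\cC_k)|\lesssim \e^{-(1+\mu)R_T^2/4}$, and then \eqr{e:7p2J} follows from the algebraic identity $\e^{-(1+\mu)R_T^2/4}=\bigl[F(\Sigma_{T-1})-F(\Sigma_{T+1})\bigr]^{(1+\mu)/2}$ with $1+\alpha=2/(1+\mu)$. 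No time integration is involved.

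The genuine gap in your argument is that you cite \eqr{e:t72a}--\eqr{e:t72b} as the ``scale and amplitude bounds of Theorem \ref{t:shrinkerscale},'' but those displays live in the heuristic overview (Subsection \ref{ss:overview}, where the authors explicitly say they are ignoring constants, logs, and the distinction between pointwise and integrated norms). The rigorous Theorem \ref{t:shrinkerscale} bounds $\|U\|_{L^2}$ and $\|\phi_U\|_{L^2}$ by powers of $\e^{-R_T^2/4}$, not by powers of the instantaneous $\|\phi(\cdot,t)\|_{L^2}$. This is not an accident: the extension step inside Theorem \ref{t:shrinkerscale} rests on theorem $5.3$ of \cite{CM3}, which uses Huisken monotonicity together with Allard/Brakke regularity, and those arguments inherently average the flow over a time interval before concluding that $\Sigma_t$ is graphical on a ball of radius comparable to $R_T$. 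From the value $\|\phi(\cdot,t)\|_{L^2}$ at a single time one cannot conclude the needed graphical radius and amplitude decay, so the pointwise inequality you write down is not available; this is precisely why the paper (and \cite{CM3}) work with the discrete inequality \eqr{e:7p2J} directly. There is also a smaller issue in your integration step: the bound $|\Phi(T)|^{1+\alpha}\le\int_{T-1}^{T}|\Phi(t)|^{1+\alpha}\,dt$ uses the sign information $\Phi\ge0$ together with monotonicity, which is not part of the hypotheses of Theorem \ref{t:discre}; and your exponent book-keeping puts $\tau=1/2$ on the boundary, giving $\alpha=1/3$, which is excluded by the statement. Neither of these is fatal, but they would need repair even if the pointwise Lojasiewicz were available.
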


\vskip1mm
 The  estimate \eqr{e:7p2J} plays the role of  \eqr{e:LjF}.  Iterating \eqr{e:7p2J}  gives a rate of decay that leads to uniqueness.   This requires  that $\Sigma_t$ remains close to a cylinder forward in time, so that  \eqr{e:7p2J}  continues to hold.
 The next theorem will be used both to prove  \eqr{e:7p2J} and to show that  \eqr{e:7p2J}  applies forward in time.  
 This result
 shows that if $\Sigma_t$ is close to a cylinder in a fixed large ball, then it is close  on a scale    larger than 
the   ``shrinker scale'' $R_T$  defined by
\begin{align}	\label{e:e7p1}
	 \e^{ - \frac{R_T^2}{2} }  = F( \Sigma_{T-1}) - F(\Sigma_{T+1})  
	   \, .
\end{align}

\begin{Thm}	\label{t:shrinkerscale}
Given   $\epsilon_0 > 0$, there exist $R_1$, $\mu > 0$ and $\epsilon_1 > 0$ so that if $\Sigma_t$ is   $(\epsilon_1 , R_1 , C^{2,\beta})$-close to a fixed cylinder for $t \in [T-1 , T+1]$, then:
\begin{enumerate}
\item[(A)]    There is a cylinder $\Gamma$ and compactly supported normal vector field $U$ on $\Gamma$   so that $B_{(1+ \mu)R_T} \cap \Sigma_T$ is contained in the graph of $U$ with  $\| U \|_{C^{2,\beta}} \leq \epsilon_0$, 
\begin{align}	\label{e:7point2}
		\| U \|_{L^2}^2      \leq C_n \, \lambda_0 \, R_T^{n-2} \, \e^{ - \frac{  R_T^2}{4} }   {\text{ and }} 
		 \| \phi_U \|_{L^2}^2     \leq \e^{ - (1+\mu)^2 \frac{  R_T^2}{4} } \, .
\end{align}
\item[(B)] For each $\ell$, there exists $C_{\ell}$ so that $\sup_{ B_{(1+ \mu)R_T} \cap \Sigma_t} \,  | \nabla^{\ell} A| \leq C_{\ell}\, (1+|x|)^{\ell + 1}$ for $t \in [T-1/2 , T+1]$.
\end{enumerate}
\end{Thm}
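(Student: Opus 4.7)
The plan is to run an iteration alternating an \emph{extension} step, which enlarges the radius on which $\Sigma_T$ is a small graph over a cylinder, with an \emph{improvement} step, which sharpens the $L^2$ bound on the graph function; the scheme terminates once the graphical scale exceeds $(1+\mu) R_T$. I would first fix the data at time $T$ via Huisken's monotonicity: since $\frac{d}{dt} F(\Sigma_t) = -\| \phi_t \|_{L^2}^2$ for rescaled MCF, the definition \eqr{e:e7p1} of $R_T$ gives
\[ \int_{T-1}^{T+1} \| \phi_t \|_{L^2}^2 \, dt = \e^{-R_T^2/2}. \]
Parabolic regularity on the smooth (close-to-cylinder) portion of the flow would then yield $\| \phi_T \|_{L^2}^2 \lesssim \e^{-R_T^2/2}$, and more generally the derivative bounds asserted in (B) by bootstrapping from the $C^{2,\beta}$-closeness to the cylinder, where $A$ and its derivatives are bounded.

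The \emph{extension step} is the ingredient carried over from \cite{CM3}: if $\Sigma_T$ is $C^{2,\beta}$-close to a cylinder at scale $r$ with $\| U \|_{L^2}^2 \lesssim \e^{-r^2/4}$, then it remains a small $C^{2,\beta}$ graph at scale $(1+\alpha) r$. This step uses only monotonicity and Allard/Brakke estimates and transfers to higher codimension without modification. The \emph{improvement step} is the new input of this paper: given graphicality at scale $r$, I would apply Proposition \ref{p:Lojae1A} to obtain a compactly supported $U$, then Theorem \ref{c:kappa} to estimate $\| \nabla \tau \|_{L^2}^2$ in terms of $\| P \|_{L^1}$, $\| \phi \|_{W^{1,2}}$, and a cutoff error $\e^{-r^2/4}$, and finally Proposition \ref{l:PLone} to bound $\| P \|_{L^1}$ \emph{strictly better} than quadratically in $\| U \|_{L^2}$. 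Feeding these into Proposition \ref{p:gradLoj} and absorbing the super-quadratic term would yield
\[ \| U \|_{L^2}^2 \lesssim \| \phi \|_{L^2} + \e^{-r^2/4} \lesssim \e^{-R_T^2/4} + \e^{-r^2/4}, \]
which is precisely the hypothesis for the next extension step. Iterating from $r_0 = R_1$ with $r_{k+1} = (1+\alpha) r_k$ would reach $r_k \geq (1+\mu) R_T$ after $O(\log R_T)$ steps for any fixed $\mu < \alpha$; taking $\epsilon_1$ small and $R_1$ large would control the accumulated constants through the iteration, and polynomial factors from repeated application and from the cylindrical volume in the annulus at scale $R_T$ would pick up the $R_T^{n-2}$ correction in the final bound on $\| U \|_{L^2}^2$.

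For the $\phi_U$ bound I would use that $\Sigma_U$ coincides with $\Sigma_T$ on $B_{(1+\mu) R_T}$, so $\phi_U = \phi$ there, while the cutoff tail contributes at most $\e^{-(1+\mu)^2 R_T^2/4}$ to the Gaussian-weighted $L^2$ norm; combined with $\| \phi_T \|_{L^2}^2 \lesssim \e^{-R_T^2/2}$ and $(1+\mu)^2 < 2$ this gives the claim. The hardest part is the improvement step, specifically the \emph{super-quadratic} character of the $P$ bound in Proposition \ref{l:PLone}: naive Taylor expansion (via vanishing of $P$ and $\nabla P$ at the cylinder, Lemma \ref{l:Pprime}) gives only $\| P \|_{L^1} \lesssim \| U \|_{L^2}^2$, which is the wrong exponent to absorb. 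The key is that the Hessian of $P$ vanishes on the Jacobi-field directions (Corollary \ref{c:2prime}), upgrading $\| U \|_{L^2}^2$ to $\| h \|_{L^2}^2$ in the second-order term; combined with $\| h \|_{W^{2,2}} \lesssim \| U \|_{L^2}^2 + \| \phi_U \|_{L^2}$ from Proposition \ref{p:gradLoj}, this produces the strictly super-quadratic exponent $\frac{6}{3+\kappa} > 2$ that drives the whole absorption argument and hence the entire iteration.
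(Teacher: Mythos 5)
Your proposal captures the essential structure of the paper's proof: an alternating extension/improvement iteration, with the extension imported from theorem $5.3$ of \cite{CM3} (Allard/Brakke and monotonicity, valid in all codimension) and the improvement driven by the super-quadratic $\| P \|_{L^1}$ bound of Proposition \ref{l:PLone} combined with Theorem \ref{c:kappa}. You also correctly identify the mechanism behind the super-quadratic exponent (vanishing of $\Hess_{\cP}$ on Jacobi fields, Corollary \ref{c:2prime}). However, there is a misattribution in the key step: the conclusion $\| U \|_{L^2}^2 \lesssim \| \phi \|_{L^2} + \e^{-r^2/4}$ does \emph{not} come from Proposition \ref{p:gradLoj}. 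Proposition \ref{p:gradLoj} only bounds $\| h \|_{W^{2,2}}$, the non-Jacobi component; it says nothing about the Jacobi projection $J$, which could a priori be as large as $U$. The actual mechanism in the paper is: after Theorem \ref{c:kappa} and Proposition \ref{l:PLone} give an $L^2$ bound on $\nabla \tau$, one interpolates (using the derivative bounds in (B)) to get \emph{pointwise} smallness of $|\nabla \tau|$ and $|\phi|$ on $B_{s-2}$, and then invokes Proposition \ref{p:Lojae1A}, whose conclusion $(\star_{\epsilon_1,(1-\alpha)R})$ directly contains the improved exponential bound $\| U \|_{L^2}^2 \leq \tfrac12 \e^{-r^2/4}$. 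The Jacobi component of $U$ is eliminated not by Proposition \ref{p:gradLoj} but by Proposition \ref{p:Lojae1A} through a change of cylinder, which is why the improvement step works ``over a possibly different cylinder.'' (Proposition \ref{p:gradLoj} is used in the proof of Theorem \ref{t:discre}, and the $\| h \|_{W^{2,2}}$ estimate it provides is used \emph{inside} the proof of Proposition \ref{l:PLone}, so you are right that it plays a supporting role, but not the one you assign it.) Two smaller points: the paper's iteration gains a net factor $(1+\mu)/(1+\theta)$ per round --- extend by $(1+\mu)$, then improve back down to $s/(1+\theta)$ with $\theta < \mu$ chosen via $(1+\mu)^2 = \tfrac{3+\kappa}{3-\kappa}$ --- rather than a single multiplicative factor as you write; and since the improvement is applied at each fixed time $t$, the cylinder it produces could a priori depend on $t$, so one must also use the pointwise $\phi$-bound to control the time-variation and conclude the cylinder is the same, a step your outline omits.
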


   The closeness to the cylinder is given in (A), while (B) records  higher derivative bounds that will be used for interpolation.
Theorem   \ref{t:shrinkerscale}  will be proven in the last subsection.    

\begin{proof}[Proof of Theorem \ref{t:discre} assuming  Theorem \ref{t:shrinkerscale}]
Theorem \ref{t:shrinkerscale}  gives $\mu > 0$ and a compactly supported normal vector field $U$ on a (possibly different cylinder) $\cC_k'$ where $B_{(1+\mu)R_T} \cap \Sigma_T$ is contained in the graph $\Sigma_U$ of $U$.      Proposition \ref{p:gradLoj} and (A) in Theorem \ref{t:shrinkerscale} give   
 \begin{align}
 	|F(\Sigma_U) - F(\cC_k)| \leq C \, \| U \|_{L^2} \, \| \phi_U \|_{L^2} + C \, \| U \|_{L^2}^3  \leq C \,   \e^{ - \frac{ (1+\mu)   \, R_T^2}{4} }  \, .
 \end{align}
Therefore, using this and the entropy bound, we get that
 \begin{align}
 	\left| F(\Sigma_T) - F(\cC_k) \right| &\leq   |F(U)| + \int_{\Sigma_T \setminus B_{(1+\mu) \, R_T}} \e^{-f}  \leq   C \,   \e^{ - \frac{ (1+\mu)  \, R_T^2}{4} }  = C \, \left[ F(\Sigma_{T-1}) - F(\Sigma_{T+1}) \right] ^{\frac{1+\mu}{2}} \, .
		\notag
 \end{align}
 The theorem follows with $1+\alpha = \frac{2}{1+\mu}$ since $\mu > 0$.
 \end{proof}

 \subsection{Proof of Uniqueness}

 We will use Theorems \ref{t:discre} and  \ref{t:shrinkerscale} to prove uniqueness in this subsection.   Define the sequence $\delta_j \to 0$ by
\begin{align}	\label{e:deltaj}
	\delta_j = \sqrt{F (\Sigma_{j-1}) - F(\Sigma_{j+2})} =  \left( \int_{j-1}^{j+2} \| \phi \|_{L^2}^2 \right)^{ \frac{1}{2} } \, .
\end{align}
Using the Cauchy-Schwarz inequality,   $\delta_j$     bounds the $L^1$ distance from $\Sigma_j$  to $\Sigma_{j+1}$, so  uniqueness is  equivalent to  $\sum \delta_j<\infty$.  We will  show  that $\sum \delta_j^{\bar{\beta}} < \infty$   even
for some $\bar{\beta}<1$.     This
will follow from Theorem \ref{t:discre} and   the next elementary lemma:

\begin{Lem}	\label{c:djsums1}
If  there exists $\alpha \in (1/3, 1)$ and $K$ so that
\begin{align}	\label{e:assumption621}
	\left| F (\Sigma_j) - F(\cC_k) \right|^{ 1 + \alpha} \leq K \, \left[ F(\Sigma_{j-1}) - F(\Sigma_{j+1}) \right] \, , 
\end{align}
then there exists $\bar{\beta} < 1$ so that  
	$\sum_{j=1}^{\infty} \, \delta_j^{ \bar{\beta}} < \infty \, .$
\end{Lem}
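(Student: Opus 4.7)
Set $a_j = F(\Sigma_j) - F(\cC_k)$. Huisken's monotonicity on the rescaled flow makes $F(\Sigma_j)$ nonincreasing in $j$, and since $\Sigma_j \to \cC_k$ we have $a_j \downarrow 0$; in particular $a_j \geq 0$. The hypothesis (\ref{e:assumption621}) becomes $a_j^{1+\alpha} \leq K(a_{j-1} - a_{j+1})$, and by definition $\delta_j^2 = a_{j-1} - a_{j+2}$.

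The first step is a discrete Lojasiewicz-type decay: I claim $a_j \leq C\, j^{-1/\alpha}$. Set $h(j) = a_j^{-\alpha}$ (if $a_j = 0$ for some $j$ the conclusion is immediate by monotonicity). Dichotomize: if $a_{j+1} \leq a_{j-1}/2$ then $h(j+1) \geq 2^{\alpha}\, h(j-1)$, which yields $h(j+1) - h(j-1) \geq (2^{\alpha}-1)\,h(j-1) \to \infty$. Otherwise $a_j \geq a_{j+1} > a_{j-1}/2$, and the hypothesis gives $a_{j-1} - a_{j+1} \geq a_j^{1+\alpha}/K \geq c\, a_{j-1}^{1+\alpha}$; combining this with the elementary bound $h(j+1) - h(j-1) \geq \alpha\, (a_{j-1} - a_{j+1})\, a_{j-1}^{-\alpha-1}$ (from integrating $\alpha t^{-\alpha-1}$ on $[a_{j+1}, a_{j-1}]$) yields $h(j+1) - h(j-1) \geq \alpha c$. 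In either case $h(j+1) - h(j-1) \geq c_0 > 0$ once $j$ is sufficiently large, so $h(j) \geq c_0\, j/2 + O(1)$ and therefore $a_j \leq C\, j^{-1/\alpha}$.

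Next I bound a weighted sum of $\delta_j^2$. Writing $a_{j-1} - a_{j+2} = (a_{j-1} - a_j) + (a_j - a_{j+1}) + (a_{j+1} - a_{j+2})$, reindexing, and then Abel summing,
\begin{align*}
\sum_{j \geq 1} j^s\, \delta_j^2 \;\leq\; C \sum_{k \geq 1} k^s (a_k - a_{k+1}) \;\leq\; C' \sum_{k \geq 1} k^{s-1} a_k \;\leq\; C'' \sum_{k \geq 1} k^{s - 1 - 1/\alpha},
\end{align*}
which is finite for every $s < 1/\alpha$.

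The conclusion follows from a weighted H\"older inequality with conjugate exponents $p = 2/\bar\beta$ and $q = 2/(2-\bar\beta)$: splitting $\delta_j^{\bar\beta} = (j^s\, \delta_j^2)^{\bar\beta/2} \cdot j^{-s\bar\beta/2}$,
\begin{align*}
\sum_{j} \delta_j^{\bar\beta} \;\leq\; \left( \sum_{j} j^s\, \delta_j^2 \right)^{\!\bar\beta/2} \left( \sum_{j} j^{-s\bar\beta/(2-\bar\beta)} \right)^{\!(2-\bar\beta)/2}.
\end{align*}
Both factors are finite provided $(2 - \bar\beta)/\bar\beta < s < 1/\alpha$, i.e., provided $\bar\beta > 2\alpha/(1+\alpha)$. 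Since $\alpha < 1$, we have $2\alpha/(1+\alpha) < 1$, so any $\bar\beta \in (2\alpha/(1+\alpha),\, 1)$ does the job. The main obstacle is the Lojasiewicz decay step, which requires the dichotomy above; once $a_j \leq C j^{-1/\alpha}$ is in hand, the rest is routine H\"older interpolation against the natural polynomial weight.
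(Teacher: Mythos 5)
Your proof is correct and follows essentially the same route as the paper: establish a polynomial decay rate for $a_j$, deduce finiteness of the weighted $\ell^2$ sum $\sum j^s\delta_j^2$ by summation by parts, and conclude by H\"older. The only difference is that the paper simply invokes lemma~6.9 of \cite{CM3} for the decay $\sum_{i\geq j}\delta_i^2 \leq C j^{-\rho}$ with $\rho>1$, whereas you reprove that step from scratch via the standard discrete-\L{}ojasiewicz dichotomy (either $a_{j+1}\leq a_{j-1}/2$ or not); your rate $\rho=1/\alpha$ and your threshold $\bar\beta > 2\alpha/(1+\alpha)$ agree with what the cited lemma gives. Making the decay step explicit is a harmless, slightly more self-contained choice; it buys nothing new but does make visible that only $\alpha<1$ is actually needed here (the lower bound $\alpha>1/3$ in the hypothesis is inherited from Theorem~\ref{t:discre} and plays no role in this lemma).
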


\begin{proof} 
By \eqr{e:assumption621}, we can apply lemma $6.9$ in \cite{CM3} to get $\rho > 1$ and $C$ so that
\begin{align}	\label{e:was713}
	\sum_{i=j}^{\infty} \delta_i^2 \leq 3 \left( F(\Sigma_{i-1}) - \lim_{t\to \infty} F(\Sigma_t) \right) \leq C \, j^{ - \rho }  \, .
\end{align}
Moreover, lemma $6.9$ in \cite{CM3} shows that \eqr{e:was713}  implies that $\sum \delta_j < \infty$.  

We will show next that if $0 < q < \rho $, then
\begin{align}	\label{e:claimq}
	\sum \delta_j^2 \, j^q < \infty \, .
\end{align}
 To prove \eqr{e:claimq},  set $b_j = j^q$ and $a_j = \sum_{i=j}^{\infty} \delta_i^2$, then $a_j - a_{j+1} = \delta_j^2$ and 
\begin{align}
	  b_{j+1} - b_j =  (j+1)^q - j^q \leq c \, j^{q-1} \, ,
\end{align}
where $c$ depends on $q$ and we  used that $j\geq 1$.  Summation by parts  and \eqr{e:was713} give 
\begin{align}
	\sum_{j=k}^N \delta_j^2 j^q &= \sum_{j=k}^N b_j (a_j - a_{j+1}) = b_k a_k - b_{N} a_{N+1} + 
	\sum_{j=k}^{N-1} a_{j+1} ( b_{j+1}- b_j ) \notag \\
	&\leq k^q \, \sum_{j=k}^{\infty} \delta_j^2  + C\, \sum_{j=k}^{\infty} j^{- \rho} \, j^{q-1} \, .
\end{align}
This is bounded independently of $N$ since $q < \rho $, giving \eqr{e:claimq}.

Let $a  , \beta  $ be   constants to be chosen below.  The H\"older inequality gives    
\begin{align}	\label{e:twosums}
	\sum \delta_j^{\beta} = \sum \left( \delta_j^{\beta} \, j^a \right) \, j^{-a}  \leq \left(  \sum \delta_j^2 \, j^{ \frac{ 2a}{\beta} }   \right)^{ \frac{\beta}{2} }  \left(  \sum j^{ - \frac{2a}{2-\beta}}  \right)^{ \frac{2-\beta}{2} } \, .
\end{align}
To prove the lemma, we need $\beta < 1$ and $a>0$ so that both sums on the right in \eqr{e:twosums} are finite.  By \eqr{e:claimq}, the first is finite if $\frac{2a}{\beta}< \rho$.  The second is finite if
$  2 -  \beta < 2a$.  To satisfy both, we need
$
	2 -  \beta  < 2a < \rho \,  \beta$.  This is possible as long as $2<(1+ \rho) \, \beta$.  Since $1< \rho$, we can choose such a $\beta < 1$.
 \end{proof}
 
\begin{proof}[Proof of Theorem \ref{t:main}: Uniqueness of Blowups]
Using the  higher order bound (B),   interpolation bounds the $C^{2,\alpha}$ change from $\Sigma_j$ to $\Sigma_{j+1}$: Given $r$ satisfying (B) and any $\beta < 1$, there exists
$C_{r,\beta}$ so that the $C^{2,\alpha}$ variation of $U$ in $B_r \times [j,j+1]$ is at most
\begin{align}	\label{e:distsobad}
	C_{r,\beta} \, \delta_j^{\beta} \, .
\end{align}
By assumption, there is a sequence
  $t_i \to \infty$ so that $\Sigma_{t_i}$ converges with multiplicity one  to a cylinder. By White's Brakke estimate, \cite{W3}, this convergence is smooth on compact subsets. 
In particular, we can guarantee that
 Theorem \ref{t:discre} applies for some $t_i$ (which can be taken large).   Combining \eqr{e:distsobad} 
 and Lemma \ref{c:djsums1}, we get that $\sum_{j=1}^{\infty} \, \delta_j^{ \bar{\beta}} < \infty $ for some $\bar{\beta} < 1$.  
 Note that \eqr{e:distsobad} is used to guarantee that we can continue to apply Theorem \ref{t:discre} for $t > t_i$.
\end{proof}

\subsection{Proof of Theorem    \ref{t:shrinkerscale}}

  Theorem \ref{t:shrinkerscale}   follows  by  an iteration argument using extension and improvement.  The extension  follows from theorem $5.3$ in \cite{CM3} (since it uses Allard/Brakke and monotonicity, which apply in higher codimension).   The improvement, however, has very significant new difficulties and uses all the earlier results from this paper.
    
  We will use  the following elementary lemma in the  proof:

\begin{Lem}	\label{l:simplem}
Given $m$ and a nonnegative integer $k$, there exists $c_{m,k}$ so that  for any $R \geq 1$
\begin{align}
	\int_{\RR^m \setminus B_R} |x|^k \, \e^{ - \frac{|x|^2}{4} } \leq c_{m,k} \, R^{m+k-2} \, \e^{ - \frac{R^2}{4} }  \, .
\end{align}
\end{Lem}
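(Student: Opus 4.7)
The plan is to reduce to a one-dimensional integral by polar coordinates, then iterate integration by parts. Writing $dx = \omega_{m-1} \, r^{m-1} \, dr$ with $\omega_{m-1}$ the surface measure of the unit sphere in $\RR^m$, the claim reduces to showing that for $j = m+k-1$ and $R \geq 1$,
\begin{align*}
	I_j(R) \equiv \int_R^{\infty} r^j \, \e^{-r^2/4} \, dr \leq c_j \, R^{j-1} \, \e^{-R^2/4} \, .
\end{align*}

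I would prove this by induction on $j$. The key identity is $r \, \e^{-r^2/4} = -2 \, \frac{d}{dr} \, \e^{-r^2/4}$, which lets us integrate by parts with $u = r^{j-1}$ and $dv = r \, \e^{-r^2/4} \, dr$ to get
\begin{align*}
	I_j(R) = 2 \, R^{j-1} \, \e^{-R^2/4} + 2(j-1) \, I_{j-2}(R) \, .
\end{align*}
For $R \geq 1$ we have $R^{j-3} \leq R^{j-1}$, so the inductive hypothesis $I_{j-2}(R) \leq c_{j-2} \, R^{j-3} \, \e^{-R^2/4}$ yields $I_j(R) \leq (2 + 2(j-1) c_{j-2}) \, R^{j-1} \, \e^{-R^2/4}$, which closes the induction.

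For the base cases I would handle $j=1$ directly, noting $I_1(R) = 2 \, \e^{-R^2/4}$, which gives the required bound since $R^{1-1} = 1$. For $j=0$, the standard Gaussian tail bound $I_0(R) \leq \frac{2}{R} \, \e^{-R^2/4}$ (for $R \geq 1$) follows from the same integration-by-parts trick after writing $\e^{-r^2/4} = \frac{2}{r} \cdot \frac{r}{2} \, \e^{-r^2/4}$ and a single integration by parts, bounding the leftover positive integral by itself and absorbing. There is no real obstacle here; the only mild care is choosing the right pairing so that the boundary term produces the desired power of $R$, and using $R \geq 1$ to absorb lower powers of $R$ into higher ones at each step of the induction. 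Setting $c_{m,k} = \omega_{m-1} \, c_{m+k-1}$ completes the proof.
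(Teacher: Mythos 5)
Your proof is correct and follows essentially the same route as the paper: after reducing to the one-dimensional integral $\int_R^\infty r^j e^{-r^2/4}\,dr$, the integration by parts you use is exactly the recursion the paper obtains by integrating $2(r^p e^{-r^2/4})' = 2p\,r^{p-1}e^{-r^2/4} - r^{p+1}e^{-r^2/4}$, and both arguments then induct down by two to the base cases $j=0,1$. One cosmetic remark: in your $j=0$ base case the leftover integral after the integration by parts enters with a negative sign, so it can simply be dropped rather than ``bounded by itself and absorbed.''
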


\begin{proof}
Set  $   	\gamma_q (R) = \int_R^{\infty} r^q \, \e^{ - \frac{r^2}{4} } \, dr $.
Since $2\, \left( r^p \,  \e^{ - \frac{r^2}{4} } \right)' = 2p \, r^{p-1}  \,  \e^{ - \frac{r^2}{4} } - r^{p+1} \,  \,  \e^{ - \frac{r^2}{4} }$, integrating from $R$ to $\infty$ gives
\begin{align}	\label{e:gammasR}
	2 \, R^p \,  \,  \e^{ - \frac{R^2}{4} } =  -2 p \, \gamma_{p-1} (R) + \gamma_{p+1} (R) \, .
\end{align}
Applying this with $p=0$ gives $\gamma_1 (R) = 2 \,    \e^{ - \frac{R^2}{4} }$; using $p=-1$ gives $\gamma_0 (R) \leq \frac{2}{R} \,   \e^{ - \frac{R^2}{4} }$.
 Using \eqr{e:gammasR}   inductively in $p$, we get for every nonnegative integer $q$ that 
$
	\gamma_q (R) \leq c_q \, R^{q-1} \, \e^{ - \frac{R^2}{4} } \, .
$
The lemma follows from this.
\end{proof}

\begin{proof}[Proof of Theorem \ref{t:shrinkerscale}]  
The inductive argument has two components: (1) a priori estimates over the same cylinder on a larger scale and (2) an improvement over a possibly different cylinder.   


Following section $5$ in \cite{CM3} (see ($1$) in the proof of theorem $5.3$), we have{\footnote{This follows as in \cite{CM3} since the results used - White's Brakke estimates, \cite{W1},   Huisken's monotonicity, \cite{H2}, and   higher derivative bounds (lemma $3$ in \cite{AB}) - hold in all codimension.}}:
\begin{enumerate}
\item Given $\epsilon_2 > 0$, there exist $\epsilon_3 > 0$,  $R_2$, $C_2$  and $\mu > 0$  so that if  $R \in [R_2 , R_T]$ and   $B_R \cap \Sigma_t$ is  the graph of $U(x,t)$ over a fixed cylinder $\cC_k$ with $\| U \|_{C^{2,\beta}(B_R)} \leq  \epsilon_3$ for $t \in [T- \frac{1}{2}, T+1]$, then for $t \in [T- \frac{1}{2}, T+1]$:
\begin{itemize}
	\item[(A1)] $B_{(1+\mu)R} \cap \Sigma_t$ is contained in the graph of $U$ with  $\| U  \|_{C^{2,\beta}(B_{(1+\mu)R})} \leq \epsilon_2$.
	\item[(A2)] $\| \phi \|_{L^2(B_{(1+\mu)R}\cap \Sigma_t)}^2 \leq C_2 \, \e^{ - \frac{R_T^2}{2}}$.
	\item[(A3)]  $|\nabla^{\ell} A | \leq C_{\ell} \, (1 + |x|)^{\ell+1}$ on $ B_{(1+\mu)R}\cap \Sigma_t$  for each $\ell$.
	\end{itemize}
 \end{enumerate}
 \vskip2mm
 
 Observe that if $\| U \|_{L^2(B_R)}^2 \leq \e^{ - \frac{R^2}{4}}$, then Lemma \ref{l:simplem}
gives that
\begin{align}	\label{e:whatUL2has}
	\| U \|_{L^2(B_{(1+\mu)R})}^2 \leq   C_n \, \lambda_0 \, R^{n-2} \, \e^{ - \frac{R^2}{4}}  \, .
\end{align}
\vskip1mm
Choose $\epsilon_2 > 0$ so that Proposition  \ref{p:Lojae1A} and Proposition \ref{l:PLone}  apply; this fixes $\epsilon_3 > 0$, $C_2$, $\mu > 0$ and $R_2$.  The iterative hypotheses on scale $s$ will be that for $t \in [T- \frac{1}{2}, T+1]$:
\begin{itemize}
\item[($\star 1$)] $\| \phi \|_{L^2(B_s \cap \Sigma_t)}^2 \leq C_2 \, \e^{ - \frac{R_T^2}{2}}$, \, $B_s \cap \Sigma_t$ is contained in the graph of $U$ with $\| U \|_{C^{2,\beta}(B_s)} \leq \epsilon_2$,  and 
	$\| U \|_{L^2(B_s)}^2 \leq C_n \, \lambda_0 \, s^{n-2} \, \e^{ - \frac{s^2}{4(1+\mu)^2}}$.
\end{itemize}

\vskip2mm
Step (2) below will give the improvement needed to apply (1) again.  Each time that (1) is applied, the scale increases from $R$ to $s=(1+\mu)R$ but the a priori bound   gets worse.  The   bound that comes out of (1) will be good enough to apply   (2) on the scale $s$ to get improved bounds (good enough to apply (1) again) on the smaller scale $\frac{s}{1+\theta}$.   What makes it  work is that $\theta < \mu$ so that the scale keeps increasing by a definite amount.

\begin{enumerate}
 
\item[(2)]  Given $\epsilon_1 > 0$, there exists $\theta \in (0, \mu)$ and $\bar{R}$ so that if ($\star 1$) holds for some $s \in [\bar{R} , R_T]$  and all
$t \in [T- \frac{1}{2}, T+1]$, then 
there is (possibly different) cylinder so that each $B_{\frac{s}{1+\theta}} \cap \Sigma_t$ is a graph of $U$ with $\| U \|_{C^{2,\beta}} \leq \epsilon_1$ and 
\begin{align}
	\| U \|^2_{ L^2 (B_{ \frac{s}{1+\theta}})} \leq \e^{ -  \frac{ s^2}{4 \, (1+\theta)^2}} \, .
\end{align}

\end{enumerate}

\vskip1mm
The theorem follows  by applying (1) and (2) repeatedly.
It remains to prove (2).

\vskip1mm
\noindent
{\emph{Proof of (2)}}:   
 Proposition \ref{l:PLone}    gives
 $C_0$ and $\bar{\epsilon}>0$ so that if $\epsilon_2 \leq \bar{\epsilon}$, then 
 for any $\kappa \in (0,1]$ there is a constant $C_{\kappa}  $ so that (choosing $\kappa= \kappa(\alpha)$)
        \begin{align}	\label{e:e723}
        		\| P_{U} \|_{L^1} &\leq C_{\kappa} \, \| U \|_{L^2}^3  + C_{\kappa} \, \| \phi_U \|_{L^2}^{ \frac{6}{3+\kappa}}
		+ C \, \| U \|_{L^2} \,  \|  \phi_{U} \|_{L^2}  +  C \,  \|  \phi_{U} \|_{W^{1,2}}^2	\notag \\
		&\leq C_{\kappa} \, \left\{  \| U \|_{L^2}^{ \frac{6 }{3-\kappa}}    + \| \phi_U \|_{W^{1,2}}^{ \frac{6}{3+\kappa}}	\right\} \, .
\end{align}
Here the last inequality used Young's inequality on $ \| U \|_{L^2} \,  \|  \phi_{U} \|_{L^2}$.
We can now apply Theorem \ref{c:kappa} with a cutoff function $\psi$ supported 
in $B_s$ to get
\begin{align}	\label{e:ckappa2}
	\int  \psi^2 \,   |\nabla \tau|^2  \,  \e^{-f} & \leq C \, \| P_U \|_{L^1}  + C \, \int  |\nabla \psi |^2 \, \e^{-f}   + C\, \| \phi_U \|_{W^{2,1}} \notag \\
	&\qquad+ C\,  \| \nabla^{\perp} \, \phi_U \|_{L^2}^2 + C \, \| |\nabla^{\perp} \phi_U | \, |A(x^T, \cdot)| \|_{L^1}  \, .  
\end{align}
Using
the identity $\nabla_{E_i}^{\perp} \bH = 
	- \frac{1}{2} A(x^T , E_i ) - \nabla_{E_i}^{\perp} \phi $ from Lemma \ref{l:gradbH} on the last term and 
choosing a linear cutoff $\psi$ that is one on $B_{s-1}$ and using the bound  \eqr{e:e723} gives
\begin{align}	\label{e:ckappa3a}
	 \| \nabla \tau \|^2_{L^2(B_{s-1})}  \leq C_{\kappa} \, \left\{  \| U \|_{L^2}^{ \frac{6}{3-\kappa}}    + \| \phi_U \|_{W^{1,2}}^{ \frac{6}{3+\kappa}}	\right\}  + C\, \| \phi_U \|_{W^{2,1}}  + C \, s^{n-2} \, \e^{ - \frac{s^2}{4}}  \, .  
\end{align}
Next, for any $\bar{\beta} < 1$, \eqr{e:ckappa3a} and  interpolation gives that for $\| \phi_U \|_{L^2}$ sufficiently small
 \begin{align}	\label{e:ckappa3}
	 \| \nabla \tau \|^2_{L^2(B_{s-1})}  \leq C_{\kappa} \,   \| U \|_{L^2}^{ \frac{6}{3-\kappa}}    + C_{\kappa} \, \| \phi_U \|_{L^{2}}^{ \frac{6\, \bar{\beta}}{3+\kappa}}	  +   \| \phi_U \|_{L^1}^{\bar{\beta}} + C \, s^{n-2} \, \e^{ - \frac{s^2}{4}}   \, .  
\end{align}
 Since $\phi$ and $\phi_U$ only differ outside of $B_s$,   Lemma \ref{l:simplem}  and $\| \phi \|_{L^2(B_s)}^2 \leq C_2 \, \e^{ - \frac{R_T^2}{2} }$ give
\begin{align}	\label{e:phiUbd}
	\| \phi_U \|^2_{L^2} &\leq      C  \, \e^{ - \frac{R_T^2}{2} } + C \, s^{n} \, \e^{ - \frac{s^2}{4} } \leq  C \, s^{n} \, \e^{ - \frac{s^2}{4} }   \, , \\
	\| \phi_U \|_{L^1} &\leq   C  \, \e^{ - \frac{R_T^2}{4} } + C \, s^{n-1} \, \e^{ - \frac{s^2}{4} }\leq  C \, s^{n-1} \, \e^{ - \frac{s^2}{4} } 
	\, .   \label{e:phiUbdL1}
\end{align}
Using \eqr{e:phiUbd} and \eqr{e:phiUbdL1} in \eqr{e:ckappa3}  gives
  \begin{align}	\label{e:ckappa4}
	 \| \nabla \tau \|^2_{L^2(B_{s-1})}  &\leq C_{\kappa} \,   \| U \|_{L^2}^{ \frac{6}{3-\kappa}}    + C_{\kappa} \,   s^{n} \, \e^{ - \left( \frac{3\bar{\beta}}{3+\kappa} \right) \, \frac{s^2}{4}} \, . 
\end{align}

Given any $\beta < 1$,
applying interpolation to \eqr{e:phiUbdL1} and \eqr{e:ckappa4} gives $C_{\beta,\kappa}$ so that on $B_{s-2}$
\begin{align}	\label{e:phiboundsnow}
	\left( |\nabla \tau|^2 + |\nabla^2 \tau|^2 +  |\phi| + |\nabla \phi|  + |\nabla^2 \phi |   \right) \, \e^{ \frac{-|x|^2}{4} } \leq C_{\beta,\kappa} \,  \e^{ - \left( \frac{3\bar{\beta}}{3+\kappa} \right)\frac{\beta\,s^2}{4} }
	+  C_{\beta,\kappa} \,  \, \| U \|_{L^2}^{ \frac{6 \,\beta}{3-\kappa}} \,.
\end{align}
Using  $\| U \|_{L^2(B_s)}^2 \leq C_n \, \lambda_0 \, s^{n-2} \, \e^{ - \frac{s^2}{4(1+\mu)^2}}$ from ($\star 1$), we can bound \eqr{e:phiboundsnow} by
\begin{align}	\label{e:phiboundsnow2}
	\leq C_{\beta,\kappa} \,  \e^{ - \left( \frac{3\bar{\beta}}{3+\kappa} \right)\frac{\beta\,s^2}{4} }
	+  C_{\beta,\kappa} \,  \e^{ - \frac{3\, \beta}{3-\kappa} \, \frac{s^2}{4(1+\mu)^2} } \, .
\end{align}
Now, we choose $\kappa$ so that $(1+ \mu)^2 = \frac{3+\kappa }{3-\kappa}$ and, thus, 
\begin{align}
	\frac{3}{(3-\kappa)(1+\mu)^2} = \frac{3}{3+\kappa}  > \frac{3-\kappa}{3+\kappa} = \frac{1}{(1+\mu)^2} \, .
\end{align}
This gives some $\theta < \mu$ so that we can apply Proposition  \ref{p:Lojae1A} on the scale $\frac{s}{1+\theta}$.  In particular, 
 as long as $\bar{R}$ is large enough, 
 Proposition  \ref{p:Lojae1A}   shows that each $\Sigma_t$ is the graph over a cylinder, which may depend on $t$, satisfying the desired  bounds.  The bounds on 
  $\phi$ in \eqr{e:phiboundsnow} control the change in  the flow over time, so the cylinder does not depend on $t$.
\end{proof}

\end{document}